\newtheorem{theorem}{Theorem}[section]
\newtheorem{lemma}[theorem]{Lemma}
\newtheorem{example}[theorem]{Example}
\newtheorem{proposition}[theorem]{Proposition}
\newtheorem{remark}[theorem]{Remark}
\newtheorem{hypothesis}{Hypothesis}
\newcommand{\figcaption}{\def\@captype{figure}\caption}
\newcommand{\tabcaption}{\def\@captype{table}\caption}
\def\ga{{\gamma}}
\def\om{{\omega}}
\let\emptyset\varnothing
\def\by{\mathbf{y}}
\def\bz{\mathbf{z}}
\def\bx{\mathbf{x}}
\def\bt{\mathbf{t}}
\def\bs{\mathbf{s}}
\def\br{\mathbf{r}}
\font\tenmsbm=msbm10\textfont
\font\sevenmsbm=msbm7
\def\EE{\mathbb E}\def\PP{\mathbb P}\def\NN{\mathbb N}
\def\RR{\mathbb R}
\def\PP{\mathbb P}\def\sP{{\mathscr P}}
\def\cK{{\cal K}}
\def\cF{{\cal F}}\def\cH{{\cal H}}
\def\cI{{\cal I}}\def\cM{{\cal M}}\def\cJ{{\cal J}}
\def\cS{{\cal S}}
\def\1{\mathbf 1}
\def\<{\left<}\def\>{\right>}
\def\({\left(}\def\){\right)}
\providecommand{\keywords}[1]
{
  \textbf{\textit{Keywords:\ }} #1
}
\numberwithin{equation}{section}
\begin{document}

\title{On mean-field super-Brownian motions}

\author[a]{Yaozhong Hu \thanks{Supported by an NSERC Discovery grant and a startup fund from University of Alberta at Edmonton. Email: yaozhong@ualberta.ca}}
\author[a]{Michael A. Kouritzin \thanks{Supported by an NSERC Discovery grant. Email: michaelk@ualberta.ca}}
\author[b]{Panqiu Xia\thanks{Email: px@math.ku.dk}}
\author[c]{Jiayu Zheng \thanks{Supported by NSFC grant 11901598.  Email: jyzheng@smbu.edu.cn}}
\affil[a]{Department of Mathematical and Statistical Sciences, University of Alberta, Edmonton, AB, T6G 2G1, Canada}
\affil[b]{Department of Mathematical Sciences,
University of Copenhagen, Copenhagen \O, 2100, Denmark}
\affil[c]{Faculty of Computational Mathematics and Cybernetics, Shenzhen MSU-BIT University, Shenzhen, Guangdong, 518172, China}

\date{}

\maketitle

\begin{abstract}
The mean-field stochastic partial differential equation (SPDE) corresponding to a mean-field super-Brownian motion (sBm) is obtained and studied. 
In this mean-field sBm,  the branching-particle lifetime is allowed to depend upon the   probability  distribution of the sBm itself, producing an SPDE whose space-time 
white noise coefficient has, in addition to the typical sBm square root,  
an extra factor that is a function of the  probability law  of the density of the 
mean-field sBm.
This novel mean-field SPDE is thus motivated by population models where 
things like overcrowding and isolation can affect growth.
A two step approximation method is employed to show the existence for this SPDE
under general conditions. 
Then, mild moment conditions are imposed to get uniqueness. 
Finally, smoothness of the SPDE solution is established under a further simplifying condition.
\end{abstract}

\keywords{Super-Brownian motion, mean-field stochastic partial differential equation, branching particle systems, moment formula, moment conditions, moment differentiability.}

\section{Introduction}
The classical mean-field theory was widely used in statistical mechanics to study e.g. the derivation of Boltzmann or Vlasov equations in the kinetic gas theory.  This theory has also been applied in quantum mechanics, quantum chemistry and so forth. 
In the late 2000's, Larsy and Lions (see \cite{jjm-07-lasry-lions} and references therein) generalized this theory to approximate the Nash equilibrium with a large number of players that can be described as a system of exchangeable stochastic differential equations (SDEs). 
Moreover, there is a series of more recent papers focusing on mean-field  
backward SDEs (see \cite{amo-11-andersson-djehiche,ap-09-buckdahn-djehiche-li-peng,ap-17-buckdahn-li-peng-rainer} etc.). 
Still, it is natural to extend the mean-field theory for SDEs to  infinite dimension 
and, in particular, consider mean-field stochastic partial differential equations 
(SPDEs).
Yet, hitherto little has been done. 
To the authors' best knowledge, there are only a very limited number of works 
on mean-field SPDEs (see \cite{jota-18-dumitrescu-oksendal-sulem,cam-19-tang-meng-wang}). 

The mean-field SPDE we study arises from a 
Dawson-Watanabe-style, high-density branching-process limit with some naturally 
modified branching mechanism.  
Suppose that there is a population of particles, each performing Brownian motions 
on $\mathbb R$, with exponentially distributed lifetimes.  
At the end of each individual particle's life, it gives birth to a number of offsprings according to the Dawson-Watanabe branching mechanism.  
Then, it is well-known (c.f. Perkins \cite{springer-02-perkins})  that the empirical measures $X_t^n$ of this  Dawson-Watanabe branching particle system converges to 
the super-Brownian motion (sBm), described by a measure-valued stochastic partial differential equation.  
Furthermore, the one dimensional sBm $X_t$, 
considered as an $\cM_F(\RR)$-valued process, has a Lebesgue density $X(t,x)$ 
for all $t\in \RR_+ $ almost surely. 
Thus from the random field point of view, one can write (c.f. Xiong \cite{ws-13-xiong})  the sBm as the unique weak random field solution to the following SPDE
\[
\frac{\partial}{\partial t}X_t(x)=\frac{1}{2}\Delta X_t(x)+ \sqrt{\gamma X_t(x)}\dot{W}(t,x)\,,
\]
where $\gamma>0$ is the branching rate, $\Delta$ denotes the Laplacian operator in space and $\dot W=
\frac{\partial ^2}{\partial t\partial x}W$ is the space-time white noise on $\RR_+\times \RR$
(i.e. $W$ is the Brownian sheet).

Suppose now in the finite particle prelimit that each individual branching particle's lifetime is affected
by the entire population (perhaps through overcrowding or isolation) 
so it dies and branches accordingly.     
Then, in the limit, the branching rate $\ga$ (or more precisely the particle lifetime) depends upon 
the probability distribution of the population. 
This leads us to consider the 
following mean-field sBm, whose Lebesgue density $X(t,x)$ satisfies the following SPDE
\begin{align}\label{sbm}
\frac{\partial}{\partial t} X_t(x) =\frac{1}{2} \Delta X_t(x) + \sigma\big(t,x,\PP_{X_t(x)}\big)\sqrt{X_t(x)} \dot{W}(t, x),
\end{align}
where $\PP_{X_t(x)}$ is the probability law of  the real valued random variable $X_t(x)$.

Superprocesses or branching processes have been widely applied in natural sciences. The use of branching processes to approximate large-scale networks is one of those successful examples (c.f. \cite{bath-14-eckhoff,cam-16-hofstad}), while large-scale networks are employed to model real-world problems such as the spread of diseases (c.f. Strogatz \cite{nature-01-strogatz}), the evolution of complex biochemical reaction systems (c.f. \cite{springer-15-anderson-kurtz,springer-19-feinberg}), and so on. This approximation is based on the fact that the global structures of a complex network are determined by their local properties and the network behaves locally like a tree structure (see Eckhoff \cite{bath-14-eckhoff}). A typical example is that the homogeneous Erd\H{o}s-R\'{e}nyi random graph can be 
adequately approximated by the  Poisson-Galton–Watson process (c.f. Van Der Hofstad \cite{cam-16-hofstad}).  Additionally, if one takes the spatial movement into consideration, the associated branching particle system could be a good replacement of the branching process in the approximation of corresponding networks with spatial motion. In fact, the sBm can be understood as a type of scaling limit of the reaction network $\{S\to 2S, S\to \emptyset\}$ with the same reaction rates and where the molecules of species $S$ move as independent Brownian motions (c.f. \cite[etc.]{ap-94-blount,aap-15-pfaffelhuber-popovic,arxiv-20-popovic-veber} for other types of scaling of reaction networks with spatial motions). Assuming the reaction rates depend also on the distribution of species in the system, the corresponding scaling limit should satisfy a mean-field sBm of the form \eqref{sbm}. Other than the sBm, the scaled $\Lambda$-Fleming-Viot branching system converges to a stochastic Fisher-KPP equation (c.f. \cite{ejp-10-barton-etheridge-veber,ejp-20-etheridge-veber-yu}), which describes the population evolution of competing species. We are interested in knowing whether the techniques used in this paper could be potentially applied to derive a mean-field stochastic Fisher-KPP equation or other SPDEs arising from the large-scale networks with distribution dependent coefficients and
consequently to establish the existence, uniqueness and regularity results of the solution.

 On the other hand, one may also obtain this equation \eqref{sbm} from the average of weakly interacting sBm's (cf. Overbeck \cite{ap-96-overbeck}). Let $\mathbf{X}^N=(X^1,\dots, X^N)$ be an $N$-type sBm's that is the solution to the martingale problem: for any $\phi=\phi^1\otimes \dots \otimes \phi^N$ with $\phi^i\in \cS(\RR)$ the Schwartz space of functions on $\RR$ for all $i=1,\dots, N$, the process
\[
\mathbf{M}^N_t(\phi)=\mathbf{X}^N_t(\phi)-\mathbf{X}^N_0(\phi)-\int_0^t\mathbf{X}^N_s(\Delta \phi)ds
\]
 is a continuous square integrable martingale with quadratic variation
 \[
\langle\mathbf{M}^N(\phi)\rangle_t=\sum_{j=1}^N\int_0^t\int_{\RR}\sigma\Big(s,x,\frac{1}{N}\sum_{i=1}^N\delta_{X^i_s(x)}\Big)^2(\phi^j(x))^2X^j_s(dx)ds.
 \]
Then, as $N\to \infty$,  $\frac{1}{\sqrt{N}}\sum_{i=1}^NX^i$ shall heuristically converges to a random filed satisfying equation \eqref{sbm} with $\sigma (t,x,\mu)=\sigma(t,x,\EE (X_{\mu}))$ where $X_{\mu}$ denotes a random variable of distribution $\mu$. Some related results can be found in e.g. Overbeck \cite{pre-94-overbeck}. We are not going to justify this limit in the present paper.

We shall focus on the existence, uniqueness and regularity of the solution to 
equation \eqref{sbm}. 
The first difficulty that we encounter is that there exists no readily-applicable, 
fully-developed theory 
on the Fokker-Planck-Kolmogorov equation associated with \eqref{sbm}.
So, we cannot follow the approach used in finite dimensional case (c.f. \cite{amo-11-andersson-djehiche,ap-09-buckdahn-djehiche-li-peng,ap-17-buckdahn-li-peng-rainer}) to study the existence and uniqueness of solutions to the associated Fokker-Planck-Kolmogorov equation first, and then to solve the mean field equation. 

Nevertheless, the anticipation of solutions to \eqref{sbm} is well justified.
Due to the appearance of the branching character (the $\sqrt{X_t(x)} $ factor in front of the noise), it is natural to use a branching particle system to approximate this equation.  
Assuming that such approximation is done and some high-density limit exists, 
one presumably obtains that every limit point $X= X_t(  dx,  \om)$ is an $\cM_F(\RR)$-valued Markov process. 
 One should be careful, that this limiting process $X$ is different from the stochastic McKean-Vlasov equation as in Dawson and Vaillancourt \cite{nodea-95-dawson-vaillancourt}.  Indeed, the
noise coefficient in \eqref{sbm} is a function of the probability law $\PP_{X_t}$ of the solution $X_t$ as a finite random measure. In comparison, coefficients in \cite{nodea-95-dawson-vaillancourt} as functions of finite measures depend on the random measure $X_t$ itself.   
Notice that $\PP_{X_t}$ is a probability measure on the   space  of finite measures
$  \cM_F(\RR)$. 
If we want to show that $X_t$ satisfies    equation \eqref{sbm} in certain sense, we need to verify the absolute continuity of $X_t$ with respect to the Lebesgue measure for all $t>0$ almost surely, namely, the existence of $X_t(x,\om)  $ such that
$X_t( dx, \om)=X_t(x,\om) dx$. 
This (random) measure $X_t( dx, \om) $ may or may not have such a Lebesgue density.
The classical methods to check absolute continuity are
 based on the moment duality or Laplace functional and require
an explicit form of the corresponding martingale problem, whereas the 
presentation of the martingale problem for our limit $X$ depends on 
$\sigma(t,x,\PP_{X_t(x)})$, which is not well-defined without the 
absolute continuity of $X_t$. 
This dilemma is one of the main difficulties in studying solutions to \eqref{sbm}.
Further, even if the absolute continuity is established so 
$X_t( dx, \om)=X_t(x,\om) dx$, the law (now as a measure of $\RR$) of $X_t(x)$  is not a continuous functional of $\PP_{X_t(x)}$ with respect to the Wasserstein metric. 
Thus, $\sigma(t,x,\PP_{X_t(x)})$ has some intrinsic singularity with respect to the probability measure $\PP_{X_t(x)}$, which will force us to use non-standard
methods.

To overcome these difficulties in the context of existence, we apply a 
two-step approximation (see e.g.  Ji et al. \cite{arxiv-21-ji-xiong-yang}). 
Let $\sP(\RR_+)$ denote the collection of all Borel probability measures on $\RR_+$  equipped with the weak topology,  and let $\cM(\RR;\sP(\RR_+))$ be the collections of measurable functions on $\RR$ with values in $\sP(\RR_+)$. 
In the first step, we fix $\delta>0$, and prove the existence of the pair $(X^{\delta},Y^{\delta})$ that solves the equation
\begin{align}\label{sbm1}
\begin{cases}
\displaystyle\frac{\partial}{\partial t} X^{\delta}_t(x) =\frac{1}{2} \Delta X^{\delta}_t(x) + \widetilde{\sigma}_{ \delta}(t,x, \PP_{Y^{\delta}_t})\sqrt{X^{\delta}_t(x)} \dot{W}(t, x),\\
\displaystyle Y_t^{\delta}(x)=\int_{\RR}p_{\delta}(x-y)X^{\delta}_t(dy),
\end{cases}
\end{align}
with a non-random initial condition $X_0\in \cM_F(\RR)$, where $p_{\delta}(x)=\frac{1}{\sqrt{2\pi\delta}}e^{-\frac{x^2}{2\delta}}$ denotes the heat kernel,   $\PP_{Y^{\delta}_t}=\PP_{Y^{\delta}_t(\cdot)}$ is understood as an element in $\cM(\RR;\sP(\RR_+))$, and $\widetilde{\sigma}_{ \delta}:\RR_+\times \RR\times \cM(\RR;\sP(\RR_+))\to\RR_+$ is given by
\begin{align}\label{def_tsgm}
\widetilde{\sigma}_{ \delta}(t,x,\Gamma)=\int_{\RR}dy p_{\delta}(x-y)\sigma\big(t,y,\Gamma(y)\big).
\end{align} 
In the next step, we prove the tightness of $\{X^{\delta}\}_{\delta>0}$
and $\{Y^{\delta}\}_{\delta>0}$ in the space $C([0,T]\times \RR;\RR)$  for any $T>0$. 
Then, we can find a random field limit point in distribution as $\delta \downarrow 0$. 
This will prove the existence of equation \eqref{sbm}, once it is shown that $X$ satisfies the equivalent martingale problem (MP): 
for all $\phi\in \cS(\RR)$,
\begin{align}\label{mp1}
M_t(\phi)=\langle X_t, \phi\rangle-\langle X_0,\phi \rangle-\frac{1}{2}\int_0^t\langle X_s,\phi\rangle ds
\end{align}
is a square integrable martingale with quadratic variation 
\begin{align}\label{mp2}
\langle M(\phi) \rangle_t=\int_0^t\int_{\RR}\sigma(s,x,\PP_{X_s(x)})^2\phi(x)^2X_s(dx)ds.
\end{align}

The uniqueness problem for equation \eqref{sbm} is much more involved. 
Overbeck \cite{ap-96-overbeck} appears relevant to this problem. 
However, this work requires  (e.g. \cite[Proposition 3.3]{ap-96-overbeck}) that $\sigma(t,x,\PP_{X_t(x)})$ is differentiable in time and twice differentiable in space, with all derivatives being uniformly bounded.   Suppose that $\sigma$ is differentiable in the third argument in certain sense and consider using the chain rule. Then, applying Overbeck's result, one still needs to define and verify the differentiability of $\PP_{X_t(x)}$ in $x$, which seems challenging without more artificial assumptions.

Instead, motivated by the fact that a distribution is often uniquely determined by its moments, we impose the condition that $\sigma(t,x,\PP_{X_t(x)})$ depends on the moments of $X_t(x)$. 
Firstly, we find an ``almost'' explicit moment formula for  $X_t(x)$ under mild hypotheses that ensures the existence of $X$ assuming $\sigma(s, x, \PP_{X_s(x)})$ is known. 
Using this formula, under the moment conditions, we can show the uniqueness of moments of any solution $X_t(x)$ to \eqref{sbm}. Then, the weak uniqueness of solutions to equation \eqref{sbm} can be proved by studying its (unique) log-Laplace equation. 

After we establish the existence and uniqueness of solutions to \eqref{sbm}, we also study the regularity of the moments of the solution to \eqref{sbm}. As the diffusion coefficient involves a square root that is not Lipschitz, the Picard iteration fails to get a convergent sequence in $L^2(\Omega \times [0,T]\times \RR)$. However, using the Picard iteration for the moments, one may get a convergent sequence in $C([0,T]\times \RR)$. This allows us to get the time and spatial regularity of the moments of the solution to \eqref{sbm}.   On the other hand, the regularity of the moments also implies the differentiability of $\sigma(t,x,\PP_{X_t(x)})$ in both $t$ and $x$. One may obtain the uniqueness by Overbeck's theorem.

Inspired by the regularity of moments,   our result may potentially be extended to higher dimensions in the following way. Let $X_t$ denote  the sBm in $\RR^d$ with $d\geq 2$. Then, $X_t$ does not have an almost sure Lebesgue density (c.f. Dawson and Hochberg 
\cite{ap-dawson-hochberg}), namely $X_t(dx)/dx$ is not a real-valued random field. But we can instead consider $\EE (X_t(dx))/dx$, which is the first moment of the ``density'' of $X_t$ if we formally exchange the order of differentiation and integration by Fubini theorem. 
Recall the fact that in one dimensional case, the density of the sBm's is only $1/2-\epsilon$ H\"{o}lder continuous for any $\epsilon\in (0,1/2)$ in space, but the moment is differentiable. Hence, it is reasonable to expect that in general $\EE (X_t(dx))/dx$, the Radon-Nikodym derivative of the moment of the distribution function of the sBm with respect to the Lebesgue measure, exists as a real-valued (deterministic) function on $\RR^d$, although  $ X_t(dx)/dx$ does not exist as a real-valued random field itself. Suppose now that $\sigma$ depends on the moment of the density of the sBm in terms of $\EE (X_t(dx))/dx$. Then, the corresponding mean-field martingale problem can be well formulated analogously to \eqref{mp1} and \eqref{mp2}. Because of the limitation of space, we only focus on the one-dimensional case in this paper, and leave the problem in higher dimensions for future work.

\section{Main results}
To present the main results of this paper. We shall first introduce (recall) some notation and hypotheses which will be used. 

We denote by $\RR$ the set of real numbers, by $\RR_+$ the set of nonnegative numbers, and by $\NN=\{1,2,\dots\}$ the set of natural numbers. Notation $\cS(\RR^d)$ and $\cS'(\RR^d)$ are used for the space of Schwartz functions and its dual space, respectively, on $\RR^d$ for all $d\in \NN$. Let $\cM_F(\RR)$ be the set of all finite measures on $\RR$, let $\sP(\RR_+)$ be the collection of all Borel probability measures on $\RR_+$  equipped with the weak topology, namely, $\lim_{n\to\infty}\mathbb{P}_n=\mathbb{P}$ in $\sP(\RR_+)$, denoted by $\mathbb{P}_n\Rightarrow \mathbb{P}$, if
\[
\lim_{n\to\infty}\int_{\RR_+}\phi(x)\mathbb{P}_n(dx)=\int_{\RR_+}\phi(x)\mathbb{P}(dx),
\]
for all $\phi\in \cS(\RR)$. We write $\cM(\RR;\sP(\RR_+))$ for the collections of measurable functions on $\RR$ with values in $\sP(\RR_+)$. For any $x\in \RR$, notation $\delta_{x}$ denotes the Dirac delta measure at $x$. We sometimes use $\delta$ for a small positive number, it should not be confused with the delta function $\delta_x$. Finally, we also remark that in the present paper, notation $C$, $c_1$ and $c_2$ are used for nonnegative constants that may vary from line to line.

\begin{hypothesis}\label{hyp_0}
\begin{enumerate}[(i)]
\item $\sigma^2$ is positive and bounded, that is, there exists a positive constant $K_0$ such that 
\[
0< \sigma^2(t,x,\mu)\leq K_0
\]
for all $(t,x,\mu)\in \RR_+\times \RR\times \sP(\RR_+)$.

\item $\sigma^2$ is continuous with respect to all the arguments, in the sense that for any $(t_n,x_n)\to (t,x)\in \RR_+\times \RR$ and $\mu_n\Rightarrow \mu$ in $\sP(\RR_+)$, it follows that
\[
\lim_{n\to \infty}\sigma^2 (t_n,x_n,\mu_n)=\sigma^2(t,x,\mu).
\]
\end{enumerate}
\end{hypothesis}

\begin{hypothesis}\label{hyp_fn}
For any $(t,x,\mu)\in [0,T]\times \RR\times \sP(\RR_+)$,
\[\sigma^2(t,x,\mu) = f(t,x,\EE [X_{\mu}],\EE [X_{\mu}^2],\dots, \EE [X_{\mu}^N])\,, 
\]
where $N\in\NN$, $X_{\mu}$ is a random variable with distribution $\mu$ and $f$ is a continuous function on $[0,T]\times \RR\times \RR_+^N$ that is positive and  bounded. Moreover,  $f$ is assumed to be differentiable in the last $N$ spatial arguments with bounded derivatives.
\end{hypothesis}

In the next hypothesis, we let $N$ in Hypothesis \ref{hyp_fn} to be infinity. Before stating the hypothesis, let us first introduce the following Hilbert space of real sequences. For any $\gamma\in \RR$, the Hilbert space $\cH_{\gamma}$ is a collection of real sequences, namely, $x=(x_i)_{i\in \NN}$ with $x_i\in \RR$ for all $i\in \NN$, equipped with inner product
\begin{align}\label{def_ch}
\langle x, y\rangle_{\cH_{\gamma}} =\sum_{n=1}^{\infty}(n!)^{-2\gamma}x_ny_n,
\end{align}
for all $x=(x_i)_{i\in \NN}$ and $y=(y_i)_{i\in \NN}$.
\begin{hypothesis}\label{hyp_ifn}
Let $\cH=\cH_{\gamma}$ with $\gamma>\frac{3}{2}$, and let $\cH_+=\{x=(x_i)_{i\in \NN}\in \cH:x_i\geq 0,\forall i\geq 1\}$. Then, $\sigma$ can be represented as $\sigma(t,x,\mu)^2=f(t,x,\EE [X_{\mu}],\EE [X_{\mu}^2],\dots )$ for some measurable function $f$ on $[0,T]\times \RR\times\cH_+$ that is positive and bounded. Moreover, $f$ is Lipschitz in $y\in\cH_+$ with  uniform  constant  in $(t,x)\in [0,T]\times \RR$, namely,
\[
\sup_{(t,x)\in [0,T]\times \RR}|f(t,x,y_1)-f(t,x,y_2)|\leq L\|y_1-y_2\|_{\cH},
\] 
for all $y_1,y_2\in \cH_+$ with some constant $L>0$.
\end{hypothesis}

\begin{remark}
\begin{enumerate}[(i)]
\item Hypothesis \ref{hyp_0}, which seems the most general one among Hypotheses \ref{hyp_0}, \ref{hyp_fn} and \ref{hyp_ifn}, ensures the existence of the solution to \eqref{sbm}. To obtain the uniqueness, we need to assume that  $\sigma$ is of a special form as in Hypothesis \ref{hyp_fn} or \ref{hyp_ifn}.

\item   Hypothesis \ref{hyp_0} is  inconsistent with Hypothesis \ref{hyp_fn} or \ref{hyp_ifn}. Consider the simplest example that $\sigma^2(t,x,\mu)=f(\EE [X_{\mu}])$ with $f\in C_c^{\infty}(\RR)$, the space of infinitely differentiable functions with compact support, such that $f(0)\neq f(1)$. Then, Hypothesis \ref{hyp_fn} holds for $\sigma$. For any $n=1,2,\dots$, let $\mu_n$ be the counting measure on $\{0,n\}$ with $\mu_n(0)=1-1/n$ and $\mu_n(n)=1/n$. Then, $\mu_n\Rightarrow\delta_0$ in the weak topology. However, $\lim_{n\to \infty}\sigma^2(t,x,\mu_n)=f(1)\neq f(0)=\sigma(t,x,\delta_0)$. In fact, except for some trivial cases, like $\sigma\equiv 1$, any $\sigma$ that satisfies Hypothesis \ref{hyp_fn} or \ref{hyp_ifn} does not satisfy Hypothesis \ref{hyp_0}.

\item Because of (ii),  the existence result under Hypothesis \ref{hyp_0} can not be transferred to situations satisfying Hypothesis \ref{hyp_fn} or \ref{hyp_ifn}. In Section \ref{sec_extc}, we only prove the existence under Hypothesis \ref{hyp_0}. This proof can be modified to cover cases under other hypotheses (see Remark \ref{rmk_mmt}). 
\end{enumerate}
\end{remark}

Next, we state the last hypothesis about the initial condition $X_0$.
\begin{hypothesis}\label{hyp_x0}
$X_0\in \cM_F(\RR)$ has a bounded density, still denoted by $X_0$, such that $X_0\in H_{1,2}(\RR)$, namely, $\|X_0\|_{1,2}=\|X_0\|_2+\|\nabla X_0\|_2<\infty$.
\end{hypothesis}

Now, we are ready to state the main results of the present paper.
\begin{theorem}\label{thm_exs}
Assume $X_0$ satisfying Hypothesis \ref{hyp_x0}. Then, equation \eqref{sbm} with initial condition  $X_0$ has a weak solution on any time interval $[0,T]$ under one of Hypotheses \ref{hyp_0}, \ref{hyp_fn} and \ref{hyp_ifn}.  Additionally, the solution is unique in distribution under either Hypothesis \ref{hyp_fn} or \ref{hyp_ifn}.
\end{theorem}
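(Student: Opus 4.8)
The plan is to prove the two assertions separately: existence of a weak solution (a solution of the equivalent martingale problem \eqref{mp1}--\eqref{mp2} carried by a stochastic basis with a space-time white noise) under any one of Hypotheses \ref{hyp_0}, \ref{hyp_fn}, \ref{hyp_ifn}, and then uniqueness in distribution under Hypothesis \ref{hyp_fn} or \ref{hyp_ifn}. For \emph{existence} I would run the two-step approximation announced after \eqref{sbm}. \textbf{Step 1} (fixed $\delta>0$, solving \eqref{sbm1}): since $\widetilde{\sigma}_\delta(t,x,\PP_{Y^\delta_t})$ involves $Y^\delta$ only through its \emph{law}, set up a fixed-point problem on the deterministic set of continuous law-valued paths $t\mapsto\Gamma_t\in\cM(\RR;\sP(\RR_+))$. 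For a given $\Gamma$, the coefficient $\widetilde{\sigma}_\delta(t,x,\Gamma_t)$ is, by \eqref{def_tsgm}, a fixed function that is bounded, positive, and smooth in $x$ (convolution with $p_\delta$); hence $\partial_t X_t=\tfrac12\Delta X_t+\widetilde{\sigma}_\delta(t,x,\Gamma_t)\sqrt{X_t}\,\dot W$ is a critical super-Brownian motion in a smooth bounded branching environment and, by classical theory (branching-particle construction for existence, the log-Laplace equation $\partial_t u=\tfrac12\Delta u-\tfrac12\,\widetilde{\sigma}_\delta^2\, u^2$ for uniqueness in law, and the one-dimensional density theorem), has a law-unique solution with an a.s. continuous Lebesgue density $X_t(x)$. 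Mollifying gives $Y_t(x)=\int p_\delta(x-y)X_t(dy)$ and a new path $\Gamma'_t:=\PP_{Y_t(\cdot)}$; I would show $\Gamma\mapsto\Gamma'$ has a fixed point by a Schauder--Tychonoff argument: uniform moment bounds for $X_t$ (first moment from the heat equation, higher moments from the Dynkin recursion with source bounded by $K_0$) and the smoothing by $p_\delta$ give equicontinuity of $x\mapsto Y_t(x)$ and, via Kolmogorov's criterion, of $t\mapsto\Gamma'_t$, so all candidates lie in a fixed compact set, while continuity of the map follows from stability of the martingale problem under $\widetilde{\sigma}_\delta(t,x,\Gamma^n_t)\to\widetilde{\sigma}_\delta(t,x,\Gamma_t)$ (Hypothesis \ref{hyp_0}(ii) plus dominated convergence in \eqref{def_tsgm}).

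\textbf{Step 2} ($\delta\downarrow0$): using the $\delta$-uniform moment and moment-increment bounds — which reproduce the $\tfrac12-\epsilon$ spatial Hölder regularity of the super-Brownian density — prove tightness of $\{X^\delta\}_{\delta>0}$ and $\{Y^\delta\}_{\delta>0}$ in $C([0,T]\times\RR;\RR)$, pass to a joint subsequential limit $(X,Y)$, note that $Y=X$ because $p_\delta\to\delta_0$, and observe that $X$ then automatically has a continuous density with $X_t(dx)=X_t(x)\,dx$. Finally, pass to the limit in the martingale problem satisfied by $(X^\delta,Y^\delta)$: the drift term converges trivially, and for the bracket combine $\PP_{Y^\delta_s(x)}\Rightarrow\PP_{X_s(x)}$ with Hypothesis \ref{hyp_0}(ii) and $\int p_\delta(x-y)\sigma(s,y,\PP_{Y^\delta_s(y)})^2\,dy\to\sigma(s,x,\PP_{X_s(x)})^2$ to recover exactly \eqref{mp2}, so $X$ solves \eqref{mp1}--\eqref{mp2}. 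Under Hypotheses \ref{hyp_fn}/\ref{hyp_ifn} the same scheme applies verbatim once the weak-continuity input is replaced by continuity of the moment functionals $\mu\mapsto\EE[X_\mu^k]$, which is now legitimate because the uniform higher-moment bounds supply the uniform integrability that fails in general.

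For \emph{uniqueness in distribution} under Hypothesis \ref{hyp_fn} (Hypothesis \ref{hyp_ifn} being the case $N=\infty$), let $X$ be any weak solution, set $g(s,y):=\sigma(s,y,\PP_{X_s(y)})^2$ and $m_k(s,y_1,\dots,y_k):=\EE[\prod_{i=1}^k X_s(y_i)]$ with diagonal values $m_k(s,y)$. From \eqref{mp1}--\eqref{mp2}, via localization and the branching moment recursion, derive the closed mild hierarchy in which $m_k$ equals a $k$-fold heat flow of $X_0^{\otimes k}$ plus a source built from $g$ and collapsed-diagonal products of the lower $m_j$, $j<k$. By Hypothesis \ref{hyp_fn}, $g(s,y)=f\big(s,y,m_1(s,y),\dots,m_N(s,y)\big)$ depends only on finitely many diagonal moments, so the hierarchy becomes an autonomous fixed-point system for $(m_k)_{k\ge1}$. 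Using boundedness of $f$ for a priori bounds and its bounded derivatives (resp. the $\cH$-Lipschitz bound of Hypothesis \ref{hyp_ifn}) for the contraction estimate, run a Picard iteration \emph{at the level of moments}, where the non-Lipschitz $\sqrt{\cdot}$ has disappeared, and show it converges in $C([0,T]\times\RR)$ (resp. in $C([0,T]\times\RR;\cH_\gamma)$, the weight $(n!)^{-2\gamma}$ with $\gamma>\tfrac32$ being precisely what dominates the factorial growth of the super-Brownian moments). This pins down all $m_k$, hence $g$, uniquely in terms of $X_0$ alone, i.e. the \emph{same} coefficient $g$ for every weak solution. With $g$ now a fixed bounded positive continuous branching rate, \eqref{sbm} reduces to the martingale problem of a super-Brownian motion with known rate $g$, whose law is determined by the associated log-Laplace equation (uniquely solvable since the quadratic nonlinearity keeps solutions bounded by the data), yielding all finite-dimensional distributions of $X$.

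The main obstacle is the uniqueness half, specifically the closure and unique solvability of the moment hierarchy. Two points need genuine care: first, justifying the branching moment recursion for an abstract martingale-problem solution and the finiteness of all its moments, which calls for a preliminary truncation together with a Gronwall bound exploiting $g\le K_0$; second, in the Hypothesis \ref{hyp_ifn} case, controlling the infinite system, where $\gamma>\tfrac32$ is exactly the condition making the moment sequence an $\cH_\gamma$-valued object and the Picard map a contraction there. On the existence side the only non-routine step is the passage $\delta\downarrow0$ in the mean-field coefficient: the mollifier $p_\delta$ must simultaneously regularize \eqref{sbm1} enough for Step 1 and vanish in the limit while $\PP_{Y^\delta_s(x)}$ converges weakly — the ``intrinsic singularity'' of $\mu\mapsto\sigma(t,x,\mu)$ noted in the introduction is sidestepped because \eqref{mp2} only ever tests $\sigma$ against the weak topology, for which Hypothesis \ref{hyp_0}(ii) (or uniform integrability under Hypotheses \ref{hyp_fn}/\ref{hyp_ifn}) holds.
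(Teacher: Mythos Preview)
Your overall architecture matches the paper: two-step approximation for existence, and a moment-hierarchy argument followed by reduction to a classical log-Laplace equation for uniqueness. The uniqueness half is essentially identical in spirit to Sections~4--5; the paper does exactly what you describe, deriving a closed integral system for $u_n(t,x)=\EE[X_t(x)^n]$ and then proving it has a unique solution via a generalized Gr\"onwall inequality (Propositions~\ref{prop_unq} and~\ref{prop_unqif}), after which $\widehat\sigma$ is frozen and the usual log-Laplace duality finishes the job. The one substantive point you underplay is that the paper does not merely invoke a ``Dynkin recursion'': it develops an \emph{explicit} moment formula (Theorem~\ref{coro_mnt}) indexed by a combinatorial set $\cJ_{n,n'}$, and then proves sharp bounds (Lemma~\ref{lmm_gau}, Proposition~\ref{lmm_bunif}) of the form $\EE[X_t(x)^n]\le c_1c_2^n(n!)^{3/2}$. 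This factorial growth is precisely why $\gamma>\tfrac32$ is needed in Hypothesis~\ref{hyp_ifn}, and the infinite-dimensional contraction in $\cH_\gamma$ relies on those exact constants; your sketch would need to reproduce them.

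Your Step~1 of existence, however, takes a genuinely different route. The paper does \emph{not} use a Schauder--Tychonoff fixed-point on law-valued paths $\Gamma$; instead it constructs $(X^\delta,Y^\delta)$ directly as a high-density limit of a branching particle system whose individual lifetimes already depend on $\PP_{Y^{\delta,n}_t}$ (Section~\ref{sec_appx}). Tightness of $\{X^{\delta,n}\}_n$ in $D(\RR_+,\cM_F(\RR))$ and identification of the limiting martingale problem are handled by standard tools (Lemma~\ref{lmm_tight}), and absolute continuity of $X^\delta_t$ is then obtained via moment duality (Section~\ref{sec_dual}). Your fixed-point approach is plausible and arguably cleaner conceptually, but it carries two burdens the paper avoids: choosing a topology on $\cM(\RR;\sP(\RR_+))$-valued paths in which both compactness and continuity of $\Gamma\mapsto\Gamma'$ hold, and proving stability of the sBm law under pointwise convergence of the branching rate (i.e.\ continuity of the map ``coefficient $\mapsto$ law of solution''). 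The particle approximation sidesteps both by building the self-consistency into the prelimit system, at the cost of a slightly longer construction. Step~2 ($\delta\downarrow 0$) is essentially as you describe and matches Section~\ref{sec_cov}.
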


We organize this paper as follows: In Section \ref{sec_extc}, we prove the existence of the solution to equation \eqref{sbm}. Section \ref{sec_mmtfor} is devoted to a moment formula for any solution to \eqref{sbm} and some related estimates. This formula will be the key to the uniqueness result provided in Section \ref{sec_unq}. In the last Section \ref{sec_mmtreg}, we study the regularity of the moments of the solution.

\section{The existence}\label{sec_extc}
In this section, we prove the existence of the solution to equation \eqref{sbm} by using two-step approximation. The approximating equation \eqref{sbm1} is studied in Sections \ref{sec_appx} and \ref{sec_dual}. This will help us to provide a proof of the existence of solutions to equation \eqref{sbm} in Section \ref{sec_cov}.

\subsection{Branching particle approximation}\label{sec_appx}
Let 
$\cI=\{\alpha=(\alpha_0,\dots,\alpha_N):N\in \NN,\alpha_0\in \NN,\alpha_i\in \{1,2\},1\leq i\leq N\}\}$. 
The set $\cI$ is used to label all possible particles in the system. Thus by definition of $\cI$, we see that each particle is allowed to generate at most $2$ offspring. For any $\alpha=(\alpha_0,\dots, \alpha_N)$, we write $\alpha-1=(\alpha_0,\dots, \alpha_{N-1})$. 
Then, $\alpha-1$ is uniquely determined as the mother of particle $\alpha$
and we can define $\alpha-2$, $\alpha-3$, ... and $\alpha-N=(\alpha_0)$ iteratively. Write $\{B^{\alpha}: \alpha \in  \mathcal{I} \}$ for a family of independent one-dimensional Brownian motions.

Let $n\in \NN$ be a scaling parameter and let $\delta>0$ be a smoothing parameter. Consider a branching particle system on $\RR$ with initial distribution $X^{\delta,n}_0=\frac{1}{n}\sum_{i=1}^{K_n}\delta_{x_i}$ for some $K_n\in N$,   the number of initial particles and $x_i\in \RR$ the initial position of each particle for all $1\leq i\leq K_n$.  Denote by $\xi^{\alpha}_t$ the position of each particle, and by
\[
X_t^{\delta,n} = \frac{1}{n}\sum_{\alpha \sim_n t} \delta_{  \xi^{\alpha}_t },
\]
the empirical measure of the system where the summation over $\alpha \sim_n t$ is among all particles ``alive'' at time $t$  (to be defined later).  
We also associate a smoothing random field $Y^{\delta,n}$ on $\RR_+\times \RR$ given by
\begin{align*}
Y_t^{\delta,n}(x) =\langle X_t^{\delta,n}, p_{\delta}(x-\cdot)\rangle= \int_{\RR} p_{\delta}(x-y)X_t^{\delta,n}(dy).
\end{align*}
The lifetime of each particle $\alpha$ is controlled by an independent 
exponential clock. 
The parameter of each clock is $n\widetilde{\sigma}_{ \delta}^2(t,\xi^{\alpha}_t, \PP_{Y^{\delta, n}_t})$, where $\widetilde{\sigma}_{ \delta}$ is defined as in \eqref{def_tsgm} with some measurable function $\sigma:\RR_+\times \RR\times \sP(\RR_+)\to \RR_+$. This means for any living particle $\alpha$ at time $t\geq 0$ with position $\xi^{\alpha}_t$, the probability that she dies in the time interval $[t,t+\Delta t)$ is 
\[
n\widetilde{\sigma}_{  \delta}^2\big(t,\xi^{\alpha}_t, \PP_{Y^{\delta, n}_t}\big)\Delta t+o(\Delta t).
\]
Each individual leaves behind $0$ or $2$ offspring with equal probability when she dies. $N(\alpha)$ is used for the number of offspring of particle $\alpha$.   Denote by $\beta^{\delta,n}(\alpha)$ and $\zeta^{\delta,n}(\alpha)$ the birth and death time of particle $\alpha$. If particle $\alpha-1$, the parent of particle $\alpha$, does not branch any offspring, namely $N(\alpha-1)=0$, then we write $\beta^{\delta,n}(\alpha)=\zeta^{\delta,n}(\alpha)=\infty$. Moreover, we say $\alpha$ is alive at time $t$, if $\beta^{\delta,n}(\alpha)\leq t<\zeta^{\delta,n}(\alpha)$.

The initial position of each particle inherits her mother's death position, and its motion can be described by $B^{\alpha}$ before she dies. To be more precise, for any $\beta^{\delta,n}(\alpha)\leq t<\zeta^{\delta,n}(\alpha)$, 
\[
\xi^{\alpha}_t=\xi^{\alpha-1}_{\beta^{\delta,n}(\alpha)^-}+B^{\alpha}_t-B^{\alpha}_{\beta^{\delta,n}(\alpha)}.
\]

Let $\widetilde{\sigma}_{ \delta}$ be given as in \eqref{def_tsgm}. It is clear that under  one of Hypotheses \ref{hyp_0}, \ref{hyp_fn} and \ref{hyp_ifn}, $\widetilde{\sigma}_{\delta}$ is also positive and bounded.
By using the classical tightness arguments (c.f. Dawson et al. \cite[Lemmas 2.3 and 2.4]{aihpps-00-dawson-vaillancourt-wang}), one can easily show the following lemma.
\begin{lemma}\label{lmm_tight}
Fix $\delta>0$. 
Assume  Hypothesis \ref{hyp_0}  and $X_0^{\delta,n} \Rightarrow X_0\in \cM_F(\RR)$   as $n\to\infty$. Then,
  \begin{enumerate}[(i)]
    \item $\{X^{\delta,n}; t\ge 0\}_{ n\in \NN}$ is a tight family of processes with sample paths in $D(\RR_+, \cM_{F}(\RR))$ with limit in $C(\RR_+, \cM_{F}(\RR))$.
    \item Let $X^{\delta}$ denote a limit point of $X^{\delta,n}$ and 
    \[
Y^{\delta}_t(x)=\langle X_t^{\delta}, p_{\delta}(x-\cdot)\rangle.
  \]
  Then, $(X^{\delta},Y^{\delta})$ is a solution of the following MP: for all $\phi \in \cS(\RR)$,
  \begin{align}\label{mart1}
  M^{\delta}_t(\phi):= \langle \phi, X^{\delta}_t \rangle - \langle \phi, X^{\delta}_0 \rangle - \frac{1}{2}\int_0^t \langle  \Delta \phi, X^{\delta}_s \rangle ds
  \end{align}
  is a continuous square integrable martingale such that $M_0(\phi)=0$ and 
  \begin{align}\label{mart2}
  \langle M^{\delta}(\phi) \rangle_t = \int_0^tds\int_{\RR}\phi^2(x) \widetilde{\sigma}_{ \delta}^2(s,x, \PP_{Y^{\delta}_s(x)})X^{\delta}_s(dx).
  \end{align}
  \end{enumerate} 
\end{lemma}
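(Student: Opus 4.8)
The plan is to follow the classical Dawson--Watanabe programme for branching-particle limits, adapted to the smoothed mean-field coefficient $\widetilde\sigma_\delta$. Since $\delta$ is fixed throughout, the crucial structural fact is that $\widetilde\sigma_\delta$ is bounded and (under Hypothesis \ref{hyp_0}) continuous in all three arguments; boundedness is what makes the standard moment and tightness estimates go through, and continuity is what lets us pass to the limit in the quadratic-variation term.

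For part (i), I would first write the semimartingale decomposition of $\langle\phi,X^{\delta,n}_t\rangle$ coming from the branching-particle construction: the Brownian motion of each particle produces the $\tfrac12\langle\Delta\phi,X^{\delta,n}_s\rangle$ drift, the branching events produce a martingale $M^{\delta,n}_t(\phi)$, and the compensator of the branching has predictable quadratic variation
\[
\langle M^{\delta,n}(\phi)\rangle_t=\int_0^t\!\!ds\int_{\RR}\phi^2(x)\,\widetilde\sigma_\delta^2\big(s,x,\PP_{Y^{\delta,n}_s(x)}\big)\,X^{\delta,n}_s(dx)+o(1),
\]
the $o(1)$ absorbing the $1/n$ discretization error from the binary-branching variance (offspring number $0$ or $2$ with equal probability has variance $1$, scaled by $1/n^2$ per particle against $n$ particles, times the branching rate $n\widetilde\sigma_\delta^2$). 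Using boundedness of $\widetilde\sigma_\delta^2$ by some constant, together with the non-random convergent initial condition $X^{\delta,n}_0\Rightarrow X_0$, a Gronwall argument gives uniform-in-$n$ bounds on $\EE\sup_{t\le T}\langle 1,X^{\delta,n}_t\rangle^2$ and, for each fixed $\phi\in\cS(\RR)$, on the modulus of continuity of $t\mapsto\langle\phi,X^{\delta,n}_t\rangle$ via the Kolmogorov/Aldous criterion applied to the drift and martingale parts separately. This yields tightness of $\{\langle\phi,X^{\delta,n}\rangle\}_n$ in $D(\RR_+,\RR)$ for each $\phi$; combined with a compact-containment estimate for the total mass, Jakubowski's criterion (or the argument of \cite[Lemmas 2.3 and 2.4]{aihpps-00-dawson-vaillancourt-wang}) upgrades this to tightness of $\{X^{\delta,n}\}_n$ in $D(\RR_+,\cM_F(\RR))$, and the jumps of $\langle\phi,X^{\delta,n}\rangle$ being $O(1/n)$ forces every limit point to have sample paths in $C(\RR_+,\cM_F(\RR))$.

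For part (ii), let $X^\delta$ be a limit point along a subsequence, realized (by Skorokhod) on a common probability space with a.s.\ convergence in $D(\RR_+,\cM_F(\RR))$, and set $Y^\delta_t(x)=\langle X^\delta_t,p_\delta(x-\cdot)\rangle$. Since $p_\delta(x-\cdot)\in\cS(\RR)$, the map $\mu\mapsto\langle\mu,p_\delta(x-\cdot)\rangle$ is continuous on $\cM_F(\RR)$, so $Y^{\delta,n}_t(x)\to Y^\delta_t(x)$ and hence $\PP_{Y^{\delta,n}_t(x)}\Rightarrow\PP_{Y^\delta_t(x)}$ in $\sP(\RR_+)$ for each $(t,x)$; by Hypothesis \ref{hyp_0}(ii) this gives pointwise convergence $\widetilde\sigma_\delta^2(s,x,\PP_{Y^{\delta,n}_s(x)})\to\widetilde\sigma_\delta^2(s,x,\PP_{Y^\delta_s(x)})$, and then bounded convergence passes the quadratic-variation integral \eqref{mart2} to the limit. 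Standard martingale-problem closure arguments (showing $M^{\delta,n}_t(\phi)$ and $(M^{\delta,n}_t(\phi))^2-\langle M^{\delta,n}(\phi)\rangle_t$ are uniformly integrable martingales and passing the defining conditional-expectation identities through the limit) then identify $M^\delta_t(\phi)$ in \eqref{mart1} as a continuous square-integrable martingale with the asserted bracket.

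The main obstacle is the passage to the limit in the nonlinear coefficient $\widetilde\sigma_\delta^2(s,x,\PP_{Y^{\delta,n}_s(x)})$: this term depends on the \emph{law} of the prelimit field, not on the field itself, so one must argue that a.s.\ convergence $Y^{\delta,n}_s(x)\to Y^\delta_s(x)$ (available on the Skorokhod space) transfers to weak convergence of the marginal laws $\PP_{Y^{\delta,n}_s(x)}\Rightarrow\PP_{Y^\delta_s(x)}$ and then invoke the joint continuity in Hypothesis \ref{hyp_0}(ii). The smoothing by $p_\delta$ is exactly what rescues this step — it makes $Y^{\delta,n}$ a genuine continuous function of $x$ with the continuity of $\mu\mapsto Y(x)$ inherited from $p_\delta\in\cS(\RR)$ — and it is the reason the $\delta$-approximation \eqref{sbm1} is introduced in the first place; the delicate point postponed to later sections is that this argument breaks down as $\delta\downarrow0$, which is why the second approximation step is needed. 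Everything else is a routine transcription of the Dawson--Vaillancourt--Wang tightness estimates.
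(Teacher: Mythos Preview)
Your proposal is correct and follows essentially the same route as the paper: tightness in $D(\RR_+,\cM_F(\RR))$ is handled by citing the classical estimates of Dawson--Vaillancourt--Wang (the paper omits the details just as you outline them), and the identification of the limiting quadratic variation proceeds via Skorokhod representation, the pointwise convergence $Y^{\delta,n_k}_t(x)\to Y^\delta_t(x)$ afforded by the fixed smoothing $p_\delta$, the resulting weak convergence of marginal laws, continuity of $\sigma$ from Hypothesis~\ref{hyp_0}(ii), and dominated convergence. Your closing remark on why the $\delta$-smoothing is indispensable at this step is precisely the paper's rationale for the two-step approximation.
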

\begin{proof}
Inasmuch as the proof of property (i) is quite standard, we omit it for the sake of brevity. 
It also follows by standard arguments that $M^{\delta}_t(\phi)$, given by \eqref{mart1}, is a continuous square integrable martingale. 
It remains to prove its quadratic variation satisfies equation \eqref{mart2}.
Let $\{X^{\delta,n_k}\}_{k\in \NN}$ be a subsequence of $\{X^{\delta,n}\}_{n\in \NN}$ with limit $X^{\delta}$ in $D(\RR_+, \cM_{F}(\RR))$. By Skorohod representation theorem, we assume this convergence is almost surely.  Then, by classical tightness arguments (c.f. Wang \cite[Corollary 7.3]{saa-98-wang}), we know that
\begin{align*}
\langle M^{\delta}(\phi) \rangle_t=&\lim_{k\to\infty}\langle M^{\delta,n_k}(\phi) \rangle_t,
\end{align*}
where $M^{\delta,n_k}$ is a $\cS'(\RR)$-valued martingale given by
\[
M^{\delta,n_k}_t(\phi)=\frac{1}{n_k}\sum_{\zeta^{\delta,n_k}(\alpha)<t,\alpha\sim_{n_k}\beta^{\delta,n_k}(\alpha)}\phi(\xi^{\alpha}_{\zeta^{\delta,n_k}(\alpha)^-})(N(\alpha)-1),
\]
with $N(\alpha)$ defined as before, the offspring number of particle $\alpha$ and $
\alpha\sim_n t $ meaning particle $\alpha$ is alive at time $t$ in the $n$-th approximation. The quadratic variation of $M^{\delta,n}$ can be written as
\begin{align*}
\langle M^{\delta,n_k}(\phi)\rangle_t=\int_0^t \lim_{\Delta s\downarrow 0}\frac{\EE \big[M^{\delta,n_k}_{s+\Delta s}(\phi)^2-M^{\delta,n_k}_s(\phi)^2\big|\cF_{s}^{\delta, n_k}\big]}{\Delta s}ds.
\end{align*}
Following the idea of Dawson et al. 
\cite[Lemma 2.3]{aihpps-00-dawson-vaillancourt-wang}, one can show that
\begin{align*}
\langle M^{\delta,n_k}(\phi)\rangle_t=\int_0^t \phi^2(x)\widetilde{\sigma}_{  \delta}\big(t,x, \PP_{Y^{\delta, n_k}_t(x)}\big)^2X_s^{\delta,n_k}(dx)ds.
\end{align*}
Letting $k\to\infty$, we find that $X_t^{\delta,n_k}\to X_t^{\delta}$ a.s. for all $t\in \RR_+$. Thus for fixed $\delta>0$, one has that
\[
Y^{\delta,n_k}_t(x)=\langle X_t^{\delta,n_k},p_{\delta}(x-\cdot)\rangle\to \langle X_t^{\delta},p_{\delta}(x-\cdot)\rangle=Y^{\delta}_t(x)
\]
for any $(t,x)\in \RR_+\times \RR$ as $k\to\infty$. Therefore, equation \eqref{mart2} is a consequence of the continuity and boundedness of $\sigma$ and the dominated convergence theorem. The proof of this lemma is complete.
\end{proof}

\begin{remark}\label{rmk_mmt}
Note that in the proof of Lemma \ref{lmm_tight}, the continuity condition of $\sigma$ is used when evaluating the limit of quadratic variation of $M^{\delta, n_k}_t(\phi)$. Instead, assume Hypothesis \ref{hyp_fn} or \ref{hyp_ifn}. Due to the tightness argument again, it holds that for every $m\in \NN$, $\EE[(Y_t^{\delta,n_k}(x))^m]$ is bounded uniformly in $k\in \NN$, $(t,x)\in [0,T]\times \RR$ for any $T>0$. Concerning the fact that $Y_t^{\delta,n_k}(x)\to Y^{\delta}(x)$ almost surely, and thus in probability, the convergence is also in $L^m(\Omega)$ for all $m\in \NN$. As a result, $\lim_{k\to\infty}\EE[(Y_t^{\delta,n_k}(x))^m]=\EE[(Y_t^{\delta}(x))^m]$ for all $m\in \NN$ and $(t,x)\in \RR_+\times \RR$. This proves equation \eqref{mart2} under either Hypothesis \ref{hyp_fn} or \ref{hyp_ifn}, and hence the result as in Lemma \ref{lmm_tight} holds as well.
\end{remark}

\subsection{Moment duality and existence of solution to \texorpdfstring{\eqref{sbm1}}{}}\label{sec_dual}
Let $(X^{\delta},Y^{\delta})$ be a solution to MP 
\eqref{mart1} and \eqref{mart2} 
with initial condition $X_0\in \cM_F(\RR)$. In this section, we prove that $X^{\delta}_t$ has a Lebesgue density almost surely. To this end, we need to provide moment formulas for $\langle X_t^{\delta}, \phi\rangle$ with some function $\phi\in \cS(\RR)$. The moment formula can be derived by the method of moment duality (c.f. Dawson and Kurtz \cite{springer-82-dawson-kurtz}).

For any $t\in [0,T]$, $\mu\in \cM_F(\RR)$, $n\in \NN$, $\phi\in C_b^2(\RR^n)$, we define functions $F$ and $G^{\delta}$ as follows,
\begin{align}\label{def_F}
F(\mu,(n,\phi))=\langle \mu^{\otimes n}, \phi\rangle=\int_{\RR^n}\mu^{\otimes n}(d\by_n)\phi(\by_n),
\end{align} 
and
\begin{align}\label{def_G}
G^{\delta}(t,\mu,(n,\phi))=\frac{1}{2}\langle \mu^{\otimes n}, \Delta_n \phi\rangle+\sum_{1\leq i<j\leq n}\langle \mu^{\otimes (n-1)}, \Phi_{ij}^{\delta}(t) \phi\rangle,
\end{align}
where $\by_n$ is short for $(y_1,\dots, y_n)\in \RR^n$, $\Delta_n $ denotes the $n$-dimensional Laplacian operator in space and $\Phi_{ij}^{\delta}(t)\phi$ is a function of $n-1$ variables with the $i$-th and $j$-th variables of $\phi$ coalesced, and  then multiplying by $\widetilde{\sigma}_{ \delta}^2( t,x_i, \PP_{Y_{t}^{\delta}(x_i)})$, namely,
\begin{align}\label{def_phiij}
\Phi_{ij}^{\delta}(t) \phi(x_1,\dots, x_{n-1})=&\widetilde{\sigma}_{ \delta}^2( t,x_i, \PP_{Y_{t}^{\delta}})(\varphi_{ij}\phi)(x_1,\dots, x_{n-1}),
\end{align}
with the coalescing operator $\varphi_{ij}$ given by
\begin{align*}
(\varphi_{ij}\phi)(x_1,\dots, x_{n-1})=\phi(x_1,\dots, x_{j-1},x_i,x_j,\dots, x_{n-1}).
\end{align*}
Then, we have the next lemma, whose proof is just an application of It\^{o}'s formula, we skip it for the sake of conciseness and refer readers to Xiong \cite[Lemma 1.3.2]{ws-13-xiong} for a similar result.
\begin{lemma}
  Let $(X^{\delta},Y^{\delta})$ be a solution to MP \eqref{mart1} and \eqref{mart2} with initial condition $X_0\in \cM_F(\RR)$. Then, for any $n\in \NN$ and $\phi\in \cS(\RR^n)$, the process
  \begin{align}\label{dual1}
  F(X^{\delta}_t, (n,\phi)) - \int_0^t G^{\delta}(s, X^{\delta}_s, (n,\phi))ds
  \end{align}
  is a martingale.
\end{lemma}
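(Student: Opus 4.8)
The plan is to apply It\^o's formula to the process $t\mapsto F(X^{\delta}_t,(n,\phi))$, using the martingale problem \eqref{mart1}--\eqref{mart2} as the input, and to identify the finite-variation part with $\int_0^t G^{\delta}(s,X^{\delta}_s,(n,\phi))\,ds$. Since $F(\mu,(n,\phi))=\langle\mu^{\otimes n},\phi\rangle$ is an $n$-fold product of linear functionals of $\mu$, the natural first step is to write it as a polynomial in the one-dimensional martingales $\langle X^{\delta}_t,\psi\rangle$. Concretely, I would approximate $\phi\in\cS(\RR^n)$ by finite linear combinations of tensor products $\psi_1\otimes\cdots\otimes\psi_n$ with $\psi_k\in\cS(\RR)$, so that $F(X^{\delta}_t,(n,\psi_1\otimes\cdots\otimes\psi_n))=\prod_{k=1}^n\langle X^{\delta}_t,\psi_k\rangle$, and then apply the multidimensional It\^o formula to the smooth function $(a_1,\dots,a_n)\mapsto\prod_k a_k$ of the $n$ semimartingales $a_k(t)=\langle X^{\delta}_t,\psi_k\rangle$.

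The It\^o expansion produces three kinds of terms. The drift terms $\frac12\langle X^{\delta}_s,\Delta\psi_k\rangle$ coming from \eqref{mart1}, summed over $k$ with the remaining factors, reassemble into $\frac12\langle (X^{\delta}_s)^{\otimes n},\Delta_n\phi\rangle$; this uses only that $\Delta_n(\psi_1\otimes\cdots\otimes\psi_n)=\sum_k\psi_1\otimes\cdots\otimes\Delta\psi_k\otimes\cdots\otimes\psi_n$. The martingale terms $\int_0^t(\prod_{l\neq k}\langle X^{\delta}_s,\psi_l\rangle)\,dM^{\delta}_s(\psi_k)$ are genuine martingales (after checking square-integrability, which follows from the boundedness of $\widetilde\sigma_\delta$ together with moment bounds on $\langle X^{\delta}_t,\1\rangle$ obtainable from \eqref{mart1}--\eqref{mart2}), so they do not contribute to the finite-variation part. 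The crucial contribution is the sum of covariation terms $\sum_{i<j}\int_0^t(\prod_{l\neq i,j}\langle X^{\delta}_s,\psi_l\rangle)\,d\langle M^{\delta}(\psi_i),M^{\delta}(\psi_j)\rangle_s$; here one polarizes \eqref{mart2} to get $d\langle M^{\delta}(\psi_i),M^{\delta}(\psi_j)\rangle_s=\int_{\RR}\psi_i(x)\psi_j(x)\,\widetilde\sigma_\delta^2(s,x,\PP_{Y^{\delta}_s(x)})\,X^{\delta}_s(dx)\,ds$. Recognizing that $\psi_i(x)\psi_j(x)\prod_{l\neq i,j}\psi_l(x_l)$ integrated against $X^{\delta}_s(dx)\,(X^{\delta}_s)^{\otimes(n-2)}(d\bx)$ is exactly $\langle(X^{\delta}_s)^{\otimes(n-1)},\varphi_{ij}\phi\rangle$ weighted by $\widetilde\sigma_\delta^2(s,x_i,\PP_{Y^{\delta}_s(x_i)})$, this term equals $\sum_{i<j}\langle(X^{\delta}_s)^{\otimes(n-1)},\Phi_{ij}^{\delta}(s)\phi\rangle$ integrated in $s$, matching \eqref{def_G}. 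Finally I would remove the tensor-product restriction by linearity and a density/approximation argument in $\cS(\RR^n)$, checking that all the functionals appearing are continuous under the relevant convergence.

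The main obstacle I anticipate is the justification of the It\^o formula in this infinite-dimensional, measure-valued setting: $\langle X^{\delta}_t,\psi_k\rangle$ is only known to be a continuous square-integrable semimartingale through the martingale problem, and one must be careful that the polarized bracket $\langle M^{\delta}(\psi_i),M^{\delta}(\psi_j)\rangle$ is well-defined and given by the polarization of \eqref{mart2}---this is standard but requires that \eqref{mart2} holds simultaneously for a countable dense family of test functions, hence on a common probability-one event. A secondary technical point is controlling moments: to know the martingale terms are true martingales and to pass to the limit in the tensor approximation, one needs $\sup_{s\le T}\EE[\langle X^{\delta}_s,\1\rangle^n]<\infty$, which follows from a Gronwall argument applied to \eqref{mart1}--\eqref{mart2} with $\phi$ approximating $\1$ (or from the total-mass process being a Feller diffusion with bounded branching rate). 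Given these, the computation is the routine one indicated in Xiong \cite[Lemma 1.3.2]{ws-13-xiong}, so I would state the lemma and refer to that source for the details of the It\^o calculation, as the authors do.
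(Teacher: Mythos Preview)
Your proposal is correct and follows exactly the approach the paper indicates: the authors omit the proof entirely, remarking only that it is ``just an application of It\^{o}'s formula'' and pointing to Xiong \cite[Lemma 1.3.2]{ws-13-xiong}. Your reduction to tensor products, application of the product It\^{o} formula to the semimartingales $\langle X^{\delta}_t,\psi_k\rangle$, identification of the drift with $\tfrac12\langle\mu^{\otimes n},\Delta_n\phi\rangle$ and of the polarized brackets with the coalescence terms $\langle\mu^{\otimes(n-1)},\Phi_{ij}^{\delta}(s)\phi\rangle$, followed by a density argument, is precisely the standard computation behind that reference.
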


In the next step, we define the moment dual of $X^{\delta}$. 
Given $n\in \NN$, let $\{\tau_k\}_{1\leq k\leq n-1}$ be independent exponential random variables. For each $k$, the parameter of $\tau_k$ is $\frac{1}{2}(n-k+1)(n-k)$.  Let $\eta_{k}=\sum_{i=1}^k\tau_i$ for all $k=1,\dots, n-1$, and by convention $\eta_0=0$ and $\eta_n=\infty$. Then, we define  an $\NN$-valued decreasing Markov process starting at $n$, by
\[
n_{\eta_k^-}=n-k+1, \quad n_{\eta_k}=n-k,
\]
for $k=1,\dots, n-1$ and $n_t=1$ for all $t\geq \eta_{n-1}$. Then, we can  also write
\begin{align}\label{def_nt}
n_t=\sum_{k=1}^n(n-k+1)\1_{\eta_{k-1}\leq t<\eta_{k}}.
\end{align}
Let $\{S_k^{\delta}(t): 1\leq k\leq n-1,t>0\}$ be a collection of independent random variables defined as follows. For any $k=1,\dots, n-1$ and $t>0$, $S_k^{\delta}(t)$ is uniformly  distributed on $\{\Phi_{ij}^{\delta}(t):1\leq i<j\leq k\}$ where $\Phi_{ij}^{\delta}(t)$ are defined as in \eqref{def_phiij}. We also write $T^{\otimes k}$ for the the semigroup generated by $\frac{1}{2}\Delta_k$ on $\RR^k$ for all $k=1,\dots,n$,  namely,
\[
T^{\otimes k}_t \varphi(\bx_k)=\int_{\RR^k}d\by_k\prod_{i=1}^kp_{t}(x_i-y_i)\varphi(\by_k)
\]
for all $\varphi\in C_b^2(\RR^n)$, the space of bounded functions on $\RR$ with bounded first and second derivatives. Let $\phi\in \cS(\RR^{n})$, we define a stochastic process $f^{\delta}_t$ starting  at $f_0=   \phi $  by
\begin{align}\label{def_ft}
f^{\delta}_t=T_{t-\eta_{k}}^{\otimes n-k}S_{n-k+1}^{\delta}(\eta_{k})T_{\tau_{k}}^{\otimes n-k+1}\cdots S^{\delta}_{n-1}(\eta_{2})T^{\otimes (n-1)}_{\tau_{2}} S_{n}^{\delta}(\eta_{1})T^{\otimes n}_{\tau_{1}}\phi,
\end{align}
provided $\eta_k\leq t<\eta_{k+1}$ with $k=0,\dots,n-1$.
\begin{lemma}\label{lmm_mtgdual}
  Let $F$, $G^{\delta}$, $n_t$ and $f_t$ be given as in \eqref{def_F}, \eqref{def_G}, \eqref{def_nt} and \eqref{def_ft} respectively. Define a function $H^{\delta}$ by
  \[
  H^{\delta}(t,\mu, (k,\phi)) =G^{\delta}(t,\mu,(k,\phi)) - \frac{1}{2}k(k-1)F(\mu,(k,\phi))
  \]
  for all $t\in \RR_+$, $\mu\in \cM_F(\RR)$, $n\in \NN$ and $\phi\in C_b^2(\RR)$. Then, for any $\mu \in \cM_F(\RR)$, the process
  \begin{align}\label{def_mdual}
  F(\mu, (n_t,f^{\delta}_t)) - \int_0^t H^{\delta}(s,\mu, (n_s,f^{\delta}_s))ds
  \end{align}
  is a martingale.
\end{lemma}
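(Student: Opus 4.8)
The plan is to establish that \eqref{def_mdual} is a martingale by combining the martingale property from the previous lemma (applied to the fixed argument process $F(X^\delta_t,(n,\phi))$) with a separate analysis of the ``backward'' dual process $(n_t, f^\delta_t)$, and then invoking the standard duality principle of Dawson and Kurtz \cite{springer-82-dawson-kurtz}. Concretely, I would first observe that the generator of the $\cM_F(\RR)$-valued process acting on functionals of the form $\mu \mapsto F(\mu,(n,\phi))$ is exactly $G^\delta(t,\mu,(n,\phi))$, by the preceding lemma. Next, I would identify the generator of the Markov process $t \mapsto (n_t, f^\delta_t)$ on the state space $\bigcup_{k} (\{k\}\times \cS(\RR^k))$: the deterministic flow part $T^{\otimes k}_t$ contributes $\frac12\langle \mu^{\otimes k},\Delta_k\phi\rangle$-type terms, and the jumps, which occur at rate $\frac12 k(k-1)$ and replace $(k,\phi)$ by $(k-1, \Phi^\delta_{ij}(\eta)\phi)$ with $(i,j)$ chosen uniformly among the $\binom{k}{2}$ pairs, contribute $\frac12 k(k-1)\cdot \binom{k}{2}^{-1}\sum_{i<j}\langle \mu^{\otimes(k-1)}, \Phi^\delta_{ij}\phi\rangle - \frac12 k(k-1) F(\mu,(k,\phi)) = \sum_{i<j}\langle\mu^{\otimes(k-1)},\Phi^\delta_{ij}\phi\rangle - \frac12 k(k-1)F(\mu,(k,\phi))$.

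Adding the two pieces, the generator of $(n_t,f^\delta_t)$ applied to $F(\mu,(\cdot,\cdot))$ equals $G^\delta(t,\mu,(k,\phi)) - \frac12 k(k-1) F(\mu,(k,\phi)) = H^\delta(t,\mu,(k,\phi))$. This is precisely the Dawson--Kurtz duality relation: the time-inhomogeneous generator $\cL_1^t$ of $X^\delta$ satisfies $\cL_1^t F(\cdot,(n,\phi)) = G^\delta(t,\cdot,(n,\phi))$ while the generator $\cL_2^t$ of $(n_t,f^\delta_t)$ satisfies $\cL_2^t F(\mu,\cdot) = H^\delta(t,\mu,\cdot)$, and the difference $G^\delta - H^\delta = \frac12 k(k-1) F$ is the ``killing/correction'' term that appears symmetrically. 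The conclusion that \eqref{def_mdual} is a martingale then follows from It\^o's formula applied to $s \mapsto F(\mu,(n_s,f^\delta_s))$ along the jump times $\eta_k$ and the smooth flow in between: between jumps, $\frac{d}{ds}F(\mu,(n_s,f^\delta_s)) = \frac12\langle\mu^{\otimes n_s},\Delta_{n_s}f^\delta_s\rangle$, and at each jump time the compensated jump martingale contributes, so subtracting $\int_0^t H^\delta(s,\mu,(n_s,f^\delta_s))ds$ leaves a martingale. I would verify integrability using Hypothesis \ref{hyp_0}(i): since $\widetilde\sigma_\delta^2$ is bounded by $K_0$ and each $T^{\otimes k}_t$ is a contraction on $C_b$ (so $\|f^\delta_t\|_\infty \le K_0^{n-1}\|\phi\|_\infty$), and $\mu$ is a finite measure, all the quantities $F(\mu,(n_s,f^\delta_s))$ are uniformly bounded, which also gives the finiteness needed for the compensator.

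The main obstacle I anticipate is the careful bookkeeping of the time-inhomogeneity: the operators $\Phi^\delta_{ij}(t)$ and hence $S^\delta_k(t)$ depend on $t$ through $\widetilde\sigma_\delta^2(t,\cdot,\PP_{Y^\delta_t})$, so the dual process $(n_t,f^\delta_t)$ is genuinely time-inhomogeneous and one must be scrupulous that the coefficient appearing in $\Phi^\delta_{ij}(\eta_k)$ is evaluated at the actual jump time $\eta_k$, matching the $\Phi^\delta_{ij}(s)$ inside $G^\delta(s,\cdot,\cdot)$ in the integrand. The clean way around this is to run the standard duality argument on the product filtration and note that, because $X^\delta$ is held fixed (we are dualizing $F(X^\delta_t,(n,\phi))$ against $F(\mu,(n_t,f^\delta_t))$ for deterministic $\mu$, not taking $\mu = X^\delta$), no measurability subtlety between the two processes arises — the $t$-dependence of $\Phi^\delta_{ij}(t)$ is just a fixed measurable coefficient. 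I would therefore present the proof as: (1) compute the two generators as above; (2) check the duality identity $\cL_1^t F(\cdot,\xi)(\mu) - \cL_2^t F(\mu,\cdot)(\xi) = \tfrac12 k(k-1) F(\mu,\xi)$ for $\xi = (k,\phi)$; (3) apply It\^o to $F(\mu,(n_s,f^\delta_s))$ to read off that \eqref{def_mdual} is a local martingale; (4) upgrade to a true martingale via the uniform bound from Hypothesis \ref{hyp_0}(i). I expect step (3) to require the bulk of the writing, but to be routine given the structure already set up in \eqref{def_F}--\eqref{def_ft}.
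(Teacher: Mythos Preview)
Your proposal is correct and follows essentially the same route as the paper: both compute the infinitesimal generator of the dual process $(n_t, f^\delta_t)$ acting on $F(\mu,\cdot)$ and identify it as $H^\delta$, by separating the deterministic semigroup contribution $\tfrac12\langle\mu^{\otimes k},\Delta_k f^\delta_t\rangle$ from the compensated jump part. The paper packages this computation slightly differently---it verifies the conditional-expectation limit $\lim_{\Delta t\downarrow 0}\Delta t^{-1}\EE[F(\mu,(n_{t+\Delta t},f^\delta_{t+\Delta t}))-F(\mu,(n_t,f^\delta_t))\mid (n_t,f^\delta_t)]=H^\delta$ directly and then invokes \cite[Proposition~4.1.7]{wiley-86-ethier-kurtz} rather than It\^o's formula---but the substance is identical; note also that your step~(2) on the duality identity is not needed for this lemma (it belongs to Proposition~\ref{mmtdual}), so only your steps~(3) and~(4) are actually required here.
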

Before providing the proof, we remark here that the smoothness of $\sigma$ (see \eqref{def_tsgm}) ensures that as a function of $x$, $\widetilde{\sigma}_{ \delta}(t,x, \PP_{Y^{\delta}_t(x)})$ is in $C_b^2(\RR)$. Thus $f^{\delta}_t\in C_b^2(\RR^{n_t})$ for all $t\in \RR_+$. It is necessary to make sense of $H^{\delta}(t,\mu, (n_t,f^{\delta}_t))$ that involves a Laplacian operator acting on $f^{\delta}_t$.
\begin{proof}[Proof of Lemma \ref{lmm_mtgdual}]
The proof of this lemma can be done as an application of Either and Kurtz's result  \cite[Proposition 4.1.7]{wiley-86-ethier-kurtz} by showing that
  \begin{align}\label{for_cdg}
  \lim_{\Delta t\downarrow 0}\frac{1 }{\Delta t}\EE\big[ \big(F(\mu, (n_{t+\Delta t},f^{\delta}_{t+\Delta t}))-F(\mu, (n_t,f^{\delta}_t))\big)\big|(n_t,f^{\delta}_t)\big]=H^{\delta}(t,\mu, (n_t,f^{\delta}_t))
  \end{align}
  for all $t\in \RR_+$. Without loss of generality, we only prove  equality  \eqref{for_cdg} on the set $\{n_t = k\}$ for  some $k=1,\dots, n$. Recall that
  \[
  \{n_t = k\}=\{\eta_{n-k}\leq t<\eta_{n-k+1} \},
  \] 
  $\tau_{n-k+1}=\eta_{n-k+1}-\eta_{n-k}$ is an exponential random variable with parameter $\frac{1}{2}k(k-1)$ if $k>1$ and $\tau_{n}=\infty$. We see 
  \begin{align}\label{hmart1}
  &\EE \big[\big(F(\mu, (n_{t+\Delta t},f^{\delta}_{t+\Delta t}))-F(\mu, (n_t,f^{\delta}_t))\big)\big|(n_t,f^{\delta}_t){ \1_{n_t=k}}\big]=I_1+I_2+o(\Delta t),
  \end{align}
  where 
\[
I_1=\EE \big[\big(F(\mu, (k,f^{\delta}_{t+\Delta t}))-F(\mu, (k,f^{\delta}_t))\big)\1_{\{n_{t+\Delta t}=k\}}\big|(n_t,f^{\delta}_t){ \1_{n_t=k}}\big]
\]
and
\[
I_2=\EE \big[\big(F(\mu, (k-1,f^{\delta}_{t+\Delta t}))-F(\mu, (k,f^{\delta}_t))\big)\1_{\{n_{t+\Delta t}=k-1\}}\big|(n_t,f^{\delta}_t){ \1_{n_t=k}}\big].
\]
Suppose that $k>1$. Then, by the memoryless property of exponential random variables, we have
\begin{align*}
I_1=&\big(\langle \mu^{\otimes k}, T_{\Delta t}^{\otimes k}f_t^{\delta}\rangle \PP(\tau_{n-k+1}>\Delta t)-\langle \mu^{\otimes k}, f_t^{\delta}\rangle\big)\1_{\{n_t=k\}}.
\end{align*}
This implies that
\begin{align}
\lim_{\Delta t\downarrow 0}\frac{1}{\Delta t}I_1=\frac{1}{2}\big(\langle \mu^{\otimes k}, \Delta_kf^{\delta}_t\rangle {  -}k(k-1)\langle \mu^{\otimes k}, f^{\delta}_t\rangle\big)\1_{\{n_t=k\}}.
\end{align}
On the other hand, since $\Phi^{\delta}_{ij}$ is uniformly distributed, it follows that
\begin{align*}
I_2=\sum_{1\leq i<j\leq k}\frac{2}{k(k-1)}\EE\big[&\langle \mu^{\otimes (k-1)}, T^{\otimes (k-1)}_{\Delta t-\tau_{n-k+1}}\Phi^{\delta}_{ij}(t+\tau_{n-k+1})T^{\otimes k}_{\tau_{n-k+1}}f^{\delta}_{t}\rangle\\
&\1_{\{\tau_{n-k+1}\leq \Delta t <\tau_{n-k+1}+\tau_{n-k}\}}\big|(n_t,f^{\delta}_t)\big]\1_{\{n_t=k\}}-\langle \mu^{\otimes k},f^{\delta}_t\rangle\1_{\{n_t=k\}}.
\end{align*}
Notice that if $k>2$, we have
\begin{align*}
&\EE\big[\langle \mu^{\otimes (k-1)}, T^{\otimes (k-1)}_{\Delta t-\tau_{n-k+1}}\Phi^{\delta}_{ij}(t+\tau_{n-k+1})T^{\otimes k}_{\tau_{n-k+1}}f^{\delta}_{t}\rangle\1_{\{\tau_{n-k+1}\leq \Delta t <\tau_{n-k+1}+\tau_{n-k}\}}\big|(n_t,f^{\delta}_t){ \1_{n_t=k}}\big]\\
=&\int_0^{\Delta t}ds\int_{\Delta t-s}^{\infty}dr\langle \mu^{\otimes (k-1)}, T^{\otimes (k-1)}_{\Delta t-s}\Phi^{\delta}_{ij}(t+s)T^{\otimes k}_{s}f^{\delta}_{t}\rangle\\
&\times \frac{1}{2}k(k-1)e^{-\frac{1}{2}k(k-1)s}\times \frac{1}{2}(k-1)(k-2)e^{-\frac{1}{2}(k-1)(k-2)r}\\
=&\frac{1}{2}k(k-1)\int_0^{\Delta t}ds\langle \mu^{\otimes (k-1)}, T^{\otimes (k-1)}_{\Delta t-s}\Phi^{\delta}_{ij}(t+s)T^{\otimes k}_{s}f^{\delta}_{t}\rangle e^{-\frac{1}{2}k(k-1)s}e^{-\frac{1}{2}(k-1)(k-2)(\Delta t-s)},
\end{align*}
and for $k=2$, the following equality holds
\begin{align*}
&\EE\big[\langle \mu^{\otimes (k-1)}, T^{\otimes (k-1)}_{\Delta t-\tau_{n-k+1}}\Phi^{\delta}_{ij}(t+\tau_{n-k+1})T^{\otimes k}_{\tau_{n-k+1}}f^{\delta}_{t}\rangle\1_{\{\tau_{n-k+1}\leq \Delta t <\tau_{n-k+1}+\tau_{n-k}\}}\big|(n_t,f^{\delta}_t){ \1_{n_t=k}}\big]\\
=&\frac{1}{2}k(k-1)\int_0^{\Delta t}ds\langle \mu^{\otimes (k-1)}, T^{\otimes (k-1)}_{\Delta t-s}\Phi^{\delta}_{ij}(t+s)T^{\otimes k}_{s}f^{\delta}_{t}\rangle e^{-\frac{1}{2}k(k-1)s}.
\end{align*}
Therefore, 
\begin{align}\label{hmart3}
\lim_{\Delta t\downarrow 0}\frac{1}{\Delta t}I_2=\sum_{1\leq i<j\leq k}\langle \mu^{\otimes (k-1)}, \Phi^{\delta}_{ij}(t)f^{\delta}_t\rangle \1_{\{n_t=k\}}.
\end{align}
Equality \eqref{for_cdg} follows from \eqref{hmart1}-\eqref{hmart3}. Hence, by Proposition 4.1.7 of Ethier and Kurtz \cite{wiley-86-ethier-kurtz}, the process given by \eqref{def_mdual} is a martingale. The proof of this lemma is complete.
\end{proof}

\begin{proposition}\label{mmtdual}
Let $(X^{\delta},Y^{\delta})$ be a solution to MP \eqref{mart1} and \eqref{mart2} with initial condition $X_0\in \cM_F(\RR)$, and let $(n_t,f_t )$ be the pair of processes defined as in \eqref{def_nt} and \eqref{def_ft} respectively with $n_0=n\in \{1,2,\dots\}$ and $f_0=\phi \in \cS(\RR^n)$. Then, we have 
\[
\EE \big[ F(X_t^{\delta}, (n,\phi)) \big]= \EE \bigg[F(X_0, (n_t,f_t^{\delta})) \exp\Big( \int_0^t\frac{n_s(n_s-1)}{2}ds\Big) \bigg].
\]
\end{proposition}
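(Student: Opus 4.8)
The plan is to prove the moment duality identity by combining the two martingales established in the preceding lemmas. The key observation is that Lemma~\ref{lmm_mtgdual} produces, for a \emph{fixed} measure $\mu\in\cM_F(\RR)$, a martingale in the dual variables $(n_t,f^\delta_t)$, while the earlier lemma produces, for a fixed dual configuration $(n,\phi)$, a martingale in the forward variable $X^\delta_t$. The standard duality machinery (see e.g. Ethier and Kurtz \cite[Section 4.4]{wiley-86-ethier-kurtz}) glues these together. Concretely, I would fix $t>0$, take the forward process $X^\delta$ and the backward process $(n_s,f^\delta_s)$ to be independent, and consider the function
\[
(s,\mu,(k,\psi))\mapsto F(\mu,(k,\psi))=\langle\mu^{\otimes k},\psi\rangle
\]
together with its two ``generators'' $G^\delta$ (acting on the forward side) and the backward generator implicit in Lemma~\ref{lmm_mtgdual}. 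The bookkeeping term that must be tracked is the multiplicative factor $\exp\big(\int_0^t \tfrac12 n_s(n_s-1)\,ds\big)$, which compensates exactly for the $-\tfrac12 k(k-1)F$ correction appearing in $H^\delta$.

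The concrete steps I would carry out are as follows. First, define, for $s\in[0,t]$,
\[
\Psi(s)=\EE\Big[F\big(X^\delta_s,(n_{t-s},f^\delta_{t-s})\big)\exp\Big(\int_0^{t-s}\tfrac12 n_r(n_r-1)\,dr\Big)\Big],
\]
where the expectation is over the (independent) forward and backward processes. Second, I would show that $s\mapsto\Psi(s)$ is constant on $[0,t]$ by differentiating: the derivative has two contributions, one from letting the forward process evolve (which brings down $G^\delta$ applied via the martingale of the first lemma) and one from letting the backward process run (which, via Lemma~\ref{lmm_mtgdual}, brings down $H^\delta=G^\delta-\tfrac12 k(k-1)F$ plus the derivative of the exponential correction $\tfrac12 n_{t-s}(n_{t-s}-1)F$). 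These cancel: the two copies of $G^\delta$ agree because $F$ and $G^\delta$ are built from the same kernel $\widetilde\sigma_\delta^2(r,x_i,\PP_{Y^\delta_r})$ on both sides, and the $-\tfrac12 k(k-1)F$ term is annihilated by the logarithmic derivative of the exponential weight. Third, evaluating $\Psi$ at the two endpoints $s=0$ and $s=t$ gives precisely
\[
\EE\big[F(X^\delta_t,(n,\phi))\big]=\Psi(t)=\Psi(0)=\EE\Big[F\big(X_0,(n_t,f^\delta_t)\big)\exp\Big(\int_0^t \tfrac{n_s(n_s-1)}{2}\,ds\Big)\Big],
\]
which is the claimed formula (using $X^\delta_0=X_0$, a non-random initial measure).

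The main obstacle, and the point requiring care, is the time-inhomogeneity introduced by $\widetilde\sigma_\delta^2(t,x,\PP_{Y^\delta_t})$: the operators $\Phi^\delta_{ij}(t)$ depend explicitly on $t$ through the (random but $\cF_t$-measurable, hence effectively deterministic once we condition on the forward path) law $\PP_{Y^\delta_t}$. In the classical super-Brownian duality the coefficient is constant, so the backward process is autonomous; here one must verify that the duality function $F$ still intertwines the two generators at each frozen time $t$, i.e. that
\[
G^\delta(s,\mu,(k,\psi))\ \text{evaluated against the forward semigroup}\ =\ \text{the backward action at the same}\ s.
\]
This is really the content of the computation in the proof of Lemma~\ref{lmm_mtgdual}, where $S^\delta_{n-k+1}(\eta_k)$ samples $\Phi^\delta_{ij}$ at the \emph{current} jump time, matching the time-dependence in $G^\delta$. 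A secondary technical point is integrability: one needs $\EE[\,|F(X^\delta_s,(n_{t-s},f^\delta_{t-s}))|\,e^{\int \frac12 n_r(n_r-1)dr}]<\infty$ uniformly in $s$ to justify differentiating under the expectation and to apply the martingale optional-stopping/Fubini steps. This follows from the boundedness of $\widetilde\sigma_\delta$ (so $f^\delta$ stays bounded in sup-norm by $\|\phi\|_\infty$ times a constant depending on $K_0$), the fact that $n_s\le n$ is bounded so the exponential weight is bounded by $e^{n(n-1)t/2}$, and the finite total mass of $X^\delta_s$ controlled by the martingale \eqref{mart1} with $\phi\equiv 1$ (or a standard localization if $1\notin\cS(\RR)$, approximating by $\phi_R\uparrow 1$ and passing to the limit).
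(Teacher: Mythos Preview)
Your approach is the same as the paper's---both invoke the Dawson--Kurtz duality, and the paper merely cites \cite[Corollary~3.3]{springer-82-dawson-kurtz} while flagging that ``minor adjustments'' are needed for time-inhomogeneity. Your attempt to make those adjustments explicit, however, has a gap. When you differentiate
\[
\Psi(s)=\EE\Big[F\big(X^\delta_s,(n_{t-s},f^\delta_{t-s})\big)\,\exp\Big(\int_0^{t-s}\tfrac{n_r(n_r-1)}{2}\,dr\Big)\Big],
\]
the forward increment produces $G^\delta(s,\,\cdot\,)$ while the dual increment (via Lemma~\ref{lmm_mtgdual}) produces $H^\delta(t-s,\,\cdot\,)=G^\delta(t-s,\,\cdot\,)-\tfrac12 k(k-1)F$. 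Your claim that ``the two copies of $G^\delta$ agree'' is precisely where the time-inhomogeneity bites: $G^\delta(s,\mu,(k,\psi))$ and $G^\delta(t-s,\mu,(k,\psi))$ differ in general because $\Phi^\delta_{ij}(r)$ carries the factor $\widetilde\sigma_\delta^2(r,\cdot,\PP_{Y^\delta_r})$, which genuinely depends on the calendar time $r$. So $\Psi'(s)\neq 0$ as written, and the cancellation you describe does not occur.

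The standard repair---and presumably the adjustment the paper has in mind---is to time-reverse the dual for each fixed horizon $t$: rerun the construction and Lemma~\ref{lmm_mtgdual} with the jump operator at $\eta_k$ taken to be $\Phi^\delta_{ij}(t-\eta_k)$ rather than $\Phi^\delta_{ij}(\eta_k)$, so that the dual's generator at running time $r\in[0,t]$ is $H^\delta(t-r,\,\cdot\,)$. With this reversed dual your $\Psi$ argument goes through verbatim, the two $G^\delta$ terms now both sitting at calendar time $s$. Since the pure-jump component $n_r$ is governed by time-homogeneous rates $\tfrac12 k(k-1)$, it is unaffected by the reversal and the exponential weight is unchanged; only the function-valued component $f^\delta$ is modified. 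For the downstream application in Lemma~\ref{lmm_ubn} the distinction is immaterial because only $\|\sigma\|_\infty$ enters the estimates, but for the identity itself the reversal is essential. Your integrability remarks are fine.
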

\begin{proof}
The proof follows from the development of Dawson and Kurtz \cite[Corollary 3.3]{springer-82-dawson-kurtz}, 
with minor adjustments due to the fact that $X^{\delta}_t$ and 
$(n_t,f^{\delta}_t)$ are time-inhomogeneous vis-\'{a}-vis time-homogeneous 
Markov processes. 
We omit the details for the sake of conciseness.
\end{proof}

\begin{proposition}
Let $(X^{\delta}, Y^{\delta})=(X^{\delta},\langle X^{\delta}, p_{\delta}(x-\cdot)\rangle)$ be a solution to MP \eqref{mp1} and \eqref{mp2} with initial condition $X_0\in \cM_F(\RR)$. Then, for every $t\in \RR_+$, $X^{\delta}_t$ has a Lebesgue density. Moreover, identifying $X_t^{\delta}(x)$ as the density of $X_t^{\delta}$, the pair  $(X^{\delta},Y^{\delta})$ satisfies equation \eqref{sbm1} for some space-time white noise $W$.
\end{proposition}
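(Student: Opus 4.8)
The plan is to read a Lebesgue density for $X^\delta_t$ off a second-moment bound and then recover the SPDE by a martingale representation theorem. The crucial simplification is that, since the law $\PP_{Y^\delta_s}$ is non-random, $\widetilde{\sigma}_\delta(s,x,\PP_{Y^\delta_s})$ is a \emph{deterministic} bounded function of $(s,x)$, so all the operators $\Phi^\delta_{ij}$ entering the moment dual are deterministic. First I would specialize Proposition \ref{mmtdual} to $n=2$ with a tensor test function $\phi=\psi\otimes\psi$. The dual $(n_t,f^\delta_t)$ is then explicit: $n_t$ drops from $2$ to $1$ at an $\mathrm{Exp}(1)$ time $\eta_1$, the only available coalescing operator is $\Phi^\delta_{12}$, the exponential weight $\exp(\int_0^t\tfrac{n_s(n_s-1)}{2}\,ds)$ equals $e^{t\wedge\eta_1}$ and cancels the density of $\eta_1$, and the resulting elementary computation yields, with $T_r$ the heat semigroup on $\RR$,
\[
\EE\big[\langle X^\delta_t,\psi\rangle^2\big]=\langle T_tX_0,\psi\rangle^2+\int_0^t\!\!\int_{\RR}\widetilde{\sigma}_\delta^2\big(s,x,\PP_{Y^\delta_s}\big)\,\big(T_s\psi\big)^2(x)\,\big(T_{t-s}X_0\big)(x)\,dx\,ds;
\]
the same identity also follows directly from the mild form of \eqref{mart1}--\eqref{mart2}.

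Next I would take $\psi=p_\epsilon(z-\cdot)$, so that $\langle X^\delta_t,\psi\rangle=(X^\delta_t*p_\epsilon)(z)$ is literally the Lebesgue density of the smoothed measure $X^\delta_t*p_\epsilon$ and $T_s\psi=p_{s+\epsilon}(z-\cdot)$. Integrating the identity above over $z\in\RR$ and using $\int_{\RR}p_r(z-x)^2\,dz=(4\pi r)^{-1/2}$, $\int_{\RR}(T_rX_0)(x)\,dx=X_0(\RR)$ and $\widetilde{\sigma}_\delta^2\le K_0$ (valid under any of Hypotheses \ref{hyp_0}, \ref{hyp_fn}, \ref{hyp_ifn}), one obtains for each fixed $t>0$
\[
\sup_{\epsilon>0}\EE\Big[\int_{\RR}(X^\delta_t*p_\epsilon)(z)^2\,dz\Big]\le\frac{X_0(\RR)^2}{\sqrt{4\pi t}}+K_0X_0(\RR)\int_0^t\frac{ds}{\sqrt{4\pi(s+\epsilon)}}<\infty .
\]
Thus $\{X^\delta_t*p_\epsilon\}_{\epsilon>0}$ is bounded in $L^2(\Omega\times\RR)$; I would extract a weak-$L^2$ limit point $X^\delta_t(\cdot)$ along $\epsilon_k\downarrow0$, and test the convergence against functions $\mathbf 1_A(\omega)\varphi(z)$ ($A\in\cF$, $\varphi\in C_c(\RR)$), using $X^\delta_t*p_\epsilon\to X^\delta_t$ weakly as measures and $\EE[X^\delta_t(\RR)]=X_0(\RR)<\infty$, to conclude $\langle X^\delta_t,\varphi\rangle=\langle X^\delta_t(\cdot),\varphi\rangle$ a.s.; running $\varphi$ over a countable dense family identifies $X^\delta_t(\cdot)$ as a Lebesgue density of $X^\delta_t$. (Equivalently, one may compute the cross-moment and check that $X^\delta_t*p_\epsilon$ is Cauchy in $L^2(\Omega\times\RR)$.) Setting $X^\delta_t(x):=\liminf_{k\to\infty}(X^\delta_t*p_{\epsilon_k})(x)$ produces a nonnegative version jointly measurable in $(t,x,\omega)$ that, by Fubini, agrees with the density of $X^\delta_t$ for Lebesgue-a.e.\ $t$ almost surely.

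For the SPDE I would use the density to rewrite the quadratic variation \eqref{mart2} as $\langle M^\delta(\phi)\rangle_t=\int_0^t\!\int_{\RR}\phi(x)^2\widetilde{\sigma}_\delta^2(s,x,\PP_{Y^\delta_s})X^\delta_s(x)\,dx\,ds$ and, by polarization, the covariations $\langle M^\delta(\phi),M^\delta(\psi)\rangle_t$. Hence $\phi\mapsto M^\delta_t(\phi)$ extends to a worthy orthogonal martingale measure whose covariance measure is absolutely continuous with respect to $ds\,dx$ with nonnegative density $\widetilde{\sigma}_\delta^2(s,x,\PP_{Y^\delta_s})X^\delta_s(x)$. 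By the standard representation theorem for such measures (cf. Xiong \cite{ws-13-xiong}, Perkins \cite{springer-02-perkins}), on a possibly enlarged probability space there is a space-time white noise $W$ with
\[
M^\delta_t(\phi)=\int_0^t\!\!\int_{\RR}\phi(x)\,\widetilde{\sigma}_\delta\big(s,x,\PP_{Y^\delta_s}\big)\sqrt{X^\delta_s(x)}\,W(ds,dx).
\]
Inserting this into \eqref{mart1} and integrating the Laplacian by parts gives exactly the weak formulation (equivalently, the random-field/mild form) of the first line of \eqref{sbm1}, and the second line is just the definition of $Y^\delta$ once $X^\delta_t(dy)=X^\delta_t(y)\,dy$.

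I expect the genuinely delicate point to be this last step: the martingale-measure representation precisely on the (random) set where $X^\delta_s(x)=0$, where $M^\delta$ carries no information and the white noise must be manufactured on an enlarged space, together with the measurability bookkeeping needed to pick a jointly measurable version of $X^\delta_s(x)$ so that the stochastic integrals are well posed. By contrast, Steps 1--2 are essentially routine once Proposition \ref{mmtdual} is in hand.
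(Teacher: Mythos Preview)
Your proposal is correct and is precisely a fleshed-out version of the argument the paper merely sketches by citation: the $n=2$ moment bound (via Proposition~\ref{mmtdual} or, equivalently, the mild form of \eqref{mart1}--\eqref{mart2}) yields a uniform $L^2(\Omega\times\RR)$ bound on $X^\delta_t*p_\epsilon$ forcing absolute continuity --- this is exactly Wang's argument the paper cites --- and your martingale-measure representation of $M^\delta$ by a space-time white noise is the content of the Dawson--Vaillancourt--Wang result the paper invokes for the second half. One cosmetic slip: in your displayed second-moment identity the subscripts on $T_s\psi$ and $T_{t-s}X_0$ are swapped relative to what the mild form actually gives (it should read $(T_{t-s}\psi)^2(x)(T_sX_0)(x)$ with $\widetilde\sigma_\delta^2(s,\cdot)$), but since you immediately bound $\widetilde\sigma_\delta^2\le K_0$ and the $ds$-integral is symmetric under $s\mapsto t-s$, this is immaterial to the estimate.
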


  \begin{proof}
Following the idea of Wang \cite[Theorem 2.1]{ptrf-97-wang}, as an application of Proposition \ref{mmtdual}, we can show that $X^{\delta}$ is absolutely continuous with respect to the Lebesgue measure. Then, by proceeding along a similar argument as in Dawson et al. \cite[Theorem 1.2]{aihpps-00-dawson-vaillancourt-wang}, it can be proved that the density, still denoted by $X^{\delta}$, together with $Y^{\delta}$ satisfies equation \eqref{sbm1}. The proof is quite standard, we omit it in the present paper.
\end{proof}

\subsection{Convergence of \texorpdfstring{$X^{\delta}$}{} and \texorpdfstring{$Y^{\delta}$}{}}\label{sec_cov}
Fix a time horizon $[0,T]$. In this section, we will show the convergence of $\{X^{\delta}\}_{\delta>0}$ as $\delta \downarrow 0$. We need the following tightness criteria for probability measures on $C([0,T]\times \RR;\RR)$.
\begin{lemma}\label{lmm_tt}
A family $\mathcal{P}$ of probability measures on $C([0,T]\times \RR;\RR)$ is precompact if
\begin{enumerate}[(i)]
\item \label{lmtt1} $\displaystyle \lim_{A\uparrow \infty} \sup_{\PP\in\mathcal{P}} \PP(|X_t(0)|>A)=0$.

\item \label{lmtt2} For each $x\in \RR$ and $\rho>0$,
\[
\lim_{\epsilon \downarrow 0}\sup_{\PP\in\mathcal{P}} \PP\Big(\sup_{0\leq s\leq t\leq T, |t-s|<\epsilon}|X_t(x)-X_s(x)|>\rho\Big)=0.
\]

\item \label{lmtt3} For every $R>0$  and $\rho>0$,
\[
\lim_{\epsilon \downarrow 0}\sup_{\PP\in\mathcal{P}} \PP\Big(\sup_{0\leq t\leq T, -R\leq x\leq y\leq R, |x-y|<\epsilon}|X_t(x)-X_t(y)|>\rho\Big)=0.
\]
\end{enumerate}
\end{lemma}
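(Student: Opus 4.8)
The plan is to reduce the statement to the classical Arzelà–Ascoli characterisation of precompact sets in $C([0,T]\times\RR;\RR)$, equipped with the topology of uniform convergence on compact subsets of $[0,T]\times\RR$, together with the standard Prokhorov criterion for tightness of measures on a Polish space. First I would recall that $C([0,T]\times\RR;\RR)$ with this topology is a Polish space, metrisable by $d(f,g)=\sum_{R=1}^\infty 2^{-R}\big(1\wedge \sup_{[0,T]\times[-R,R]}|f-g|\big)$, so that by Prokhorov's theorem it suffices to produce, for each $\varepsilon>0$, a compact set $\cK_\varepsilon\subset C([0,T]\times\RR;\RR)$ with $\PP(X\in\cK_\varepsilon)\geq 1-\varepsilon$ for all $\PP\in\cP$. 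The Arzelà–Ascoli theorem on this space says that a set is relatively compact iff it is pointwise bounded at one point and equicontinuous on every compact rectangle $[0,T]\times[-R,R]$; and since $[0,T]\times[-R,R]$ is a convex compact set, equicontinuity there can be controlled by a joint modulus of continuity that splits into the temporal modulus in (ii) and the spatial modulus in (iii) via the triangle inequality $|X_t(x)-X_s(y)|\leq |X_t(x)-X_s(x)|+|X_s(x)-X_s(y)|$.

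Next I would carry out the explicit construction. Fix $\varepsilon>0$. By (i) choose $A$ with $\sup_{\PP}\PP(|X_0(0)|>A)<\varepsilon/3$ — here I use that condition (ii) applied at $x=0$ already lets us pass from $|X_0(0)|$ to $\sup_{0\le t\le T}|X_t(0)|$ up to another arbitrarily small error, so in fact $\sup_\PP\PP(\sup_{0\le t\le T}|X_t(0)|>A')<\varepsilon/3$ for $A'$ large. For the equicontinuity part, for each integer $R\geq 1$ use (ii) (with $x$ ranging over, say, the finite grid of points of $[-R,R]$, then filling in by (iii)) and (iii) to pick, for each $k\geq 1$, an $\varepsilon_{R,k}>0$ such that
\begin{align*}
\sup_{\PP\in\cP}\PP\Big(\sup_{\substack{0\le s\le t\le T,\ -R\le x,y\le R\\ |t-s|<\varepsilon_{R,k},\ |x-y|<\varepsilon_{R,k}}}|X_t(x)-X_s(y)|>\tfrac1k\Big)<\frac{\varepsilon}{3}\,2^{-R-k}.
\end{align*}
Then set $\cK_\varepsilon$ to be the closure of the set of $f\in C([0,T]\times\RR;\RR)$ with $\sup_{[0,T]}|f_t(0)|\le A'$ and, for every $R,k$, joint oscillation of $f$ on $[0,T]\times[-R,R]$ over $(|t-s|\vee|x-y|)<\varepsilon_{R,k}$ at most $1/k$. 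By Arzelà–Ascoli this $\cK_\varepsilon$ is compact, and summing the discarded probabilities over $R,k$ gives $\sup_\PP\PP(X\notin\cK_\varepsilon)<\varepsilon$, as required.

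The only genuinely delicate point — and the step I would be most careful about — is passing from the \emph{separate} moduli of continuity in (ii) (temporal, at each fixed $x$) and (iii) (spatial, uniformly in $t$) to a \emph{joint} modulus on the rectangle: a priori (ii) is stated pointwise in $x$, not uniformly. This is handled by the triangle-inequality splitting above: given $-R\le x,y\le R$ with $|x-y|$ small and $|t-s|$ small, write $|X_t(x)-X_s(y)|\le |X_t(x)-X_s(x)|+|X_s(x)-X_s(y)|$; the second term is uniformly small on the event in (iii), while for the first term one covers $[-R,R]$ by finitely many points $x_1,\dots,x_m$ (depending on $R$ and the target $\rho$ but not on $\PP$), applies (ii) at each $x_j$, and uses (iii) once more to compare $X_t(x)$ with $X_t(x_j)$ for the nearest grid point — all with only finitely many exceptional events of total probability $<\varepsilon/3$. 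Everything else is bookkeeping with countable unions of small-probability events and the metric $d$ above; no further input beyond Prokhorov and Arzelà–Ascoli is needed.
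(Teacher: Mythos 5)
Your argument is correct and follows essentially the same route the paper implicitly relies on: the paper omits the proof and simply cites Stroock–Varadhan, Theorem 1.3.2, whose one-parameter proof is exactly this Prokhorov-plus-Arzel\`a--Ascoli construction, and you have correctly extended it to the two-parameter setting. In particular, you have identified and handled the one genuinely nontrivial step — converting the pointwise-in-$x$ temporal modulus (ii) and the uniform-in-$t$ spatial modulus (iii) into a joint modulus on $[0,T]\times[-R,R]$ via a finite spatial grid and the triangle inequality — which is precisely the adjustment needed for the multi-parameter case.
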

\begin{proof}
Results for probability measures on space of one parameter processes are well-known (c.f. Stroock and Varadhan \cite[Theorem 1.3.2]{springer-06-stroock-varadhan}). Multi-parameter cases are quite similar, we omit the proof for the sake of conciseness. We also refer readers to Hu et al. 
\cite[Appendices A.2 and A.3]{ap-17-hu-huang-le-nualart-tindel} for similar criteria.
\end{proof}

The next lemmas show the uniform boundedness of the moments of $X_t^{\delta}(x)$ and its increments in time and in space. 
\begin{lemma}\label{lmm_ubn}
Let $(X^{\delta},Y^{\delta})$ be a weak solution to \eqref{sbm1} with initial condition $X_0\in \cM_F$.  Suppose that $X_0$ has a bounded density, still denoted by $X_0$. 
Then, for all $n\in \NN$,
\begin{align}\label{unby}
\sup_{\delta\in (0,1)}\sup_{(t,x)\in [0,T]\times \RR}\big(\EE [Y_t^{\delta}(x)^{n}]+\EE [X_t^{\delta}(x)^{n}]\big)< \infty.
\end{align}
\end{lemma}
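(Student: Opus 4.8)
### Proof strategy for Lemma \ref{lmm_ubn}

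The plan is to derive a closed system of moment inequalities from the mild (Duhamel) formulation of the SPDE \eqref{sbm1} and then run an induction on the moment order $n$. First I would write the weak solution in its mild form,
\[
X_t^{\delta}(x)=\int_{\RR}p_t(x-y)X_0(y)\,dy+\int_0^t\int_{\RR}p_{t-s}(x-y)\widetilde{\sigma}_{\delta}(s,y,\PP_{Y_s^{\delta}})\sqrt{X_s^{\delta}(y)}\,W(ds,dy),
\]
and note that the first term is bounded by $\|X_0\|_\infty$ uniformly in $(t,x)$ by Hypothesis \ref{hyp_x0} (indeed $\|p_t*X_0\|_\infty\le\|X_0\|_\infty$). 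For $Y_t^{\delta}(x)=\int p_\delta(x-y)X_t^{\delta}(dy)$, since $p_\delta$ is a probability density one gets $Y_t^{\delta}(x)=\int p_{t+\delta}(x-y)X_0(y)dy+\int_0^t\int p_{t+\delta-s}(x-y)\widetilde{\sigma}_{\delta}\sqrt{X_s^{\delta}(y)}\,W(ds,dy)$ by the semigroup property, so $X^\delta$ and $Y^\delta$ satisfy structurally identical formulas with heat kernels $p_{t-s}$ resp.\ $p_{t+\delta-s}$; it therefore suffices to bound both simultaneously. Because $\widetilde\sigma_\delta$ is bounded by $\sqrt{K_0}$ uniformly in $\delta$ (this was noted before Lemma \ref{lmm_tight}), the noise coefficient contributes only a constant.

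The core estimate is via the Burkholder–Davis–Gundy inequality applied to the stochastic integral. For even $n=2m$ (odd moments follow by Jensen/Cauchy–Schwarz from the even ones), BDG and Minkowski's integral inequality give
\[
\EE\big[|X_t^{\delta}(x)|^{n}\big]\le C_n\|X_0\|_\infty^n+C_n\,\EE\Big[\Big(\int_0^t\int_{\RR}p_{t-s}(x-y)^2\,\widetilde{\sigma}_{\delta}^2\,X_s^{\delta}(y)\,dy\,ds\Big)^{m}\Big].
\]
Now bound $\widetilde\sigma_\delta^2\le K_0$, pull one factor $p_{t-s}(x-y)$ out of the square (so $p_{t-s}(x-y)^2\le \|p_{t-s}\|_\infty\, p_{t-s}(x-y)=(2\pi(t-s))^{-1/2}p_{t-s}(x-y)$), and apply Hölder/Minkowski in the measure $p_{t-s}(x-y)\,dy\,ds$ — whose total mass over $[0,t]\times\RR$ is $t$ — to move the $m$-th power inside:
\[
\EE\big[|X_t^{\delta}(x)|^{n}\big]\le C_n\|X_0\|_\infty^n+C_n K_0^m\,t^{m-1}\int_0^t (2\pi(t-s))^{-m/2}\int_{\RR}p_{t-s}(x-y)\,\EE\big[|X_s^{\delta}(y)|^{m}\big]\,dy\,ds.
\]
Since the exponent $m$ appearing on the right is strictly smaller than $n=2m$ (as long as $n\ge 2$; and the cases $n=1,2$ close directly because then the right side involves only $\EE[X_s^\delta(y)]$ or the first moment, which is controlled by total mass $X_0(\RR)$), an induction on $n$ applies: assuming $\sup_{\delta,s\le T,y}\EE[|X_s^\delta(y)|^{m}]=:A_m<\infty$, the time singularity $(t-s)^{-m/2}$ is integrable near $s=t$ precisely when $m<2$, i.e.\ for $m=1$; for $m\ge 2$ I instead keep the two half-powers of $p$ split more carefully (use $p_{t-s}(x-y)^2\le \|p_{(t-s)/2}\|_\infty p_{(t-s)/2}(x-y)\cdot$ const and iterate, or simply use a Gronwall argument in the variable $\sup_y\EE[|X_s^\delta(y)|^n]$ after first establishing finiteness). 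The cleanest route is a two-level induction: prove the bound for $n=1,2$ by the above with a Gronwall lemma handling the (now integrable) kernel, then for general $n$ use the inductive hypothesis on all lower moments to make the right-hand side a finite constant plus $C\int_0^t(t-s)^{-1/2}\sup_y\EE[|X_s^\delta(y)|^n]\,ds$, and close with the singular Gronwall inequality (fractional/Henry–Gronwall). Uniformity in $\delta\in(0,1)$ is automatic because every constant above ($\|X_0\|_\infty$, $K_0$, $T$, BDG constants) is $\delta$-free; uniformity in $x$ is automatic because $\int_\RR p_{t-s}(x-y)\,dy=1$ regardless of $x$.

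The main obstacle is the time singularity $(t-s)^{-m/2}$ that appears if one naïvely extracts the sup-norm of the heat kernel from the squared kernel in the BDG bound: for $n\ge 4$ this is not locally integrable, so a single application of Gronwall fails. The fix is to \emph{not} collapse the full $m$-th power at once but to peel off powers of $X^\delta$ one at a time (keeping $p_{t-s}(x-y)^2$ paired with a single $p$-average in the spatial integral and using the semigroup/Chapman–Kolmogorov identity $\int p_{r}(x-y)p_{r'}(y-z)dy=p_{r+r'}(x-z)$ to redistribute the kernels), which trades the non-integrable $(t-s)^{-m/2}$ for a product of integrable $(t-s)^{-1/2}$-type singularities at the cost of more convolution bookkeeping; equivalently, one first shows all moments are \emph{finite} by a crude Picard-type a priori argument and only then optimizes the constant via iterated application of the $n=2$ estimate. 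This is a standard but slightly delicate computation for super-Brownian-type SPDEs, and I expect it is exactly what the authors carry out in the proof that follows.
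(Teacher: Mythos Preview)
Your route via the mild formulation and BDG is viable but, as written, has a real gap precisely where you flag it: applying Jensen in the measure $p_{t-s}(x-y)\,dy\,ds$ \emph{before} estimating the cross-moment produces the non-integrable kernel $(t-s)^{-m/2}$ for $n=2m\ge 4$, and neither of your two suggested repairs is actually carried out. The Chapman--Kolmogorov/peel-off idea is more machinery than you need. The clean fix is to reverse the order of operations---expand first, then estimate: for $n=2m$,
\[
\EE\Big[\Big(\int_0^t\!\!\int_{\RR}p_{t-s}(x-y)^2X_s^{\delta}(y)\,dy\,ds\Big)^{m}\Big]
=\int_{[0,t]^m}\!\int_{\RR^m}\prod_{i=1}^m p_{t-s_i}(x-y_i)^2\,\EE\Big[\prod_{i=1}^m X_{s_i}^{\delta}(y_i)\Big]\,d\by_m\,d\bs_m,
\]
bound the product moment by $A_m:=\sup_{\delta,s,y}\EE[X_s^{\delta}(y)^m]$ via H\"older, and then use $\int_{\RR}p_r^2=(4\pi r)^{-1/2}$ to get the \emph{factorised} bound $A_m\big(\int_0^t(4\pi(t-s))^{-1/2}ds\big)^m$, which is finite. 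This closes a strong induction on $n$ (for odd $n$ interpolate the non-integer power of the quadratic variation between the two adjacent integer powers, each of which reduces to $A_k$ with $k\le (n+1)/2<n$). Your alternative Gronwall scheme, with $\int_0^t(t-s)^{-1/2}\sup_y\EE[X_s^{\delta}(y)^n]\,ds$ on the right, does \emph{not} fall out of BDG; it would require It\^o's formula on the auxiliary process $(Y^t_s)^n$ as in Section~\ref{sec_mmtfor}, a substantially longer computation.

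The paper takes a completely different path and does not use BDG or Gronwall at all. It applies the moment duality of Proposition~\ref{mmtdual} with $f_0=p_{\delta}(x-\cdot)^{\otimes n}$, obtaining $\EE[Y_t^{\delta}(x)^n]=\EE\big[\langle X_0^{\otimes n_t},f_t^{\delta}\rangle\exp(\tfrac12\int_0^t n_s(n_s-1)\,ds)\big]$, then decomposes over the events $\{n_t=n-k\}$ and bounds each term explicitly: every coalescence $\Phi_{ij}^{\delta}$ contributes one factor $\le (2\pi\tau_i)^{-1/2}$, and since the $\tau_i$ are independent exponentials with known parameters one has $\EE[\tau_i^{-1/2}]\le C\sqrt{(n-i+1)(n-i)}$, giving the closed-form estimate $\EE[Y_t^{\delta}(x)^n]\le c_1c_2^n(n+1)!\,e^{n(n-1)T/2}$. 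The density bound then follows by replacing $p_\delta$ with $p_\epsilon$ and sending $\epsilon\downarrow 0$ via Fatou. So your expectation that the authors run your BDG computation is incorrect: they bypass the singularity issue entirely by working on the dual side.
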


\begin{proof}
We prove this lemma following the idea in Xiong \cite[Lemma 1.4.5]{ws-13-xiong}. By using the moment duality (see Proposition \ref{mmtdual}), we can write
\begin{align}\label{unby1}
\EE [Y_t^{\delta}(x)^{n}]=\EE \big[\big\langle  (X_t^{\delta})^{\otimes n}, p_{\delta}(x-\cdot)^{\otimes n}\big\rangle\big]=\EE \Big[\big\langle  X_0^{\otimes n_t}, f^{\delta}_t\big\rangle \exp \Big(\frac{1}{2}\int_0^t n_s(n_s-1)ds\Big)\Big],
\end{align}
where $(f^{\delta}_t,n_t)$ is the moment dual process of $X^{\delta}_t$ as in Section \ref{sec_dual} with $f_0^{\delta}=p_{\delta}(x-\cdot)^{\otimes n}$ and $n_0=n$. Notice that the exponential term in \eqref{unby1} is bounded by $e^{\frac{1}{2}n(n-1)T}$. It suffices to estimate the following quantity
\begin{align*}
\EE \big[\big\langle  X_0^{\otimes n_t}, f^{\delta}_t\big\rangle\big] =&\sum_{k=0}^{n-1}\EE \Big[\big\langle  X_0^{\otimes (n-k)}, f^{\delta}_t\big\rangle \1_{\{n_t=n-k\}}\Big]=\sum_{k=0}^{n-1}\EE \Big[\big\langle X_0^{\otimes (n-k)}, f^{\delta}_t\big\rangle \1_{\{\eta_{k}\leq t<\eta_{k+1}\}}\Big].
\end{align*}
If $k=0$, by using the semigroup property of the heat kernel and the nonnegativity and symmetry in space  variables of $f^{\delta}$, we can easily show that
\begin{align*}
\EE \Big[\big\langle X_0^{\otimes n}, &f^{\delta}_t\big\rangle \1_{\{\eta_{0}\leq t<\eta_{1}\}}\Big] =
\int_{\RR^n}  X_0^{\otimes n}(d \by_n) T_{t}^{\otimes n}f_0(\by_n)  \EE [ \1_{\{\tau_{1}> t\}}]\\
=&\int_{\RR^n}  X_0^{\otimes n}(d \by_n) \int_{\RR^n} d\bz_n p_t^{(n)}(\by_n-\bz_n) p_{\delta}^{(n)}(x^{\otimes n}-\bz_n)  e^{-\frac{n(n-1)}{2}t}\\
=&\langle X_0, p_{t+\delta}(x-\cdot)\rangle^n e^{-\frac{n(n-1)}{2}t}\leq \|X_0\|_{\infty}^n,
\end{align*}
where $x^{\otimes n}$ denotes a point on $\RR^n$ with each coordinate being $x$ and $p_t^{(n)}(\bx_n-\by_n):=\prod_{i=1}^np_t(x_i-y_i)$. Similarly, if $k=1$, we deduce that
\begin{align*}
&\EE \Big[\big\langle X_0^{\otimes (n-1)}, f^{\delta}_t\big\rangle \1_{\{\eta_1\leq t<\eta_2\}}\Big]\\
=&\EE \Big[\int_{\RR^{n-1}}  X_0^{\otimes (n-1)}(d \by_{n-1}) \big(T_{t-\eta_1}^{n-1}S_{n}^{\delta}(\eta_{1})f^{\delta}_{\eta_{1}^-}\big)(\by_{n-1})  \1_{\{\eta_{1}\leq t<\eta_{2}\}}\Big]\\
\leq &\|\sigma\|_{\infty}^2\EE \Big[\int_{\RR^{n-1}}  X_0^{\otimes (n-1)}(d \by_{n-1})\int_{\RR^{n-1}}d\bz_{n-1} p_{t-\eta_{1}}^{(n-1)}(\by_{n-1} - \bz_{n-1})\\
&\times \sum_{1\leq i<j\leq n}\frac{2(\varphi_{ij}f^{\delta}_{\eta_{1}^-})(\bz_{n-1})}{n(n-1)} \1_{\{\eta_{1}\leq t<\eta_{2}\}}\Big].
\end{align*}
Firstly, by the symmetry of $p^{(n)}_{t}(x^{\otimes n}-\cdot)$, we can write
\begin{align*}
&\sum_{1\leq i<j\leq n}\EE \Big[\int_{\RR^{2n-2}}  X_0^{\otimes (n-1)}(d \by_{n-1})d\bz_{n-1} p_{t-\eta_{1}}^{(n-1)}(\by_{n-1} - \bz_{n-1}) \frac{2(\varphi_{ij}f^{\delta}_{\eta_{1}^-})(\bz_{n-1})}{n(n-1)} \\
&\hspace{17mm}\times \1_{\{\eta_{1}\leq t<\eta_{2}\}}\Big]\\
= &\EE \Big[\int_{\RR^{2n-2}}  X_0^{\otimes (n-1)}(d \by_{n-1})d\bz_{n-1} p_{t-\eta_{1}}^{(n-1)}(\by_{n-1} - \bz_{n-1}) \big(\varphi_{12}p_{\eta_{1}+\delta}^{(n)}(x^{\otimes n}-\cdot)\big)(\bz_{n-1})\\
&\quad\times \1_{\{\eta_{1}\leq t<\eta_{2}\}}\Big].
\end{align*}
We estimate the above integral using the semigroup property of the heat kernel and the fact that $p_t(x)\leq (2\pi t)^{-\frac{1}{2}}$, and get
\begin{align*}
&\EE \Big[\int_{\RR^{2n-2}}  X_0^{\otimes (n-1)}(d \by_{n-1})d\bz_{n-1} p_{t-\eta_{1}}^{(n-1)}(\by_{n-1} - \bz_{n-1}) \big(\varphi_{12}p_{\eta_{1}+\delta}^{(n)}(x^{\otimes n}-\cdot)\big)(\bz_{n-1})\\
&\quad \times \1_{\{\eta_{1}\leq t<\eta_{2}\}}\Big]\\
=&\EE \Big[\big\langle X_0^{\otimes (n-2)}, p_{t+\delta}^{(n-2)}(x^{\otimes n-2}-\cdot) \big\rangle \int_{\RR^2}X_0(dy)dz p_{t-\eta_1}(y-z) p_{\eta_1+\delta}(x-z)^2\1_{\{\eta_{1}\leq t<\eta_{2}\}}\Big]\\
\leq & \langle X_0, p_{t+\delta}(x-\cdot) \rangle^{n-1}\EE \big[(2\pi (\eta_1+\delta))^{-\frac{1}{2}}\1_{\{\eta_{1}\leq t<\eta_{2}\}}\big].
\end{align*}
Recall that by definition $\eta_1=\tau_1$. As a consequence, we have
\[
\EE \Big[\big\langle X_0^{\otimes (n-1)}, f^{\delta}_t\big\rangle \1_{\{\eta_1\leq t<\eta_2\}}\Big] \leq c_1\|X_0\|_{\infty}^{n-1}\|\sigma\|_{\infty}^2\EE  [\tau_1^{-\frac{1}{2}}].
\]
For general $k$,  similar to the case $k=1$ one can find   by iteration that
\begin{align*}
\EE \Big[\big\langle X_0^{\otimes k}, f^{\delta}_t\big\rangle \1_{\{\eta_{k}\leq t<\eta_{k+1}\}}\Big]
\leq c_1 c_2^n \|\sigma\|_{\infty}^{2(n-k)}\EE \bigg[\prod_{i=1}^{n-k}\tau_{i}^{-\frac{1}{2}}\bigg],
\end{align*}
for some universal constants $c_1,c_2>0$. Notice that $\{\tau_k:k=1,\dots,n-1\}$ are independent exponential random variables with parameter $\frac{1}{2}(n-k+1)(n-k)$ respectively. Furthermore, for any $\lambda>0$, we can show that
\begin{align*}
\lambda\int_0^{\infty}s^{-\frac{1}{2}} e^{-\lambda s}ds\leq \lambda\Big(\int_0^{\epsilon}s^{-\frac{1}{2}} ds+\epsilon^{-\frac{1}{2}}\int_{\epsilon}^{\infty} e^{-\lambda s}ds\Big)=2\lambda \epsilon^{\frac{1}{2}}+\epsilon^{-\frac{1}{2}}e^{-\lambda\epsilon}
\end{align*}
for all $\epsilon>0$. Choosing $\epsilon=\lambda^{-1}$, we get $\lambda\int_0^{\infty}s^{-\frac{1}{2}} e^{-\lambda s}ds\leq 3\lambda^{\frac{1}{2}}$. It follows that
\begin{align*}
\EE \Big[\big\langle X_0^{\otimes k}, f^{\delta}_t\big\rangle \1_{\{\eta_{k}\leq t<\eta_{k+1}\}}\Big]
\leq c_1 c_2^n \|\sigma\|_{\infty}^{2(n-k)} \prod_{i=k}^{n-1}(i(i+1))^{\frac{1}{2}}\leq c_1c_2^n n!,
\end{align*}
which implies that
\begin{align}\label{unby2}
\EE \big[\big\langle  X_0^{\otimes n_t}, f^{\delta}_t\big\rangle \big] \leq c_1c_2^n(n+1)!.
\end{align}
Combining \eqref{unby1} and \eqref{unby2}, and observing that $n_t$ is a decreasing process, we have
\begin{align}\label{unby3}
\sup_{\delta\in (0,1)}\sup_{(t,x)\in [0,T]\times \RR}\EE [Y_t^{\delta}(x)^{n}]\leq c_1c_2^n(n+1)!e^{\frac{1}{2}n(n-1)}<\infty.
\end{align}
In the next step, we replace $p_{\delta}$ by $p_{\epsilon}$ in \eqref{unby1} and construct the moment dual process $f^{\delta,\epsilon}$ with $f^{\delta,\epsilon}_0=p_{\epsilon}$. By the same argument, we get
\begin{align*}
\sup_{\delta\in (0,1)}\sup_{(t,x)\in \RR_+\times \RR}\sup_{\epsilon>0}\EE \big[\big\langle (X_t^{\delta})^{\otimes n}, p_{\epsilon}(x-\cdot)\big\rangle\big] < \infty.
\end{align*} 
Thus, by Fatou's lemma and the fact that $\lim_{\epsilon\downarrow 0} \big\langle (X_t^{\delta})^{\otimes n}, p_{\epsilon}(x-\cdot)^{\otimes n}\big\rangle=X_t^{\delta}(x)^{n}$ for almost every $x\in \RR$ almost surely, we get
\begin{align}\label{unbx1}
\sup_{\delta\in (0,1)}\sup_{(t,x)\in [0,T]\times \RR}\EE [X_t^{\delta}(x)^{n}]<\infty,
\end{align}
as well. The proof of this lemma is complete by combining inequalities \eqref{unby3} and \eqref{unbx1}.
\end{proof}
In the next lemma, we provide the estimates for time and spatial increments of $Y^{\delta}$ that will be used in the proof of H\"{o}lder continuity of $Y^{\delta}$.

\begin{lemma}\label{lmm_hdyd}
Let $(X^{\delta},Y^{\delta})$ be a weak solution to \eqref{sbm1} with initial condition $X_0\in \cM_F(\RR)$ satisfying Hypothesis \ref{hyp_x0}. Then for any $\alpha\in (0,1)$ and $n\geq 1$, there exists a constant $C>0$ such that
\begin{align}\label{sicr1}
\EE \big[ |Y^{\delta}_t(x)-Y^{\delta}_t(y)|^{2n}\big]\leq C (|x-y|^{n\alpha}\vee |x-y|^n)
\end{align}
and
\begin{align}\label{sicr2}
\EE \big[ |Y^{\delta}_t(x)-Y^{\delta}_s(x)|^{2n}\big]\leq C |t-s|^{\frac{1}{2}n}
\end{align}
for all $0\leq s<t\leq T$, $x,y\in \RR$ and $\delta\in (0,1)$.
\end{lemma}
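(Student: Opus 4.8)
The plan is to base everything on the Green-function (mild) representation of $Y^{\delta}$, decompose it into a deterministic part and a stochastic part, and estimate each separately; the workhorses will be the uniform moment bound of Lemma \ref{lmm_ubn}, the boundedness of $\widetilde{\sigma}_{\delta}$, the Burkholder--Davis--Gundy (BDG) inequality together with Minkowski's integral inequality, and a handful of elementary heat-kernel identities, while keeping every constant uniform in $\delta\in(0,1)$. Concretely, since $(X^{\delta},Y^{\delta})$ solves \eqref{sbm1}, the density has the mild form $X^{\delta}_t(x)=(p_t*X_0)(x)+\int_0^t\int_{\RR}p_{t-s}(x-y)\widetilde{\sigma}_{\delta}(s,y,\PP_{Y^{\delta}_s})\sqrt{X^{\delta}_s(y)}\,W(ds,dy)$, and convolving with $p_{\delta}$ and using $p_{\delta}*p_u=p_{u+\delta}$ gives
\[
Y^{\delta}_t(x)=(p_{t+\delta}*X_0)(x)+\int_0^t\int_{\RR}p_{t-s+\delta}(x-y)\,\widetilde{\sigma}_{\delta}(s,y,\PP_{Y^{\delta}_s})\,\sqrt{X^{\delta}_s(y)}\,W(ds,dy)=:J^{\delta}_0(t,x)+J^{\delta}(t,x),
\]
where $J^{\delta}_0$ is deterministic and, under any of Hypotheses \ref{hyp_0}, \ref{hyp_fn}, \ref{hyp_ifn}, the map $(s,y)\mapsto\widetilde{\sigma}_{\delta}(s,y,\PP_{Y^{\delta}_s})$ is a deterministic function bounded by $\|\sigma\|_{\infty}$.

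For the deterministic part I would use only that $J^{\delta}_0(t,\cdot)=p_{t+\delta}*X_0$ has spatial gradient $p_{t+\delta}*\nabla X_0$ with $\|p_{t+\delta}*\nabla X_0\|_2\le\|\nabla X_0\|_2\le\|X_0\|_{1,2}$ (Hypothesis \ref{hyp_x0}), hence is $\frac12$-H\"{o}lder in space with seminorm $\le\|X_0\|_{1,2}$. This immediately yields $|J^{\delta}_0(t,x)-J^{\delta}_0(t,y)|\le\|X_0\|_{1,2}|x-y|^{1/2}$, and, writing $J^{\delta}_0(t,x)-J^{\delta}_0(s,x)=\int_{\RR}p_{t-s}(w)\{(p_{s+\delta}*X_0)(x-w)-(p_{s+\delta}*X_0)(x)\}dw$, also $|J^{\delta}_0(t,x)-J^{\delta}_0(s,x)|\le\|X_0\|_{1,2}\int_{\RR}p_{t-s}(w)|w|^{1/2}dw\le C|t-s|^{1/4}$. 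Raising to the $2n$-th power, $J^{\delta}_0$ contributes $\le C|x-y|^n\le C(|x-y|^{n\alpha}\vee|x-y|^n)$ to \eqref{sicr1} and $\le C|t-s|^{n/2}$ to \eqref{sicr2}.

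For the stochastic part, BDG (applied, for fixed $t$, to the martingale $r\mapsto\int_0^r\int_{\RR}[\cdots]W$) followed by Minkowski's integral inequality (the spatial kernels being nonnegative) reduces matters, after using $\|\widetilde{\sigma}_{\delta}^2 X^{\delta}_s(z)\|_{L^n(\Omega)}\le\|\sigma\|_{\infty}^2\sup_{\delta,s,z}(\EE[X^{\delta}_s(z)^n])^{1/n}<\infty$ from Lemma \ref{lmm_ubn}, to heat-kernel integrals. For the spatial increment one needs $\int_0^t\int_{\RR}[p_{t-s+\delta}(x-z)-p_{t-s+\delta}(y-z)]^2dz\,ds$; here $\int_{\RR}[p_u(x-z)-p_u(y-z)]^2dz=2(p_{2u}(0)-p_{2u}(x-y))=Cu^{-1/2}(1-e^{-c|x-y|^2/u})\le C|x-y|^{2\theta}u^{-1/2-\theta}$ (using $1-e^{-a}\le a^{\theta}$), and $\int_0^t(t-s+\delta)^{-1/2-\theta}ds=\int_{\delta}^{t+\delta}u^{-1/2-\theta}du\le\int_0^{T+1}u^{-1/2-\theta}du<\infty$ for any $\theta\in(0,1/2)$; taking $\theta=\alpha/2$ gives $\EE|J^{\delta}(t,x)-J^{\delta}(t,y)|^{2n}\le C|x-y|^{n\alpha}$, which with the deterministic bound proves \eqref{sicr1}. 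For the time increment, split $J^{\delta}(t,x)-J^{\delta}(s,x)=A_1+A_2$, with $A_2$ the stochastic integral over $(s,t]\times\RR$ of $p_{t-r+\delta}(x-z)\widetilde{\sigma}_{\delta}\sqrt{X^{\delta}_r(z)}$ and $A_1$ the one over $(0,s]\times\RR$ of $[p_{t-r+\delta}(x-z)-p_{s-r+\delta}(x-z)]\widetilde{\sigma}_{\delta}\sqrt{X^{\delta}_r(z)}$. The same reduction gives $\EE|A_2|^{2n}\le C(\int_{\delta}^{t-s+\delta}u^{-1/2}du)^n\le C|t-s|^{n/2}$ and $\EE|A_1|^{2n}\le C(\int_{\delta}^{s+\delta}[(2v+2h)^{-1/2}-2(2v+h)^{-1/2}+(2v)^{-1/2}]dv)^n$ with $h=t-s$; the integrand is the nonnegative (convex) second difference of $v\mapsto(2v)^{-1/2}$, hence $\le C\min(v^{-1/2},h^2v^{-5/2})$, so its integral is $\le\int_0^h v^{-1/2}dv+\int_h^{\infty}h^2v^{-5/2}dv\le Ch^{1/2}$, giving $\EE|A_1|^{2n}\le C|t-s|^{n/2}$. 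Combining the three pieces proves \eqref{sicr2}.

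The step I expect to be the \emph{main obstacle} is getting the sharp time-increment exponent $|t-s|^{n/2}$ (equivalently $\frac14$-H\"{o}lder continuity in $L^{2n}(\Omega)$) for $A_1$: the cheap estimate that bounds $A_1$ through the $\frac12$-H\"{o}lder spatial regularity of $J^{\delta}(s,\cdot)$ only yields an exponent strictly below $n/2$, so one must carry out the precise second-difference computation for $\int_{\RR}[p_{v+h}(x-z)-p_v(x-z)]^2dz=p_{2v+2h}(0)-2p_{2v+h}(0)+p_{2v}(0)$ and integrate it carefully in $v$, all while keeping the bound independent of $\delta$ — which is automatic here because every time integral that appears has the form $\int_{\delta}^{T+\delta}u^{-\beta}du\le\int_0^{T+1}u^{-\beta}du$ with $\beta<1$.
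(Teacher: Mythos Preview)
Your proof is correct and follows essentially the same route as the paper: write $Y^{\delta}$ via the mild formulation as $(p_{t+\delta}*X_0)+\text{stochastic convolution}$, split into deterministic and stochastic parts, and control the latter with BDG, the uniform moment bound of Lemma~\ref{lmm_ubn}, and heat-kernel increment estimates. The only cosmetic differences are that the paper handles the deterministic spatial increment by integration by parts and Cauchy--Schwarz (rather than Morrey/Sobolev embedding) and simply cites the heat-kernel regularity lemmas $\int_0^t\int_{\RR}|p_s(x-z)-p_s(y-z)|^2\,dz\,ds\le C|x-y|^{\alpha}$ and $\int_0^s\int_{\RR}|p_{t-r}(x)-p_{s-r}(x)|^2\,dx\,dr\le C|t-s|^{1/2}$ from Xiong, whereas you compute them explicitly (including the second-difference argument for $A_1$); your extra detail there is sound and fills in exactly what the paper omits.
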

\begin{proof}
We follow the ideas in Konno and Shiga \cite[Lemma 2.8]{ptrf-88-konno-shiga}. Write $X^{\delta}_{t}(x)$ in the mild formulation,
\[
X^{\delta}_t(x)=\int_{\RR}p_{t}(x-y)X_0(y)dy+\int_0^t\int_{\RR}p_{t-s}(x-y)\widetilde{\sigma}_{ \delta}(s,y,\PP_{Y^{\delta}_s})\sqrt{X^{\delta}_s(y)}W(ds,dy).
\]
Then, for any $t\in [0,T]$ and $x,y\in \RR$, using the semi-group property of the heat kernel and Burkholder-Davis-Gundy's inequality, we can write
\begin{align*}
\EE &\big[|Y^{\delta}_t(x)-Y^{\delta}_t(y)|^{2n}\big] =\EE \bigg[\Big|\int_{\RR}(p_{\delta}(x-z)-p_{\delta}(y-z))X^{\delta}_t(z)dz\Big|^{2n}\bigg]\\
=&\EE \bigg[ \Big|\int_{\RR}dz(p_{\delta}(x-z)-p_{\delta}(y-z))\int_{\RR}dz'p_t(z-z')X_0(z')\\
&+\int_{\RR}dz(p_{\delta}(x-z)-p_{\delta}(y-z))\int_0^t\int_{\RR}p_{t-s}(z-z')\widetilde{\sigma}_{ \delta}(s,z',\PP_{Y^{\delta}_s})\sqrt{X_s^{\delta}(z')}W(ds,dz')\Big|^{2n}\bigg]\\
\leq &c_1 \(I_1+I_2\),
\end{align*} 
where
\[
I_1= \Big|\int_{\RR}dz(p_{t+\delta}(x-z)-p_{t+\delta}(y-z))X_0(z)\Big|^{2n}
\]
and
\[
I_2=\EE\bigg[\Big|\int_0^tds\int_{\RR}dz(p_{t-s+\delta}(x-z)-p_{t-s+\delta}(y-z))^2\widetilde{\sigma}_{ \delta}(s,z',\PP_{Y^{\delta}_s})^2X_s^{\delta}(z)\Big|^{n}\bigg].
\]
The first term is easy to handle. In fact, using Fubini's theorem, the integration by parts formula and Cauchy-Schwarz's inequality, one can show that
\begin{align}\label{sicr11}
I_1= &\Big|\int_{\RR}dz\int_{y}^xd\xi \nabla p_{t+\delta}(\xi-z) X_0(z)\Big|^{2n}=\Big|\int_{\RR}dz\int_{y}^x d\xi p_{t+\delta}(\xi-z) \nabla X_0(z)\Big|^{2n}\nonumber\\
\leq &\Big[\int_{\RR}dz\Big(\int_{y}^x d\xi p_{t+\delta}(\xi-z)\Big)^2\Big]^{n}\Big[\int_{\RR}dz |\nabla X_0(z)|^2\Big]^n\nonumber\\
\leq  &\|X_0\|_{1,2}^{2n}\Big[\int_{\RR}dz\int_{y}^x d\xi \int_{\RR}d\xi'p_{t+\delta}(\xi-z)p_{t+\delta}(\xi'-z)\Big]^{n}=\|X_0\|_{1,2}^{2n}|x-y|^n.
\end{align}
For the second term, by Lemma \ref{lmm_ubn}, Cauchy-Schwarz's inequality and the H\"{o}lder continuity of the heat kernel in space, namely (c.f. Xiong \cite[Lemma 1.4.4]{ws-13-xiong}), for any $\alpha\in(0,1)$,
\[
\int_0^tds\int_{\RR}dz|p_{s}(x-z)-p_{s}(y-z)|^2\leq C|x-y|^{\alpha},
\] 
we have
\begin{align}\label{sicr12}
I_2\leq &C\EE \bigg[\Big|\int_0^tds\int_{\RR}dz'(p_{t-s+\delta}(x-z')-p_{t-s+\delta}(y-z'))^2\widetilde{\sigma}_{ \delta}(s,z',\PP_{Y^{\delta}_s})^2X_s^{\delta}(z')\Big|^n \bigg]\\
\leq & C \sup_{\delta\in (0,1)}\sup_{(t,x)\in [0,T]\times \RR}\EE \big[|X_t^{\delta}(x)|^n \big]\|\sigma\|_{\infty}^{2n}\nonumber\\
&\times \Big(\int_0^tds\int_{\RR}dz'|p_{t-s+\delta}(x-z')-p_{t-s+\delta}(y-z')|^2\Big)^n\nonumber\\
\leq & C |x-y|^{n\alpha},\nonumber
\end{align}
where the last inequality is due to the H\"{o}lder regularity for the heat kernel. Therefore, inequality \eqref{sicr1} is a consequence of \eqref{sicr11} and \eqref{sicr12}. The proof of \eqref{sicr2} is very similar, but using H\"{o}lder regularity of the heat kernel in time, namely,
\[
\int_0^sdr\int_{\RR}dx|p_{t-r}(x)-p_{s-r}(x)|^2\leq C|t-s|^{\frac{1}{2}}.
\]
For the sake of brevity, we omit the remainder of proof of \eqref{sicr2}. The proof of this lemma is therefore complete. 
\end{proof}

\begin{lemma}\label{coro}
Assume Hypothesis \ref{hyp_x0}. Then for any $\delta>0$, the following properties hold.
\begin{enumerate}[(i)]
\item  Fix $x \in \RR$.
There exists a nonnegative random variable $Z_{\delta}(x)\in L^1(\Omega)$ such that $\sup_{\delta\in (0,1)} \EE [Z_{\delta}(x)]<\infty$ and 
\begin{align}\label{coro1}
|Y^{\delta}_t(x)-Y^{\delta}_s(x)|\leq Z_{\delta}(x)^{\frac{1}{8}}|t-s|^{\frac{1}{16}}
\end{align}
for all $s,t\in [0,T]$.

\item  Fix $R>0$. There exists a nonnegative random variable $Z_{\delta, R}'$ such that $ \EE [Z_{\delta, R}']$ is uniformly bounded in $\delta\in (0,1)$, and
\begin{align}\label{coro2}
|Y^{\delta}_t(x)-Y^{\delta}_t(y)-Y^{\delta}_s(x)+Y^{\delta}_s(y)|\leq (Z_{\delta,R}')^{\frac{1}{16}}|t-s|^{\frac{1}{32}}|x-y|^{\frac{1}{32}}
\end{align}
for all $s,t\in[0,T]$ and $x,y\in [-R, R]$.
\end{enumerate} 
\end{lemma}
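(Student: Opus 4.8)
The plan is to obtain \eqref{coro1} and \eqref{coro2} from the moment bounds of Lemma \ref{lmm_hdyd} via a Kolmogorov-type continuity argument, with the twist that the random H\"older "constants'' must have $L^1$-norms bounded \emph{uniformly in $\delta\in(0,1)$}. First I would fix $x\in\RR$ and apply the quantitative Kolmogorov--Chentsov theorem on the interval $[0,T]$ to the one-parameter process $t\mapsto Y_t^\delta(x)$. From \eqref{sicr2} we have $\EE[|Y_t^\delta(x)-Y_s^\delta(x)|^{2n}]\le C|t-s|^{n/2}$ with $C$ independent of $\delta$; writing this as $|t-s|^{1+\beta}$ with $\beta=n/2-1$ and H\"older exponent strictly below $\beta/(2n)=\tfrac14-\tfrac1{2n}$, choosing $n=4$ gives exponent $\tfrac1{16}$ after a small margin, and the standard proof produces a random variable $Z_\delta(x)$ (essentially a Besov-type seminorm, e.g. $Z_\delta(x)=\int_0^T\int_0^T |Y_u^\delta(x)-Y_v^\delta(x)|^{8}/|u-v|^{?}\,du\,dv$ for the appropriate exponent) with $\EE[Z_\delta(x)]\le C'$ uniformly in $\delta$, such that \eqref{coro1} holds. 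The key point is simply that the constant $C$ in \eqref{sicr2} does not depend on $\delta$, so neither does the resulting bound on $\EE[Z_\delta(x)]$.

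For part (ii) I would run the two-parameter Kolmogorov--Chentsov argument on the rectangle $[0,T]\times[-R,R]$ applied to the increment functional $\Phi^\delta(s,t,x,y):=Y_t^\delta(x)-Y_t^\delta(y)-Y_s^\delta(x)+Y_s^\delta(y)$. The needed moment estimate is a ``mixed'' bound of the form
\[
\EE\big[|Y_t^\delta(x)-Y_t^\delta(y)-Y_s^\delta(x)+Y_s^\delta(y)|^{2n}\big]\le C\,|t-s|^{\gamma_1 n}\,|x-y|^{\gamma_2 n},
\]
which I would derive by revisiting the mild-formulation computation in the proof of Lemma \ref{lmm_hdyd}: split $\Phi^\delta$ into a deterministic part coming from $X_0$ and a stochastic-integral part, then estimate each by interpolating between the pure-time bound \eqref{sicr2} and the pure-space bound \eqref{sicr1} — concretely, bounding the relevant heat-kernel difference $\big(p_{t-r+\delta}(x-\cdot)-p_{t-r+\delta}(y-\cdot)\big)-\big(p_{s-r+\delta}(x-\cdot)-p_{s-r+\delta}(y-\cdot)\big)$ by the geometric mean of its ``only in $x$'' and ``only in $t$'' bounds, using $|a-b|\le 2|a|^{\theta}|b|^{1-\theta}$ type inequalities and H\"older on the $dr\,dz$ integral. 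With $\gamma_1,\gamma_2$ slightly below $\tfrac14$ and $n$ chosen large enough (again $n=4$ suffices), the two-parameter Kolmogorov lemma yields a random variable $Z_{\delta,R}'$ with $\sup_{\delta\in(0,1)}\EE[Z_{\delta,R}']<\infty$ and the H\"older estimate \eqref{coro2} with exponents $\tfrac1{32}$ in each variable; the exponents $\tfrac1{16}$, $\tfrac1{32}$ in the statement are deliberately conservative to absorb the loss in Kolmogorov's theorem and in the interpolation.

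The main obstacle is the mixed space--time increment estimate needed for (ii): unlike \eqref{sicr1} and \eqref{sicr2}, this requires simultaneously controlling a second difference of heat kernels in both arguments, and one must be careful that the H\"older-in-space regularity of $p$ (used in Lemma \ref{lmm_hdyd}) and the $\tfrac12$-H\"older-in-time regularity combine multiplicatively after splitting the time integral at $s$ (so that for $r\in(s,t)$ only the $t$-terms survive, contributing the time factor, while for $r<s$ one differences in time first). A secondary, more bookkeeping-type obstacle is making the random variables $Z_\delta(x)$ and $Z_{\delta,R}'$ genuinely $\delta$-independent in expectation: this reduces to tracking that every constant entering the Kolmogorov argument (from \eqref{sicr1}, \eqref{sicr2}, the heat-kernel H\"older bounds, and Lemma \ref{lmm_ubn}) is $\delta$-uniform, which it is by construction. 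Everything else — the passage from moment bounds to a.s.\ H\"older continuity with an integrable random constant — is the classical Garsia--Rodemich--Rumsey / Kolmogorov--Chentsov machinery, which I would invoke rather than reprove, referencing the multi-parameter version already cited in the paper (e.g. \cite[Appendices A.2 and A.3]{ap-17-hu-huang-le-nualart-tindel}).
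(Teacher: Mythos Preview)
Your overall plan is correct and matches the paper's: apply Garsia--Rodemich--Rumsey (the paper cites Hu and Le \cite{spa-13-hu-le} for the two-parameter version) to the moment bounds of Lemma \ref{lmm_hdyd}, and track that all constants are uniform in $\delta$. Part (i) is exactly as you describe.

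For part (ii), however, you are making the ``main obstacle'' much harder than it is. There is no need to revisit the mild formulation or to interpolate heat-kernel differences. The rectangular increment $\Phi^{\delta}=Y^{\delta}_t(x)-Y^{\delta}_t(y)-Y^{\delta}_s(x)+Y^{\delta}_s(y)$ already satisfies \emph{both} one-variable bounds directly from Lemma \ref{lmm_hdyd} via the triangle inequality: grouping the four terms as $(Y^{\delta}_t(x)-Y^{\delta}_t(y))-(Y^{\delta}_s(x)-Y^{\delta}_s(y))$ gives $\EE[|\Phi^{\delta}|^{2n}]\leq C(|x-y|^{n\alpha}\vee|x-y|^{n})$ from \eqref{sicr1}, while grouping them as $(Y^{\delta}_t(x)-Y^{\delta}_s(x))-(Y^{\delta}_t(y)-Y^{\delta}_s(y))$ gives $\EE[|\Phi^{\delta}|^{2n}]\leq C|t-s|^{n/2}$ from \eqref{sicr2}. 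Now simply use the trivial interpolation: if a nonnegative quantity is bounded by both $A$ and $B$, it is bounded by $A^{\gamma}B^{1-\gamma}$ for any $\gamma\in(0,1)$. This yields
\[
\EE\big[|\Phi^{\delta}|^{2n}\big]\leq C\,(|x-y|^{n\alpha}\vee|x-y|^{n})^{\gamma}\,|t-s|^{(1-\gamma)n/2},
\]
which is exactly the mixed bound you need (the paper takes $\alpha=3/4$, $\gamma=1/2$, $n=8$). Then define $Z'_{\delta,R}$ as the GRR double integral with $\Psi(u)=|u|^{16}$ and $\rho_i(u,v)=|u-v|^{5/32}$, check $\EE[Z'_{\delta,R}]<\infty$ uniformly in $\delta$ by Fubini and the mixed moment bound, and conclude. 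Your heat-kernel route would also work, but it is unnecessary labor; what you flagged as the main obstacle is in fact a one-line observation.
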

\begin{proof}
The proof of this lemma is based on (multi-parameter) Garsia--Rodemich--Rumsey's inequality (c.f. Hu \cite[Theorem 2.1]{ws-16-hu} and Hu and Le \cite[Theorem 2.3]{spa-13-hu-le}). We only provide the proof of inequality \eqref{coro2}. The proof of  \eqref{coro1} can be done similarly. Due to Lemma \ref{lmm_hdyd}, we can write
\begin{align*}
&\EE \big[|Y^{\delta}_t(x)-Y^{\delta}_t(y)-Y^{\delta}_s(x)+Y^{\delta}_s(y)|^{2n}\big]\\
\leq& C\Big[\EE \big[|Y^{\delta}_t(x)-Y^{\delta}_t(y)|^{2n}\big]+\EE\big[ |Y^{\delta}_s(x)-Y^{\delta}_s(y)|^{2n}\big]\Big]\\
\leq &C(|x-y|^{n\alpha}\vee|x-y|^{n})
\end{align*}
and
\begin{align*}
&\EE \big[|Y^{\delta}_t(x)-Y^{\delta}_t(y)-Y^{\delta}_s(x)+Y^{\delta}_s(y)|^{2n}\big]\\
\leq& C\Big[\EE \big[|Y^{\delta}_t(x)-Y^{\delta}_s(x)|^{2n}\big]+\EE\big[ |Y^{\delta}_t(y)-Y^{\delta}_s(y)|^{2n}\big]\Big]\\
\leq &C|t-s|^{\frac{1}{2}n}
\end{align*}
for some $\alpha\in (0,1)$ and $C$ depending on $\alpha$. It follows that for any $\gamma \in(0,1)$,
\begin{align}\label{ieq_rec1}
&\EE \big[|Y^{\delta}_t(x)-Y^{\delta}_t(y)-Y^{\delta}_s(x)+Y^{\delta}_s(y)|^{2n}\big]\nonumber\\
\leq& C\Big[\EE \big[|Y^{\delta}_t(x)-Y^{\delta}_t(y)|^{2n}\big]+\EE\big[ |Y^{\delta}_s(x)-Y^{\delta}_s(y)|^{2n}\big]\Big]\nonumber\\
\leq &C(|x-y|^{n\alpha}\vee|x-y|^{n})^{\gamma}|t-s|^{\frac{1}{2}(1-\gamma)n}.
\end{align}
In order to apply Garsia--Rodemich--Rumsey's inequality, we choose $\alpha=\frac{3}{4}$ and $\gamma=\frac{1}{2}$, let $\Psi:\RR\to \RR_+$, $\rho_1:[0,T]^2\to \RR_+$ and $\rho_2:\RR^2\to \RR_+$ be given by
\[
\Psi(x)=|x|^{16}, \rho_1(t,s)=|t-s|^{\frac{5}{32}}\ \mathrm{and}\ \rho_2(x,y)=|x-y|^{\frac{5}{32}}
\]
respectively, and let
\begin{align*}
Z_{\delta,R}'=& \int_{[0,T]^2}dsdt\int_{[-R,R]^2}dxdy \Psi\Big(\frac{|Y^{\delta}_t(x)-Y^{\delta}_t(y)-Y^{\delta}_s(x)+Y^{\delta}_s(y)|}{\rho_1(s,t)\rho_2(x,y)}\Big).
\end{align*}
Then, by Fubini's theorem for nonnegative functions and inequality \eqref{ieq_rec1}, we can show that
\begin{align*}
\EE [Z_{\delta,R}']\leq &C\int_{[0,T]^2}dsdt\int_{[-R,R]^2}dxdy (|x-y|^{\frac{1}{2}}\vee|x-y|^{\frac{3}{2}})|t-s|^{-\frac{1}{2}},
\end{align*}
that is uniformly bounded in $\delta>0$. On the other hand as a consequence of Hu and Le \cite[Theorem 2.3]{spa-13-hu-le}, we have
\begin{align*}
|Y^{\delta}_t(x)-Y^{\delta}_t(y)-Y^{\delta}_s(x)+Y^{\delta}_s(y)|\leq &c_1\int_0^{|t-s|}\rho_1(du)\int_{0}^{|x-y|}\rho_2(dv)\Psi^{-1}\Big(\frac{c_2Z_{\delta,R}'}{u^2v^2}\Big)\\
\leq &C(Z_{\delta,R}')^{\frac{1}{16}}|t-s|^{\frac{1}{32}}|x-y|^{\frac{1}{32}}.
\end{align*}
Here, the constant $C$ is independent of $s,t,x$ and $y$, and thus can be absorbed into the random variable $Z_{\delta,R}'$. The proof of this lemma is complete.
\end{proof}

\begin{proposition}\label{proptt}
Assume Hypothesis \ref{hyp_x0}. Then, $\{Y^{\delta}\}_{\delta>0}$ is a tight sequence in $C([0,T]\times \RR;\RR)$.
\end{proposition}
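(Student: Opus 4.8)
The plan is to verify, for the family of laws of $\{Y^{\delta}\}_{\delta\in(0,1)}$ on $C([0,T]\times\RR;\RR)$, the three conditions of the precompactness criterion of Lemma~\ref{lmm_tt}; since the only use of this proposition in the sequel is the passage $\delta\downarrow 0$, it is enough to argue for $\delta\in(0,1)$, where the moment and regularity estimates of Lemmas~\ref{lmm_ubn}, \ref{lmm_hdyd} and~\ref{coro} are all available.

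Conditions \ref{lmtt1} and \ref{lmtt2} are quick. For \ref{lmtt1}, Lemma~\ref{lmm_ubn} bounds $\EE[Y^{\delta}_t(0)]$ uniformly in $\delta\in(0,1)$, so Chebyshev's inequality gives $\sup_{\delta\in(0,1)}\PP(|Y^{\delta}_t(0)|>A)\le A^{-1}\sup_{\delta\in(0,1)}\EE[Y^{\delta}_t(0)]\to 0$ as $A\uparrow\infty$. For \ref{lmtt2}, fix $x\in\RR$ and $\rho>0$; by Lemma~\ref{coro}(i) there is a nonnegative $Z_{\delta}(x)$ with $\sup_{\delta\in(0,1)}\EE[Z_{\delta}(x)]<\infty$ and $\sup_{0\le s\le t\le T,\,|t-s|<\epsilon}|Y^{\delta}_t(x)-Y^{\delta}_s(x)|\le Z_{\delta}(x)^{1/8}\epsilon^{1/16}$, whence by Chebyshev
\[
\PP\Big(\sup_{0\le s\le t\le T,\,|t-s|<\epsilon}|Y^{\delta}_t(x)-Y^{\delta}_s(x)|>\rho\Big)\le\PP\big(Z_{\delta}(x)>\rho^{8}\epsilon^{-1/2}\big)\le \rho^{-8}\epsilon^{1/2}\,\EE[Z_{\delta}(x)]\longrightarrow 0
\]
as $\epsilon\downarrow 0$, uniformly in $\delta\in(0,1)$.

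The delicate point is \ref{lmtt3}, because \eqref{sicr1} controls spatial increments at a \emph{fixed} time whereas \ref{lmtt3} asks for a spatial modulus of continuity that is uniform over $t\in[0,T]$. To bridge this I would split, for $x,y\in[-R,R]$,
\[
Y^{\delta}_t(x)-Y^{\delta}_t(y)=\big(Y^{\delta}_t(x)-Y^{\delta}_t(y)-Y^{\delta}_0(x)+Y^{\delta}_0(y)\big)+\big(Y^{\delta}_0(x)-Y^{\delta}_0(y)\big).
\]
The second term is deterministic: since $Y^{\delta}_0=p_{\delta}*X_0$ and $X_0\in H_{1,2}(\RR)$ by Hypothesis~\ref{hyp_x0}, the same integration-by-parts and Cauchy--Schwarz argument as in \eqref{sicr11} with $t=0$, using $\|p_{\delta}*\nabla X_0\|_2\le\|\nabla X_0\|_2$, gives $|Y^{\delta}_0(x)-Y^{\delta}_0(y)|\le\|X_0\|_{1,2}\,|x-y|^{1/2}$, uniformly in $\delta$. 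The first term is exactly what Lemma~\ref{coro}(ii) controls: applying it with $s=0$ produces a nonnegative $Z'_{\delta,R}$ with $\sup_{\delta\in(0,1)}\EE[Z'_{\delta,R}]<\infty$ such that, for all $t\in[0,T]$ and $x,y\in[-R,R]$,
\[
|Y^{\delta}_t(x)-Y^{\delta}_t(y)-Y^{\delta}_0(x)+Y^{\delta}_0(y)|\le (Z'_{\delta,R})^{1/16}\,T^{1/32}\,|x-y|^{1/32}.
\]
Hence $\sup_{0\le t\le T}|Y^{\delta}_t(x)-Y^{\delta}_t(y)|\le\|X_0\|_{1,2}|x-y|^{1/2}+T^{1/32}(Z'_{\delta,R})^{1/16}|x-y|^{1/32}$ for $x,y\in[-R,R]$, and, choosing $\epsilon$ small enough that $\|X_0\|_{1,2}\epsilon^{1/2}<\rho/2$ and then applying Chebyshev to $Z'_{\delta,R}$ exactly as in \ref{lmtt2}, one gets $\sup_{\delta\in(0,1)}\PP\big(\sup_{0\le t\le T,\,-R\le x\le y\le R,\,|x-y|<\epsilon}|Y^{\delta}_t(x)-Y^{\delta}_t(y)|>\rho\big)\to 0$ as $\epsilon\downarrow 0$.

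With \ref{lmtt1}--\ref{lmtt3} in hand, Lemma~\ref{lmm_tt} yields the precompactness, hence tightness, of $\{Y^{\delta}\}_{\delta>0}$ in $C([0,T]\times\RR;\RR)$. I expect the genuine obstacle to be step \ref{lmtt3}: promoting the fixed-time spatial estimate \eqref{sicr1} to a spatial modulus that is uniform in time, which is delivered by the two-parameter Garsia--Rodemich--Rumsey bound in Lemma~\ref{coro}(ii) together with the $H_{1,2}$-regularity of the initial datum; everything else reduces to routine Chebyshev bounds against the $L^{1}$-bounded random variables supplied by Lemma~\ref{coro}.
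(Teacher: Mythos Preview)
Your proof is correct and follows essentially the same route as the paper's: both verify the conditions of Lemma~\ref{lmm_tt} via Chebyshev against the uniform-in-$\delta$ bounds from Lemma~\ref{coro}, and both handle condition~\ref{lmtt3} by the same splitting $Y^{\delta}_t(x)-Y^{\delta}_t(y)=(Y^{\delta}_t(x)-Y^{\delta}_t(y)-Y^{\delta}_0(x)+Y^{\delta}_0(y))+(Y^{\delta}_0(x)-Y^{\delta}_0(y))$, controlling the first piece by Lemma~\ref{coro}(ii) and the second by the $H_{1,2}$ bound on $X_0$. The only cosmetic difference is that you also check condition~\ref{lmtt1} explicitly, which the paper omits.
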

\begin{proof}
It suffices to verify conditions \eqref{lmtt2} and \eqref{lmtt3} in Lemma \ref{lmm_tt}.  For condition \eqref{lmtt2}, using Lemma \ref{coro} (i) and Chebyshev's inequality, for any $x\in \RR$,
\begin{align*}
\PP\Big(\sup_{0\leq s\leq t\leq T, |t-s|<\epsilon}&|Y^{\delta}_t(x)-Y^{\delta}_s(x)|>\rho\Big)\leq \rho^{-8}\EE \big[ \sup_{0\leq s\leq t\leq T, |t-s|<\epsilon}|Y_t^{\delta}(x)-Y_s^{\delta}(x)|^8\big]\\
\leq &C\rho^{-8}\EE \big[ \sup_{0\leq s\leq t\leq T, |t-s|<\epsilon}Z_{\delta}(x)|t-s|^{\frac{1}{2}}\big]
\leq C\rho^{-8}\epsilon^{\frac{1}{2}}\EE [Z_{\delta}(x)]\to 0
\end{align*}
uniformly in $\delta>0$, as $\epsilon\downarrow 0$. This verifies condition \eqref{lmtt2}.

The proof of condition \eqref{lmtt3} is similar. Notice that
\[
|Y^{\delta}_t(x)-Y^{\delta}_t(y)|\leq |Y^{\delta}_t(x)-Y^{\delta}_t(y)-Y^{\delta}_0(x)+Y^\delta_0(y)|+|Y^{\delta}_0(x)-Y^\delta_0(y)|.
\]
This implies 
\begin{align*}
&\PP\Big(\sup_{0\leq t\leq T, -R\leq x\leq y\leq R, |x-y|<\epsilon}|Y^{\delta}_t(x)-Y^{\delta}_t(y)|>\rho\Big)\leq P_1+P_2,
\end{align*}
where
\begin{align*}
P_1=\PP\Big(\sup_{0\leq t\leq T, -R\leq x\leq y\leq R, |x-y|<\epsilon}|Y^{\delta}_t(x)-Y^{\delta}_t(y)-Y^{\delta}_0(x)+Y^\delta_0(y)|>\frac{\rho}{2}\Big)
\end{align*}
and
\begin{align*}
P_2=\PP \big(\sup_{ -R\leq x\leq y\leq R, |x-y|<\epsilon}|Y^{\delta}_0(x)-Y^\delta_0(y)|>\frac{\rho}{2}\big).
\end{align*}
By a similar argument as in \eqref{sicr11}, we can show that  $|Y^{\delta}_0(x)-Y^\delta_0(y)|\leq \|X_0\|_{1,2}|x-y|^{\frac{1}{2}}$
for some constant uniformly in $\delta>0$. Thus, $P_2=0$ for $\epsilon>0$ that is small enough. The estimate for $P_1$ can be done as an application of Chebyshev's inequality and Lemma \ref{coro} (ii). Finally, we can conclude that 
\[
\PP\Big(\sup_{0\leq t\leq T, -R\leq x\leq y\leq R, |x-y|<\epsilon}|Y^{\delta}_t(x)-Y^{\delta}_t(y)|>\rho\Big)\to 0,
\]
uniformly in $\delta>0$, as $\epsilon\downarrow 0$. Thus condition \eqref{lmtt3} holds. Therefore, $\{Y^{\delta}\}_{\delta}$ is a tight sequence in $C([0,T]\times \RR;\RR)$ according to Lemma \ref{lmm_tt}.
\end{proof}

\begin{proof}[Proof of Theorem \ref{thm_exs}:  existence]
  We only present the proof under Hypothesis \ref{hyp_0}. For other situations under Hypothesis \ref{hyp_fn} or \ref{hyp_ifn}, we can modify the proof following the idea as in Remark \ref{rmk_mmt}.   Let $(X^{\delta},Y^{\delta})$ be a solution to \eqref{sbm1}. Then, by Proposition \ref{proptt}, there exists a sequence $\delta_n\downarrow$ such that $Y^{\delta_n}$ is convergent in distribution in $C([0,T]\times \RR; \RR)$ to some random field $Y$. By the typical tightness argument, one can show that $\{X^{\delta_n}:n\geq 1\}$ is tight in $D([0,T];\cM_F(\RR))$. Therefore, by taking subsequence of $\{X^{\delta_n}\}$, we can assume it converges in distribution to some $\cM_F(\RR)$-valued process $X$. By the Skorohod representation theorem, we can also assume this convergence is almost surely.

In the next step, we show that $(X,Y)$ is a weak solution to the following equation
\[
\frac{\partial}{\partial t}X_t(x)=\frac{1}{2}\Delta X_t(x)+\sigma(t,x,\PP_{Y_t(x)})\sqrt{X_t(x)}\dot{W}(t,x).
\]
Equivalently, it suffices to show that $X_t$ is a solution to the following martingale problem (c.f. Stroock and Varadhan \cite{psbs-72-stroock-varadhan} and Kurtz \cite{sa-11-kurtz}), for any $\phi\in \cS(\RR)$,
\begin{align}
M_t(\phi)=X_t(\phi)-X_0(\phi)-\frac{1}{2}\int_0^tX_s(\Delta \phi)ds
\end{align}
is a continuous square integrable martingale, with quadratic variation
\begin{align}\label{ax_mart2}
\langle M(\phi)\rangle_{t}=\int_0^t\sigma(s,x, \PP_{Y_s(x)})^2\phi(x)^2X_s(dx)ds.
\end{align}
Notice that, using Perkins \cite[Theorem II.4.5]{springer-02-perkins}, we have
\begin{align*}
\langle M(\phi)\rangle_{t}=&\lim_{n\to\infty}\langle M^{\delta_n}(\phi)\rangle_{t}\\
=&\lim_{n\to\infty}\int_0^tds\int_{\RR}X^{\delta_n}_s(dx)\Big(\int_{\RR}dyp_{\delta_n}(x-y)\sigma(s,y, \PP_{Y^{\delta_n}_s(y)})\Big)^2\phi(x)^2.
\end{align*}
To verify the limit, we compute the following quantity, 
\begin{align*}
&\Big|\int_0^tds\int_{\RR}X^{\delta_n}_s(dx)\Big(\int_{\RR}dyp_{\delta_n}(x-y)\sigma(s,y, \PP_{Y^{\delta_n}_s(y)})\Big)^2\phi(x)^2\\
&-\int_0^tds\int_{\RR}\sigma(s,x,\PP_{Y_s(x)})^2\phi(x)^2X_s(dx)\Big|\leq I_1+I_2
\end{align*}
where
\begin{align*}
I_1=&\Big|\int_0^tds\int_{\RR}\Big[\Big(\int_{\RR}dyp_{\delta_n}(x-y)\sigma(s,y, \PP_{Y^{\delta_n}_s(y)})\Big)^2-\sigma(s,x,\PP_{Y_s(x)})^2\Big]\phi(x)^2X_s^{\delta_n}(dx)\Big|
\end{align*}
and
\begin{align*}
I_2=&\Big|\int_0^tds\int_{\RR}dx\sigma(s,x, \PP_{Y_s(x)})^2\phi(x)^2\big(X^{\delta_n}_s(dx)-X_s(dx)\big)\Big|.
\end{align*}
It is clear that $I_2\to 0$ as $n\to\infty$ because $X^{\delta_n}\to X$ in $D([0,T];\cM_F(\RR))$. On the other hand, notice that $X_s^{\delta_n}$ has a density almost surely. Thus, by Cauchy-Schwarz's inequality
\begin{align*}
I_1\leq& \bigg(\int_{\RR}dx\Big[\Big(\int_{\RR}dyp_{\delta_n}(x-y)\sigma(s,y, \PP_{Y^{\delta_n}_s(y)})\Big)^2-\sigma(s,x,\PP_{Y_s(x)})^2\Big]^2\phi(x)^2\bigg)^{\frac{1}{2}}\\
&\times\int_0^t\Big(\int_{\RR}\phi(x)^2X_s^{\delta_n}(x)^2dx\Big)^{\frac{1}{2}}:=I_{11}\times I_{12}.
\end{align*}
By dominated convergence theorem, we know that $I_{11}\to 0$ as $n\to\infty$. Furthermore, by Lemma \ref{lmm_ubn}, one can show that $\EE [I_{12}]$ is uniformly bounded in $n$. As a consequence, it follows by Fatou's lemma that
\[
\EE\big[ \liminf_{n\to\infty} I_1\big]\leq \lim_{n\to \infty}\EE [I_1]=0.
\]
This implies that $\liminf_{n\to \infty}I_1=0$ almost surely. That is enough to prove \eqref{ax_mart2} because we can take subsequence so that the above $\liminf_{n\to\infty}$  
can be replaced by   $\lim_{n\to\infty}$.

Finally, we complete the proof of this theorem by showing that for any $t\in [0,T]$, the distribution of $X_t$ and $Y_t$ coincide. Indeed, for any $\phi\in \cS(\RR)$, we can show that
\begin{align*}
\EE [\langle X_t, \phi\rangle] -\EE [\langle Y_t, \phi\rangle]\leq &\big|\EE [\langle X_t, \phi\rangle] - \EE [\langle X^{\delta_n}_t, \phi\rangle]\big|+ \big|\EE [\langle Y_t, \phi\rangle] - \EE [\langle Y^{\delta_n}_t, \phi\rangle] \big|\\
&+ \big|\EE [\langle X^{\delta_n}_t, \phi\rangle] - \EE [\langle Y^{\delta_n}_t, \phi\rangle] \big|.
\end{align*}
It suffices to show the convergence to $0$ of the last term. Recall that $Y_t^{\delta_n}(x)=\langle X_t^{\delta_n},p_{\delta}(x-\cdot)\rangle$ for all $(t,x)\in [0,T]\times \RR$. Thus, we can write
\begin{align*}
\big|\EE [\langle X^{\delta_n}_t, \phi\rangle] - \EE [\langle Y^{\delta_n}_t, \phi\rangle] \big|=&\EE \bigg[\Big|\int_{\RR}dx\int_{\RR}dy  p_{\delta_n}(x-y)(\phi(x)-\phi(y))X_t(dx)\Big|\bigg]\\
\leq &\sup_{x\in \RR}\Big|\int_{\RR}dy  p_{\delta_n}(x-y)(\phi(x)-\phi(y))\Big|\EE [\langle X_t, \1\rangle].
\end{align*}
Notice that
\begin{align*}
&\Big|\int_{\RR}dy  p_{\delta_n}(x-y)(\phi(x)-\phi(y))\Big|\\
\leq &\int_{|x-y|\leq \delta_n^{\frac{1}{3}}}dy  p_{\delta_n}(x-y)|\phi(x)-\phi(y)|+2\|\phi\|_{\infty}\int_{|x-y|> \delta_n^{\frac{1}{3}}}dy p_{\delta_n}(x-y)\\
\leq & \|\phi\|_{1,\infty}\delta_n^{\frac{1}{3}}\int_{|z|\leq \delta_n^{\frac{1}{3}}}dz  p_{\delta_n}(z)+2\|\phi\|_{\infty}\int_{|z|> \delta_n^{-\frac{1}{6}}}dz \frac{1}{\sqrt{2\pi}}e^{-\frac{z^2}{2}}\to 0,
\end{align*}
as $n\to\infty$. As a consequence, we have $\EE [\langle X_t, \phi\rangle] =\EE [\langle Y_t, \phi\rangle]$ for all $\phi\in \cS(\RR)$. The proof of the existence part of Theorem \ref{thm_exs} is complete.
\end{proof}

\section{A moment formula and some estimates}\label{sec_mmtfor}
In the proof of Lemma \ref{lmm_ubn}, we could presumably obtain a moment formula for $X_t(x)$ based on the method of moment duality by simply letting $\delta\downarrow 0$   in  \eqref{unby1}.   However, as a product of two dependent random variables, 
this formula \eqref{unby1} depends on a nonlinear function of the pure jump process $n_t$ and a linear function of $f_t^{\delta}$ which is a deterministic process with random jumps. If we want to write the moment formula in an explicit form, namely without involving further expectations of the solution, one needs to deal with all the jumps $n\to (n-1), (n-1)\to (n-2)$, etc, until $2\to 1$. A simple calculation related to one jump $k\to (k-1)$ is carried out in the proof of Lemma \ref{lmm_mtgdual}.  We see that this is already complex. It is difficult for us to obtain an explicit formula for the higher order moments for the solution by using the formula derived from \eqref{unby1}.  

Since the moments play a critical role in our approach, in this section by the mild formulation for solution $X_t(x)$ and by using It\^{o}'s formula iteratively, we establish an explicit formula (see Theorem \ref{coro_mnt}) for the moments of $X_t(x)$ that satisfies equation \eqref{sbm}. The proof of this theorem is given in Sections \ref{ssec_itrt} and \ref{ssec_pfmnt}. Afterward, in Section  \ref{ssec_mmtest}, an upper bound for the moments is obtained, which will be used in the proof of uniqueness results in Section \ref{sec_unq}. 

The next theorem presents a moment formula for $X_t(x)$.
\begin{theorem}\label{coro_mnt}
Suppose that $X_0\in\cM_F(\RR)$ satisfies Hypothesis \ref{hyp_x0}. Let $n\in\NN$. Then, for any $(t,x)\in[0,T]\times \RR$, the following equation holds:
\begin{align}\label{for_mmt0}
\EE [X_{t}(x)^n]=&\sum_{n'=0}^{n-1}\sum_{(\alpha,\beta,\tau)\in \cJ_{n,n'}}\prod_{i=1}^n\Big(\int_{\RR}dz p_{t}(x-z)X_0(z)\Big)^{1-\alpha_i}\nonumber\\
&\times \int_{\mathbb{T}_{n'}^t}d\bs_{n'}\int_{\RR^{n'}}d\bz_{n'}\prod_{i=1}^{n'}\Big(\int_{\RR}dz p_{s_i}(z_i-z)X_0(z)\Big)^{1-\beta_i} \prod_{i=1}^{|\alpha|}p(t-s_{\tau(i)},x-z_{\tau(i)})\nonumber\\
&\times\prod_{i=|\alpha|+1}^{2n'}p(s_{\iota_{\beta}(i-|\alpha|)}-s_{\tau(i)},z_{\iota_{\beta}(i-|\alpha|)}-z_{\tau(i)})\prod_{i=1}^{n'}\sigma(s_i,z_i,\PP_{X_{s_i}(z_i)})^2,
\end{align}
where the set $\cJ_{n,n'}$ of triples $(\alpha,\beta,\tau)$ is defined as in \eqref{def_jkm} below,
\begin{align}\label{def_dtk}
\mathbb{T}_{n'}^t=\big\{\bs_{n'}=(s_1,\dots,s_{n'})\in [0,T]^{n'}:0<s_{n'}<s_{n'-1}<\dots<s_1<t\big\},
\end{align}
and $p(t,x)=p_t(x)$ to avoid long sub-indexes.
\end{theorem}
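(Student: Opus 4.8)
The plan is to work entirely from the mild formulation. Any weak solution of \eqref{sbm} (equivalently, any solution of the martingale problem \eqref{mp1}--\eqref{mp2}) can be written as
\[
X_t(x)=J_0(t,x)+\int_0^t\!\!\int_{\RR}p_{t-s}(x-y)\,\sigma\big(s,y,\PP_{X_s(y)}\big)\sqrt{X_s(y)}\,W(ds,dy),\qquad J_0(t,x):=\int_{\RR}p_t(x-z)X_0(z)\,dz,
\]
for a suitable space--time white noise $W$; under Hypothesis \ref{hyp_x0} the uniform moment bounds of the type in Lemma \ref{lmm_ubn} apply to $X$, which (together with the boundedness of $\sigma^2$ and the integrability of products of heat kernels over simplices) makes every integral below absolutely convergent and every use of the stochastic Fubini theorem and of ``$\EE\int=\int\EE$'' legitimate.

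First I would establish, in Section \ref{ssec_itrt}, a one-step reduction: for a vector $(x_1,\dots,x_m)$, compute $\EE\big[\prod_{i=1}^m X_t(x_i)\big]$ by splitting each factor into its heat part $J_0$ and its stochastic-convolution part, expanding the product and taking expectations. The singleton stochastic terms vanish by the martingale property; for each subset of size $\ge 2$, an application of It\^o's formula to the corresponding product of continuous martingales (equivalently, repeated use of the martingale property and the It\^o isometry) rewrites it as a sum over pairs $\{j,k\}$ of a time--space integral in which the two kernels $p_{t-s}(x_j-z)$, $p_{t-s}(x_k-z)$ are attached to a newly created variable $z$ with weight $\sigma(s,z,\PP_{X_s(z)})^2$, multiplied by $X_s(z)$ and by the remaining stochastic convolutions now frozen at the earlier time $s$. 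This is a recursion: $\EE\big[\prod_{i=1}^m X_t(x_i)\big]$ is expressed through objects of the same shape with one fewer stochastic-convolution factor and one more time--space integration.

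Then, in Section \ref{ssec_pfmnt}, I would iterate this reduction. Each round removes one stochastic-convolution factor and creates one new pair $(s_i,z_i)$ carrying the weight $\sigma(s_i,z_i,\PP_{X_{s_i}(z_i)})^2$, with the new times strictly ordered so that after $n'$ rounds they range over the simplex $\mathbb{T}^t_{n'}$ of \eqref{def_dtk}; since a lone surviving factor must flow into the initial datum, the iteration stops after at most $n-1$ rounds, producing the outer sum $\sum_{n'=0}^{n-1}$ in \eqref{for_mmt0}. Each resulting term corresponds to a complete ``history'', encoded by a triple $(\alpha,\beta,\tau)\in\cJ_{n,n'}$: $\alpha$ records which of the $n$ original slots went to the initial datum (each contributing $\int_{\RR}p_t(x-z)X_0(z)\,dz$) versus were paired, $\beta$ records the same for the $n'$ created slots (each surviving one contributing $\int_{\RR}p_{s_i}(z_i-z)X_0(z)\,dz$), and $\tau$ (together with the associated linkage map $\iota_\beta$) prescribes which heat kernel connects which two points, the Chapman--Kolmogorov identity being used to collapse the ``non-fresh'' mild kernels $p_{t-r}$ carried past earlier pairings into the kernels $p(t-s_{\tau(i)},\cdot)$ and $p(s_{\iota_\beta(\cdot)}-s_{\tau(i)},\cdot)$ displayed in \eqref{for_mmt0}. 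Summing over all histories and finally setting $x_1=\dots=x_n=x$ yields the claimed formula.

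The hard part will be the combinatorial bookkeeping in the second stage: proving that the iteration closes exactly on the index set $\cJ_{n,n'}$ and that the accumulated heat kernels reassemble into precisely the products in \eqref{for_mmt0}, which demands careful tracking of the non-fresh mild kernels through every pairing. A secondary technical nuisance is justifying the repeated It\^o step, since $X_s(y)$ is only a random field and not manifestly a semimartingale in $(s,y)$; this is handled by staying within the stochastic-Fubini form of the mild equation and using the a priori moment estimates to dominate all the terms throughout.
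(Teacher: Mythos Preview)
Your plan is essentially the paper's own strategy: iterate It\^o's formula on the mild representation, record each pairing by an element of $\cJ_{n,n'}$, and specialize $x_1=\dots=x_n=x$ at the end. Two points of comparison are worth noting.

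First, the paper packages the recursion more cleanly by introducing, for each fixed $t$, the auxiliary process
\[
Y^t_s(x)=J_0(t,x)+\int_0^s\!\!\int_{\RR}p_{t-r}(x-y)\,\widehat\sigma(r,y)\sqrt{X_r(y)}\,W(dr,dy),\qquad 0\le s\le t,
\]
which is a genuine continuous semimartingale in $s$ with $Y^t_t(x)=X_t(x)$. It then proves by induction on $n$ the more general identity (Proposition \ref{prop_mnt}) for $\EE\big[\prod_{i=1}^n Y^{t_i}_{t_n}(x_i)\big]$: one application of It\^o's formula to this product yields the initial term $\prod_i J_0(t_i,x_i)$ plus a sum over pairs $\{k_1,k_2\}$ of integrals involving $\EE\big[Y^s_s(z)\prod_{i\ne k_1,k_2}Y^{t_i}_s(x_i)\big]$, which is exactly an $(n-1)$-factor product of the same shape. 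The inductive step then reduces to a purely combinatorial bijection (Lemma \ref{lmm_cat}) between $\cJ_{n,n'}$ and pairs $(k_1,k_2)$ together with an element of $\cJ_{n-1,n'-1}$. This avoids your extra initial expansion of each $X_t(x_i)$ into $J_0+M_i$ (with the resulting $2^n$ subsets) and dispenses with the ``non-fresh kernel'' bookkeeping you describe: because each $Y^{t_i}_\cdot$ keeps its own target time $t_i$, the kernel appearing when that factor is paired at time $s_k$ is simply $p_{t_i-s_k}$, and no Chapman--Kolmogorov step is needed (your remark about it is superfluous and slightly misleading). Your acknowledged ``secondary technical nuisance'' about $X_t(x)$ not being a semimartingale in $t$ is exactly what the $Y^t_s$ device resolves; formalizing your approach would lead you to the same construction.
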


Before presenting the proof of Theorem \ref{coro_mnt}, let us first take a look at the following example that may bring us some insight into this moment formula. Let $X$ be a solution to \eqref{sbm}. Then, for any fixed $(t,x)\in [0,T]\times \RR$, the following mild formulation holds,
\[
X_t(x)=\int_{\RR}dyp_t(x-y)X_0(y)+\int_0^t\int_{\RR}p_{t-s}(x-y)\sigma(s,y,\PP_{X_s(y)})\sqrt{X_s(y)}W(ds,dy).
\]
Due to the singularity of $p_{t-s}(x-y)$ when 
$t=s$, $X_t(x)$ is not a semimartingale in $t$.  We introduce an auxiliary process $Y^t=\{Y^t_s(x):0\leq s\leq t, x\in \RR\}$, where
\begin{align}\label{def_yst}
Y^t_s(x)=\int_{\RR}dyp_t(x-y)X_0(y)+\int_0^s\int_{\RR}p_{t-r}(x-y)\sigma(r,y,\PP_{X_r(y)})\sqrt{X_r(y)}W(dr,dy).
\end{align}
As a process in $s$ it is a semimartingale. 
Applying It\^{o}'s formula on $f(X_t(x))=X_t(x)^n=(Y^t_t(x))^n$ with some $n\in \NN$, and noticing that $\widehat{\sigma}(t,x)=\sigma(t,x,\PP_{X_t(x)})$ is  a deterministic function, we can write
\begin{align*}
X_t(x)^n=&\Big[\int_{\RR}dyp_t(x-y)X_0(y)\Big]^n+n\int_0^t\int_{\RR}p_{t-s}(x-y)\widehat{\sigma}(s,y)\sqrt{X_s(y)}Y_s^t(x)^{n-1}W(ds,dy)\\
&+\frac{1}{2}n(n-1)\int_0^tds\int_{\RR}dyp_{t-s}(x-y)^2\widehat{\sigma}(s,y)^2Y_s^t(x)^{n-2}X_s(y)\,. 
\end{align*}  
Taking expectations on both sides, one gets
\begin{align*}
\EE [X_t(x)^n] = &\EE [Y_t^t(x)^n]=\Big(\int_{\RR}p_t(x-y)X_0(y)dy\Big)^n\\
&+\frac{1}{2}n(n-1)\int_0^tds\int_{\RR}dyp_{t-s}(x-y)^2\widehat{\sigma}(s,y)^2\EE \big[Y_s^t(x)^{n-2}Y^s_s(y)\big].
\end{align*} 
In other words, $\EE [Y^t_t(x)^n]$ can be represented in terms of $\{\EE [Y^t_s(x)^{n-2}Y_s^s(y)]: (s,y)\in [0,t]\times \RR\}$. Applying It\^{o}'s formula to  $Y^t_s(x)^{n-2}Y_s^s(y)$, we can write $\EE (Y^t_s(x)^{n-2}Y_s^s(y))$ in terms of $\{\EE [Y^t_r(x)^{n-4}Y_r^s(y)Y^r_r(z)]:(r,z)\in [0,s]\times \RR\}$ and $\{\EE [Y^t_r(x)^{n-3}Y^r_r(z)]:(r,z)\in [0,s]\times \RR\}$. In fact, 
each time when we apply 
It\^{o}'s formula to a nominal $f$ of degree $n$ 
we have two terms:  the first derivative and the second derivative terms. Noticing that we can write 
\eqref{def_yst} as $dY_s^t=dY_s^{(t, 1)}+dY_s^{(t,2)}$,
the first term containing no unknown and the second term containing a  square root of unknown, 
the expectation of the first derivative term of $f$ will produce
a degree less (of the unknowns). As for the second derivative, observe that the second derivative of $f$ is a nominal of degree $n-2$  but it must be multiplied by the quadratic variation of 
$dY_s^{(t,2)}$,  a factor of $X_s(y)$. Thus, we also obtain a nominal of degree $n-1$. This means that 
when we apply 
It\^{o}'s formula, we can represent the expectation of the  nominal 
$\prod_{i=1}^k (Y_{r_i}^{t_i})^{n_i}$ of degree $n=n_1+\cdots+n_k$ 
by the expectation of a polynomial 
of degree $n-1$.   
This iteration can be proceeded a finite number of times until  
we arrive at the expectation of a linear function of  $Y^t_s(y)$, 
whose expectation is immediately computed by 
\eqref{def_yst}. Then, an explicit formula for $\EE [Y_t^t(x)^n]=\EE [X_t(x)^n]$ is obtained.

Therefore, to provide a moment formula for $X_t(x)$ with rigorous proof, we need to know the   expectation  for  the form 
\begin{align*}
\prod_{i=1}^{n}Y^{t_i}_{t_n}(x_i),
\end{align*}
where $n\in\NN$ and $\bt_n=(t_1,\dots,t_n)\in \mathbb{T}_n^T$ (see \eqref{def_dtk}).

\subsection{Iteration for power functions}\label{ssec_itrt}
Let $n\in\NN$, and let $f_0:\RR^{n}\to \RR$ be given by $f_0(\bx_n)=\prod_{i=1}^n x_i$.  Consider the iteration as follows. 

\begin{enumerate}[(1)]
\setcounter{enumi}{-1}
\item The $0$-th iteration just keeps $f_0$ invariant. Denote by $\cF_0=\{f_0\}$, the set of all outputs in the $0$-th iteration. 

\item In the $1$-st iteration, we choose $f_0\in\cF_0$, then differentiate $f_0$ twice with respect to arbitrary arguments and multiply the derivative by $y_1$. Write $\cF_1$ for the collection of all non-zero outputs. Then, $f_1\in \cF_1$ if and only if
\[
f_1(\bx_n,y_1)=\frac{\partial^2f_0}{\partial x_i\partial x_j}(\bx_n)\times y_1=y_1\prod_{\substack{1\leq m\leq n\\ m\notin \{i, j\}}}x_m ,
\]
for some $1\leq i<j\leq n$. Thus, an element $f_1$ in $\cF_1$ will be a function of $(n-1)$-variables. 

\item The $2$-nd iteration is very similar. Choose any $f_1\in \cF_1$. Differentiating $f_1$ twice, then multiplying by $y_2$, and denoting the set of all possible outputs by $\cF_2$. In this case, $f_2\in \cF_2$, if and only if  
\[
f_2(\bx_n,\by_2)=\frac{\partial^2f_1}{\partial x_{i'}\partial x_{j'}}(\bx_n,y_1)\times y_2=y_1y_2\prod_{\substack{1\leq m\leq n\\ m\notin \{i,j,i',j'\}}}x_m,
\]
for some $1\leq i'<j'\leq n$ and $\{i',j'\}\cap \{i,j\}=\emptyset$; or
\[
f_2(\bx_n,\by_2)=\frac{\partial^2f_1}{\partial x_{i'}\partial y_1}(\bx_n,y_1)\times y_2=y_2\prod_{\substack{1\leq m\leq n\\ m\notin \{i,j,i'\}}}x_m,
\]
for some $1\leq i'\leq n$ and $i'\notin\{i,j\}$. Thus, an element $f_2$ in $\cF_2$ will be a function of $(n-2)$-variables. 

\item[$\vdots$]

\item[(n)] In the $n$-th iteration, one should choose any function $f_{n-1}\in \cF_{n-1}$ and then differentiate it twice and multiply by $y_n$. In fact, $\cF_{n-1}$ consists of the   function  of 
$n-(n-1)=1$ variable, namely,   $f_{n-1}(\bx_n,\by_{n-1})=y_{n-1}$ for all $(\bx_n,\by_{n-1})\in \RR^{2n-1}$. Therefore, after this iteration, $\cF_n$, the set of non-zero outputs is empty. The whole iteration stops. 
\end{enumerate}

Fix $ n'\in \{0,\dots,n-1\}$, and choose any function $f_{n'}\in \cF_{n'}$. Then, we associate a multi-index $[\alpha,\beta]=[(\alpha_1,\dots,\alpha_n),(\beta_1,\dots,\beta_{n'})]\in \{0,1\}^{n+n'}$ to $f_{n'}$, such that
\begin{align}\label{def_ab}
f_{n'}(\bx_n,\by_{n'})=\prod_{i=1}^n x_i^{1-\alpha_i}\prod_{i=1}^{n'}y_i^{1-\beta_i}.
\end{align}
Denote by $\cI_{n,n'}$ the collection of  all multi-index $[\alpha,\beta]$  such that there exists a function $f_{n'}\in \cF_{n'}$ with the representation \eqref{def_ab}. In particular, if $n'=0$, then $\beta$ should be a ``$0$-dimensional index'', and it will be written as $\beta=\partial$. From the definition of $\cF_{n'}$, it is easy to see that $[\alpha,\beta]\in \{0,1\}^{n+n'}$ is an element of $\cI_{n,n'}$, if and only if
\begin{enumerate}[(i)]
\item $\beta_{n'}=0$.

\item Let $|\alpha|=\sum_{i=1}^n \alpha_i$ and $|\beta|=\sum_{i=1}^{n'}\beta_i$, then $|\alpha|+|\beta|=2n'$.
\end{enumerate}

For example, assume $n=3$. Then, 
\[
\cI_{3,0}=\{[(0,0,0),\partial]\},
\]
 \[\cI_{3,1}=\{[(1,1,0),(0)], [(1,0,1),(0)],[(0,1,1), (0)]\},
\]
and
\[
\cI_{3,2}=\{[(1,1,1),(1,0)]\}.
\]

On the other hand, for any $[\alpha,\beta]\in \cI_{n,n'}$ with $0\leq n'\leq n-1$, there exist $1\leq j_1<\cdots<j_{|\alpha|}\leq n$ such that $\alpha_{j_1}=\dots=\alpha_{j_{|\alpha|}}=1$ and $\alpha_i=0$ for all $i\in \{1,\dots, n\}\setminus \{j_1,\dots, j_{|\alpha|}\}$. This fact allows us to define a one to one increasing map $\iota_{\alpha}:\{1,\dots,|\alpha|\}\to \{1,\dots,n\}$ such that $\iota_{\alpha}(i)=j_i$. The function $\iota_{\beta}:\{1,\dots, |\beta|\}\to \{1,\dots, n'\}$ is also defined in a similar way. 

Given any multi-index $[\alpha,\beta]\in \cI_{n,n'}$, we can find a unique $f_{n'}\in \cF_{n'}$ satisfying \eqref{def_ab}. Notice that $f_{n'}$ is the output of the $n'$-th iteration for a function $f_{n'-1}\in \cF_{n'-1}$. But it is impossible in general to recover this $f_{n'-1}$ from $f_{n'}$, because we do not know which operator is applied in the $n'$-th iteration. 
Similarly, the operator in the $k$-th iteration with $k=1,\dots, n'-1$ is also unknown. 
Thus, it is reasonable to introduce a map $\tau:\{1,\dots, |\alpha|+|\beta|=2n'\}\to \{1,\dots, n'\}$,   given as follows. If $i\leq |\alpha|$ and the differentiation $\partial/\partial x_{\iota_{\alpha}(i)}$ occurs in the $k$-th iteration with $k\in\{1,\dots, n'\}$, then $\tau(i)=k$; instead, if $i>|\alpha|$ and $\partial/\partial y_{\iota_{\beta}(i-|\alpha|)}$ occurs in the $k$-th iteration, then $\tau(i)=k$.  
To be more precise, given $[\alpha,\beta]\in \cI_{n,n'}$, we write $\cK_{n,0}^{\alpha,\beta}=\emptyset$, and otherwise the set $\cK_{n,n'}^{\alpha,\beta}$ with $n'>0$, is defined to be a collection of maps $\tau:\{1,\dots, 2n'\}\to \{1,\dots, n'\}$ satisfying the following properties,
\begin{enumerate}[(i)]
\item For any $k\in \{1,\dots,n'\}$, there exist   exactly two indexes $ \{i_1,i_2\}\in\{1,\dots, 2n'\}$  such that $\tau(i_1)=\tau(i_2)=k$.
\item For all $i\in \{|\alpha|+1,\dots, 2n'\}$, $\tau(i)>\iota_{\beta}(i-|\alpha|)$.
\end{enumerate}
Property (i) means that in each iteration, differentiation occurs twice. 
Additionally, we notice that for any $k\in \{1,\dots, n'\}$, the set $\cF_{k}$ consists of functions of $(\bx_n,\by_k)$. Thus at the $(k+1)$-th iteration, the differentiation $\frac{\partial}{\partial y_{k'}}$ would not occur, if $k'>k$. 
This fact explains why we need property (ii) to be fulfilled as well. Furthermore, it ensures that if $1\leq i_1<i_2\leq 2n'$ are such that $\tau(i_1)=\tau(i_2)=1$, then $i_2\leq |\alpha|$.

For any $n\in\NN$ and nonnegative integer $n'\leq n-1$, we denote by
\begin{align}\label{def_jkm}
\cJ_{n,n'}=\{(\alpha,\beta,\tau):[\alpha,\beta]\in\cI_{n,n'},\tau\in\cK_{n,n'}^{\alpha,\beta}\}.
\end{align}
Note that if $n'=0$, then for any $n\geq 1$, $(\alpha,\beta,\tau)\in \cJ_{n,0}$ if and only if $\alpha=\mathbf{0}_n$, the $0$-vector in $\RR^n$, $\beta=\partial$ and $\tau\in\emptyset$. In this case, we write $\cJ_{n,0}=\{(\mathbf{0}_n,\partial,\partial)\}$. Let us take a look at the following example of an element in $\cJ_{4,2}$.
\begin{figure}
\centering
\begin{tabular}{ |c||c|c|c|c||c|c| } 
 \hline
  & $x_1$ & $x_2$ & $x_3$ & $x_4$ & $y_1$ & $y_2$  \\ 
  \hline
  $f_0(\bx_4)=x_1x_2x_3x_4$ & {\color{red} $\bullet$} & {\color{red}$\bullet$} & {\color{red}$\bullet$} & {\color{red}$\bullet$} &  &   \\
  \hline
  $f_1(\bx_4,\by_1)=x_2x_3y_1$ &  & {\color{red}$\bullet$} & {\color{red}$\bullet$} &  & {\color{blue}$\bullet$} &  \\
  \hline
  $f_2(\bx_4,\by_2)=x_2y_2$ &  & {\color{red}$\bullet$} &  &  &  & {\color{blue}$\bullet$} \\
  \hline
\end{tabular}
 \captionof{table}{ \label{tab_1}}
\end{figure}
  Consider 
\[
f_2(\bx_4,\by_2)=x_2y_2=x_1^{1-\alpha_1}x_2^{1-\alpha_2}x_2^{1-\alpha_2}x_4^{1-\alpha_4}y_1^{1-\beta_1}y_2^{1-\beta_2}\in \cF_{2},
\]
obtained by the iteration given as in Table \ref{tab_1}. The associated multi-index to $f_2$ is $[\alpha,\beta]=[(1,0,1,1),(1,0)]$. In this case, $|\alpha|=3$, $|\beta|=1$, $\iota_{\alpha}:\{1,2,3\}\to\{1,2,3,4\}$, with $\iota_{\alpha}(1)=1$, $\iota_{\alpha}(2)=3$, $\iota(3)=4$, and $\iota_{\beta}:\{1\}\to \{1,2\}$, given by $\iota_{\beta}(1)=1$. On the other hand, in the first iteration, we differentiate $x_1=x_{\iota_{\alpha}(1)}$ and $x_4=x_{\iota_{\alpha}(3)}$. Thus, $\tau(1)=\tau(3)=1$. Similarly, in the next iteration, we differentiate $x_3=x_{\iota_{\alpha}(2)}$ and $y_1=y_{\iota_{\beta}(4-3)}$. It follows that $\tau(2)=\tau(4)=2$.

The next lemmas provide some properties of the set $\cJ_{n,n'}$, which will be used in the proof of Theorem \ref{coro_mnt}.
\begin{lemma}\label{lmm_cat}
Let $n\in\NN$, let $n'\in \{1,\dots, n-1\}$, and let $\cJ_{n,n'}$ be given as in \eqref{def_jkm}. Denote by
\[
\cJ_{n,n'}'=\big\{(i,j,\alpha,\beta,\tau):1\leq i<j\leq n,  (\alpha,\beta,\tau)\in \cJ_{n-1,n'-1}\big\}.
\]
Then, there exists a bijection $\cM:\cJ_{n,n'}\to \cJ_{n,n'}'$.
\end{lemma}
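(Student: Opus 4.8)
An element of $\cJ_{n,n'}$ is a bookkeeping device for the whole history of the $n'$-step differentiation procedure that starts from $f_0(\bx_n)=\prod_{i=1}^n x_i$: at the $k$-th step one differentiates the current function in two of its active variables and appends the factor $y_k$. The plan is to build $\cM$ by stripping off the first step. First I would record the fact already noted after the definition of $\cK_{n,n'}^{\alpha,\beta}$: since the only active variables at step $1$ are $x_1,\dots,x_n$, the two differentiations of step $1$ are $\partial/\partial x_i$ and $\partial/\partial x_j$ for a unique pair $1\le i<j\le n$; explicitly, writing $\{i_1,i_2\}=\tau^{-1}(1)$ with $i_1<i_2$ one has $i_1,i_2\le|\alpha|$ and $(i,j)=(\iota_\alpha(i_1),\iota_\alpha(i_2))$ (so $i<j$, as $\iota_\alpha$ is increasing), with $\alpha_i=\alpha_j=1$. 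These $i,j$ will be the first two components of $\cM(\alpha,\beta,\tau)$.

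Next I would produce the residual datum $(\alpha',\beta',\tau')\in\cJ_{n-1,n'-1}$. After step $1$ the procedure acts on the $n-1$ active variables $\{x_m:m\notin\{i,j\}\}\cup\{y_1\}$; I relabel these as $x'_1,\dots,x'_{n-1}$, taking $x'_l=x_{m_l}$ where $m_1<\cdots<m_{n-2}$ enumerates $\{1,\dots,n\}\setminus\{i,j\}$ and $x'_{n-1}=y_1$, relabel $y_{k+1}$ as $y'_k$ for $1\le k\le n'-1$, and relabel step $k+1$ as step $k$. In multi-index terms $\alpha'_l=\alpha_{m_l}$ for $l\le n-2$, $\alpha'_{n-1}=\beta_1$, $\beta'_k=\beta_{k+1}$, while $\tau'$ is the transport of the restriction of $\tau$ to the $2(n'-1)$ differentiations not lying in step $1$ along the above relabelling, with all step labels decreased by one. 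I would then check that $(\alpha',\beta',\tau')$ obeys every defining condition of $\cJ_{n-1,n'-1}$: $\beta'_{n'-1}=\beta_{n'}=0$; $|\alpha'|+|\beta'|=(|\alpha|-2)+|\beta|=2(n'-1)$, because $\alpha_i=\alpha_j=1$; each remaining step still carries exactly two differentiations, since we deleted only step $1$; and the causality condition $\tau'(\cdot)>\iota_{\beta'}(\cdot)$ holds because it held for $\tau$ and both sides drop by one under the shift. The single point deserving comment is that $y_1$ has turned into the $x'$-variable $x'_{n-1}$, which carries no causality constraint; this is legitimate precisely because condition (ii) for $\tau$ forces any differentiation of $y_1$ to sit at step $\ge 2$, hence at step $\ge 1$ after the shift, which is vacuous.

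The inverse map runs the construction backwards: from $(i,j,\alpha',\beta',\tau')$ one prepends a step-$1$ operation differentiating $x_i,x_j$, sets $\alpha_i=\alpha_j=1$, distributes $\alpha'_1,\dots,\alpha'_{n-2}$ to the coordinates in $\{1,\dots,n\}\setminus\{i,j\}$ in increasing order, sets $\beta_1=\alpha'_{n-1}$ and $\beta_k=\beta'_{k-1}$ for $k\ge 2$, shifts all step labels up by one, and declares $\tau(\iota_\alpha^{-1}(i))=\tau(\iota_\alpha^{-1}(j))=1$. That this lands in $\cJ_{n,n'}$ is the mirror image of the verification above (the causality condition is restored by the upward shift, and the new step $1$ consists of exactly two $x$-differentiations by construction). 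Since peeling/prepending, relabelling/unrelabelling and shifting down/up are each manifestly invertible, the two maps compose to the identity in both directions, giving the claimed bijection.

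\textbf{Expected main obstacle.} There is no deep difficulty; the work is entirely in the bookkeeping. The one place where care is genuinely required is the compatibility of the two conditions defining $\cK$ — each step has exactly two differentiations, and the causality inequality $\tau(i)>\iota_\beta(i-|\alpha|)$ — with the index shift in both directions, together with the observation that demoting $y_1$ to an $x'$-variable neither creates nor destroys admissible configurations. Making the relabelling of differentiation indices precise, so that $\tau'$ is unambiguously defined and the order of the remaining $x$- and $y$-differentiations is correctly inherited, is the most error-prone part of writing up the argument.
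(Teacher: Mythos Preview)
Your proposal is correct and follows essentially the same approach as the paper: you strip off the first iteration step, record the pair $(i,j)=(\iota_\alpha(i_1),\iota_\alpha(i_2))$ with $\{i_1,i_2\}=\tau^{-1}(1)$, append $\beta_1$ to the truncated $\alpha$ to form $\alpha'$, shift $\beta$ left to form $\beta'$, and shift $\tau$ down by one after deleting the two step-$1$ indices. The paper does exactly this, writing out the explicit piecewise formula for $\tau'$ (your ``transport along the relabelling with step labels decreased by one'') and then verifying the causality inequality $\tau'(i)>\iota_{\beta'}(i-|\alpha'|)$ by the same shift argument you sketch; your observation that demoting $y_1$ to $x'_{n-1}$ is harmless because any differentiation of $y_1$ already lies at step $\ge 2$ is precisely the content of the paper's verification in the case $i\ge |\alpha|-1$.
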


  Lemma \ref{lmm_cat} states that any $n'$ times iteration for an $n$-variable (product) function can be decomposed uniquely and reversible to a  single iteration for an $n$-variable function and an $n'-1$ times iteration for an $(n-1)$-variable function.

Take the iteration described in Table \ref{tab_1} as an example. Note that in the first iteration, we differentiate $x_1$ and $x_4$. Thus, we write $(i,j)=(1,4)$. Then, consider $f_1(\bx_4,y_1)$ as a new function $f_0'(\bx_3')=x_1'x_2'x_3'$ with $x_1'=x_2$, $x_2'=x_3$ and $x_3'=y_1$. Then, by deleting the row of $f_0$, and  the columns of $x_1$ and $x_4$  in Table \ref{tab_1}, we get 
 \begin{figure}
 \centering
\begin{tabular}{ |c||c|c|c||c| } 
 \hline
  & $x_1'=x_2$ & $x_2'=x_3$ & $x_3'=y_1$ & $y_1'=y_2$    \\ 
  \hline
  $f_0'(\bx_3')=x_1'x_2'x_3'$ & {\color{red}$\bullet$} & {\color{red}$\bullet$} & {\color{red}$\bullet$}  &   \\
  \hline
  $f_1'(\bx_3',\by_1')=x_1'y_1'$ &  {\color{red}$\bullet$} & &  & {\color{blue}$\bullet$}   \\
  \hline
\end{tabular}
 \captionof{table}{\label{tab_2}}
\end{figure}
  As shown in Table \ref{tab_2}, the iteration $f_1\Rightarrow f_2$ can be understood as $f_0'\Rightarrow f_1'$ with $f_1'(\bx_3',\by_1')=x_1'y_1'=x_2y_2$. Then, the associated triple $(\alpha',\beta',\tau')\in \cJ_{3,1}$ can be written as  $\alpha'=(0,1,1)$, $\beta'=(0)$ and $\tau':\{1,2\}\to\{1\}$ given by $\tau'(1)=\tau'(2)=1$. In this case, it is easy to check that $(i,j,\alpha',\beta',\tau')\in \cJ_{4,2}'$. Conversely, it is not hard to see that we can also recover $(\alpha,\beta,\tau)$ as in Table \ref{tab_1} from $(i,j,\alpha',\beta',\tau')\in \cJ_{4,2}'$ with $(i,j)=(1,4)$ and $(\alpha',\beta',\tau')$ defined as in Table \ref{tab_2}. 

\begin{proof}[Proof of Lemma \ref{lmm_cat}]
Choose any $(\alpha,\beta,\tau)\in \cJ_{n,n'}$. Then, there exist $j_1,j_2\in \{1,\dots, |\alpha|\}$ with $j_1<j_2$  such that $\tau(j_1)=\tau(j_2)=1$. We define 
\[
\cM(\alpha, \beta, \tau) = (\iota_{\alpha}(j_1), \iota_{\alpha}(j_2), \alpha', \beta', \tau')\]
with
\begin{align}\label{def_alph'}
\alpha'=(\alpha_1,\dots,\alpha_{\iota_{\alpha}(j_1)-1},\alpha_{\iota_{\alpha}(j_1)+1},\dots, \alpha_{\iota_{\alpha}(j_2)-1},\alpha_{\iota_{\alpha}(j_2)+1},\dots, \alpha_{n},\beta_1),
\end{align}
\begin{align}\label{def_beta'}
\beta'=(\beta_2,\dots,\beta_{n'}),
\end{align}
and
\begin{align}\label{def_tau'2}
\tau'(i)=\begin{cases}
\tau(i)-1, &i< j_1,\\
\tau(i+1)-1, & j_1\leq i< j_2-1,\\
\tau(i+2)-1, & j_2-1\leq i\leq 2n'-2.
\end{cases}
\end{align}
It is clear that $\alpha'\in \{0,1\}^{n-1}$, $\beta'\in \{0,1\}^{n'-1}$ with $\beta'_{n'-1}=\beta_{n'}=0$ and $|\alpha'|+|\beta'|=|\alpha|+|\beta|-2=2(n'-1)$. In other words, $[\alpha',\beta']\in \cI_{n-1,n'-1}$. 

It suffices to show that $\tau'\in \cK_{n-1,n'-1}^{\alpha',\beta'}$. By definition \eqref{def_tau'2} and the fact that $\tau(j_1)=\tau(j_2)=1$, it is easy to see that $\tau':\{1,\dots, 2(n'-1)\}\to \{1,\dots, n'-1\}$, and for every $k\in \{1,\dots, n'-1\}$, there exists $1\leq i<j\leq 2(n'-1)$ such that $\tau'(i)=\tau'(j)=k$.

In the next step, we prove that $\tau'(i)>\iota_{\beta'}(i-|\alpha'|)=\iota_{\beta'}(i+2-|\alpha|-\beta_1)$ for all $i\in\{|\alpha'|+1,\dots, 2(n'-1)\}=\{|\alpha|-1,\dots, 2(n'-1)\}$. Choose such an $i$. Noticing that $j_2\leq |\alpha|$, we have $i\geq |\alpha|-1\geq j_2-1$ and thus $i+2\geq |\alpha|+1$. As a consequence,
\[
\tau'(i)=\tau(i+2)-1>\iota_{\beta}(i+2-|\alpha|)-1.
\]
On the other hand, $\beta'_{\iota_{\beta'}(i+2-|\alpha|-\beta_1)}$ is the $(i+2-|\alpha|-\beta_1)$-th non-zero coordinate of $\beta'$. This yields that $\beta_{\iota_{\beta'}(i+2-|\alpha|-\beta_1)+1}=\beta'_{\iota_{\beta'}(i+2-|\alpha|-\beta_1)}$ is the $(i+2-|\alpha|)$-th non-zero coordinate of $\beta$. In other words, 
\[
\iota_{\beta'}(i+2-|\alpha|-\beta_1)+1=\iota_{\beta}(i+2-|\alpha|).
\] 
It follows that $\tau'(i)>\iota_{\beta'}(i-|\alpha'|)=\iota_{\beta'}(i+2-|\alpha|-\beta_1)$ for all $i\in\{|\alpha'|+1,\dots, 2(n'-1)\}=\{|\alpha|-1,\dots, 2(n'-1)\}$. 
Consequently, we have $\tau\in \cK_{n-1,n'-1}^{\alpha',\beta'}$, and thus $\cM$ maps $\cJ_{n,n'}$ to $\cJ_{n,n'}'$. 

In reverse, for any $(i,j,\alpha',\beta',\tau')\in \cJ_{n,n'}'$, we can also find a unique $(\alpha,\beta,\tau)\in \cJ_{n,n'}$ such that $\cM(\alpha,\beta,\tau)=(i,j,\alpha',\beta',\tau')$. This proves that the map $\cM$ is a bijection on $\cJ_{n,n'}$ with values in $\cJ_{n,n'}'$. The proof of this lemma is complete.
\end{proof}
Denote by $|\cJ_{n,n'}|$ the number of elements in $\cJ_{n,n'}$. We have the next lemma as a consequence of Lemma \ref{lmm_cat}.
\begin{lemma}\label{lmm_ncj}
Let $\cJ_{n,n'}$ be defined as in \eqref{def_jkm} with some positive integer $n$ and nonnegative integer $n'\leq n-1$. Then,
\begin{align}\label{for_cjit}
|\cJ_{n,n'}|=\frac{n!(n-1)!}{2^{n'}(n-n')!(n-n'-1)!}\,, 
\end{align}
where by convention $0!=1$.
\end{lemma}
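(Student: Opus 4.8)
The plan is to turn the bijection of Lemma~\ref{lmm_cat} into a recursion in the pair $(n,n')$ and then solve it by a telescoping product, with the count $|\cJ_{n,0}|=1$ as the base case. Since Lemma~\ref{lmm_cat} exhibits a bijection $\cM:\cJ_{n,n'}\to\cJ_{n,n'}'$ and, by definition,
\[
\cJ_{n,n'}'=\bigl\{(i,j,\alpha,\beta,\tau):1\le i<j\le n,\ (\alpha,\beta,\tau)\in\cJ_{n-1,n'-1}\bigr\},
\]
we immediately get, for every $n\ge 2$ and every $n'\in\{1,\dots,n-1\}$,
\[
|\cJ_{n,n'}|=|\cJ_{n,n'}'|=\binom{n}{2}\,|\cJ_{n-1,n'-1}|=\frac{n(n-1)}{2}\,|\cJ_{n-1,n'-1}|.
\]
For the base case, recall that by the convention set just before \eqref{def_jkm} one has $\cJ_{m,0}=\{(\mathbf 0_m,\partial,\partial)\}$ for every $m\ge 1$, so $|\cJ_{m,0}|=1$.

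Now fix $n\ge 1$ and $n'\in\{0,\dots,n-1\}$; the case $n'=0$ is the base case, so assume $n'\ge 1$. Applying the recursion $n'$ times — which is legitimate because at the $k$-th step, $k=0,\dots,n'-1$, the relevant index pair is $(n-k,n'-k)$ with $1\le n'-k\le n-k-1$, so the hypotheses of Lemma~\ref{lmm_cat} are met — we obtain
\[
|\cJ_{n,n'}|=\Bigl(\prod_{k=0}^{n'-1}\frac{(n-k)(n-k-1)}{2}\Bigr)\,|\cJ_{n-n',0}|
=\frac{1}{2^{n'}}\prod_{k=0}^{n'-1}(n-k)(n-k-1).
\]
Splitting the product,
\[
\prod_{k=0}^{n'-1}(n-k)=n(n-1)\cdots(n-n'+1)=\frac{n!}{(n-n')!},\qquad
\prod_{k=0}^{n'-1}(n-k-1)=(n-1)(n-2)\cdots(n-n')=\frac{(n-1)!}{(n-n'-1)!},
\]
where $(n-n'-1)!$ is well defined since $n'\le n-1$. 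Multiplying these two identities and dividing by $2^{n'}$ yields exactly \eqref{for_cjit}.

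\textbf{Main difficulty.} There is no serious obstacle here: once Lemma~\ref{lmm_cat} is in hand the argument is a one-line recursion plus a telescoping product. The only points requiring minor care are (i) checking that the recursion may be iterated all the way down, i.e.\ that $1\le n'-k\le n-k-1$ holds for each $k=0,\dots,n'-1$ so that Lemma~\ref{lmm_cat} applies at every level, and (ii) correctly identifying the terminal term as $|\cJ_{n-n',0}|=1$ rather than continuing past $n'=0$. Both are immediate from the standing assumption $n'\le n-1$.
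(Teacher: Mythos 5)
Your proof is correct and follows exactly the same route as the paper's: use the bijection of Lemma~\ref{lmm_cat} to get the recursion $|\cJ_{n,n'}|=\tfrac{1}{2}n(n-1)|\cJ_{n-1,n'-1}|$, then iterate down to the base case $|\cJ_{n-n',0}|=1$. You merely make the telescoping product explicit where the paper says "follows by iteration," which is a harmless elaboration.
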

\begin{proof}
By definition, we know that $\cJ_{n,0}=\{\mathbf{0}_n,\partial,\partial\}$ for all $n\geq 1$. This yields that $|\cJ_{n,0}|=1$ which coincides with \eqref{for_cjit}. It suffices to show the case $n'\geq 1$. By Lemma \ref{lmm_cat}, we can write
\begin{align*}
|\cJ_{n,n'}|=\frac{1}{2}n(n-1)|\cJ_{n-1,n'-1}|.
\end{align*}
Then, \eqref{for_cjit} follows by iteration. The proof of this lemma is complete.
\end{proof}

\subsection{Proof of Theorem {\ref{coro_mnt}}}\label{ssec_pfmnt}
In this subsection, we provide the proof of Theorem \ref{coro_mnt}. In fact, we can show a generalized version of this theorem (see Proposition \ref{prop_mnt} below). Let us start this subsection by introducing the following notation.

Let $n\in\NN$, and let $n'\in\{1,\dots, n-1\}$. Fix $(\alpha,\beta,\tau)\in \cJ_{n,n'}$. For any $\bt_{n}\in \mathbb{T}_n^T$, $\bs_{n'}\in \mathbb{T}_{n'}^{t_n}$, $\bx_{n}\in \RR^n$ and $\bz_{n'}\in \RR^{n'}$, we define 
the following expressions
\begin{align}\label{def_A}
A_{n,n'}^{\alpha}(\bt_n,\bx_n)=\prod_{i=1}^{n}\Big(\int_{\RR}dzp_{t_i}(x_i-z)X_0(z)\Big)^{1-\alpha_i},
\end{align}
\begin{align}\label{def_B}
B_{n,n'}^{\beta}(\bs_{n'},\bz_{n'})=\prod_{i=1}^{n'}\Big(\int_{\RR}dz p_{s_i}(z_i-z)X_0(z)\Big)^{1-\beta_i},
\end{align}
\begin{align}
C_{n,n'}^{\alpha,\tau}(\bt_n,\bx_n,\bs_{n'},\bz_{n'})=\prod_{i=1}^{|\alpha|}p(t_{\iota_{\alpha}(i)}-s_{\tau(i)},x_{\iota_{\alpha}(i)}-z_{\tau(i)}),
\end{align}
\begin{align}\label{def_D}
D_{n,n'}^{\beta,\tau}(\bs_{n'},\bz_{n'})=\prod_{i=|\alpha|+1}^{2n'}p(s_{\iota_{\beta}(i-|\alpha|)}-s_{\tau(i)}, z_{\iota_{\beta}(i-|\alpha|)}-z_{\tau(i)}),
\end{align}
and
\begin{align}\label{def_E}
E_{n,n'}^{\alpha,\beta}(\bs_{n'},\bz_{n'})=\prod_{i=1}^{n'}\widehat{\sigma}(s_i,z_i)^2=\prod_{i=1}^{n'}\sigma(s_i,z_i,\PP_{X_{s_i}(z_i)})^2.
\end{align}
 By convention, we write 
\[
A_{n,0}^{\alpha}(\bt_n,\bx_n)=\prod_{i=1}^{n}\Big(\int_{\RR}dzp_{t_i}(x_i-z)X_0(z)\Big),
\]
and $B^{\beta}_{n,0}=B^{\alpha,\tau}_{n,0}=D^{\beta,\tau}_{n,0}=E^{\alpha,\beta}_{n,0}=1$.  

\begin{proposition}\label{prop_mnt}
Suppose that $X_0\in\cM_F(\RR)$ satisfies Hypothesis \ref{hyp_x0}. Let $n\in\NN$, and let $Y$ be given as in \eqref{def_yst}. Then, for any $\bt_n\in\mathbb{T}_n^T$, and $\bx_n \in\RR^n$,
\begin{align}\label{for_mmt}
\EE \Big[\prod_{i=1}^{n}Y^{t_i}_{t_n}(x_i) \Big]=&\sum_{n'=0}^{n-1}\sum_{(\alpha,\beta,\tau)\in \cJ_{n,n'}}A_{n,n'}^{\alpha}(\bt_n,\bx_n)\int_{\mathbb{T}_{n'}^{t_n}}d\bs_{n'}\int_{\RR^{n'}}d\bz_{n'}B_{n,n'}^{\beta}(\bs_{n'},\bz_{n'})\nonumber\\
&\times C_{n,n'}^{\alpha,\tau}(\bt_n,\bx_n,\bs_{n'},\bz_{n'})D_{n,n'}^{\beta,\tau}(\bs_{n'},\bz_{n'})E_{n,n'}^{\alpha,\beta}(\bs_{n'},\bz_{n'}),
\end{align}
where $\cJ_{n}$ and $A$ - $E$ are defined as in \eqref{def_jkm} and \eqref{def_A}-\eqref{def_E} respectively, $\mathbb{T}_{n}^t$ is defined as in \eqref{def_dtk}.
\end{proposition}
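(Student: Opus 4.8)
The plan is to prove Proposition~\ref{prop_mnt} by induction on the number of factors $n$, the inductive step being a single application of It\^o's formula to the real-valued semimartingales $s\mapsto Y^{t_i}_s(x_i)$ on $[0,t_n]$, combined with the combinatorial re-indexing provided by Lemma~\ref{lmm_cat}. Theorem~\ref{coro_mnt} will then follow from Proposition~\ref{prop_mnt} by specializing $t_1=\dots=t_n=t$ and $x_1=\dots=x_n=x$, since $Y^t_t(x)=X_t(x)$ and $\mathbb{T}_{n'}^{t_n}=\mathbb{T}_{n'}^t$. Throughout I will use the a priori estimate $\sup_{(s,z)\in[0,T]\times\RR}\EE[X_s(z)^p]<\infty$ for all $p\geq1$, which I would obtain either by passing the uniform bounds of Lemma~\ref{lmm_ubn} to the limit $\delta\downarrow0$ with Fatou's lemma, or self-containedly by a moment recursion based on the $L^p$ Burkholder--Davis--Gundy inequality, the boundedness of $\sigma$, and the trivial first-moment bound $\EE[X_s(z)]=\int_\RR p_s(z-y)X_0(y)\,dy\leq\|X_0\|_\infty$. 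This finiteness is precisely what turns the stochastic-integral terms below into genuine martingales rather than merely local ones.

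\textbf{One-step reduction.} Fix $\bt_n\in\mathbb{T}_n^T$ (so $0<t_n<\dots<t_1<T$) and $\bx_n\in\RR^n$. For $s\in[0,t_n]$ each $s\mapsto Y^{t_i}_s(x_i)$ is a continuous $L^p$ semimartingale whose bounded-variation part vanishes and whose martingale part is the white-noise integral $\int_0^s\int_\RR p_{t_i-r}(x_i-y)\widehat{\sigma}(r,y)\sqrt{X_r(y)}\,W(dr,dy)$, where $\widehat{\sigma}(r,y)=\sigma(r,y,\PP_{X_r(y)})$ is \emph{deterministic} and bounded (here $t_i-r\geq t_i-t_n>0$, so $p_{t_i-r}$ is bounded and the integral is well defined); its cross-variation with $Y^{t_j}_\cdot(x_j)$ has density $\int_\RR p_{t_i-s}(x_i-z)p_{t_j-s}(x_j-z)\widehat{\sigma}(s,z)^2X_s(z)\,dz$ in $s$. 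Applying the It\^o product rule on $[0,t_n]$, taking expectations so that the stochastic-integral terms drop out, and using $X_s(z)=Y^s_s(z)$ (that is, \eqref{def_yst} at $s=t$), I obtain
\begin{align*}
\EE\Big[\prod_{i=1}^n Y^{t_i}_{t_n}(x_i)\Big]
={}&\prod_{i=1}^n\int_\RR p_{t_i}(x_i-z)X_0(z)\,dz
+\sum_{1\leq i<j\leq n}\int_0^{t_n}\!ds_1\int_\RR\!dz_1\;p_{t_i-s_1}(x_i-z_1)\,p_{t_j-s_1}(x_j-z_1)\\
&\times\widehat{\sigma}(s_1,z_1)^2\,\EE\Big[Y^{s_1}_{s_1}(z_1)\prod_{\substack{1\leq k\leq n\\k\neq i,j}}Y^{t_k}_{s_1}(x_k)\Big].
\end{align*}
The first term is the $n'=0$ summand $A^{\mathbf{0}_n}_{n,0}(\bt_n,\bx_n)$ of \eqref{for_mmt}. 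In each remaining term the common evaluation time $s_1<t_n$ is the smallest of the $n-1$ ``frozen'' times $\{s_1\}\cup\{t_k\}_{k\neq i,j}$, so after arranging those in decreasing order the reduced expectation is an instance of \eqref{for_mmt} with $n-1$ factors; I then substitute the inductive hypothesis.

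\textbf{Combinatorial matching.} It remains to check that the resulting triple sum --- over pairs $1\leq i<j\leq n$, over $m\in\{0,\dots,n-2\}$, and over $(\alpha',\beta',\tau')\in\cJ_{n-1,m}$ --- reassembles, with $n'=m+1$, into $\sum_{n'=1}^{n-1}\sum_{(\alpha,\beta,\tau)\in\cJ_{n,n'}}(\cdots)$. This is exactly what the bijection $\cM:\cJ_{n,n'}\to\cJ_{n,n'}'=\{(i,j,\alpha',\beta',\tau'):1\leq i<j\leq n,\ (\alpha',\beta',\tau')\in\cJ_{n-1,n'-1}\}$ of Lemma~\ref{lmm_cat} achieves. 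Reading off $\cM^{-1}$: the pair $(i,j)$ supplies the two original indices with $\tau(\cdot)=1$ and $s_1$ becomes the first (largest) branch time $s_1$; the two kernels $p_{t_i-s_1}(x_i-z_1)$, $p_{t_j-s_1}(x_j-z_1)$ produced by the cross-variation become precisely the two factors of $C^{\alpha,\tau}_{n,n'}$ indexed by $\tau(\cdot)=1$; $\widehat{\sigma}(s_1,z_1)^2$ becomes the $i=1$ factor of $E^{\alpha,\beta}_{n,n'}$ (cf.\ \eqref{def_E}); the last coordinate of $\alpha'$ equals $\beta_1$ (cf.\ \eqref{def_alph'}), so the $z_1$-slot of the sub-problem's ``$A'$-factor'' is $\int_\RR p_{s_1}(z_1-z)X_0(z)\,dz$ exactly when $\beta_1=0$, contributing the $z_1$-factor of $B^{\beta}_{n,n'}$, and otherwise $z_1$ is the lower endpoint of a $D^{\beta,\tau}_{n,n'}$-factor coming from a deeper branch point; the remaining $A',B',C',D',E'$ re-index verbatim with $s'_\ell=s_{\ell+1}$, $z'_\ell=z_{\ell+1}$ and $\tau$ shifted as in \eqref{def_tau'2}; and the integration domain $\{(s_1,z_1,\bs'_m,\bz'_m):0<s_1<t_n,\ \bs'_m\in\mathbb{T}_m^{s_1}\}$ is $\mathbb{T}_{m+1}^{t_n}\times\RR^{m+1}$. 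Summing over $\cJ_{n,n'}'$ (equivalently over $\cJ_{n,n'}$) and adding the $n'=0$ term gives \eqref{for_mmt} for $n$; the base case $n=1$ is immediate from \eqref{def_yst} since the martingale part has zero mean.

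\textbf{Main obstacle.} Two points require care. First, the a priori integrability that upgrades the It\^o expansion to an identity between honest expectations --- this is why uniform moment bounds on $X_s(z)$ must be in hand before everything else. Second, and more laborious, is the term-by-term verification that the relabeling dictated by $\cM$ sends each of $A,B,C,D,E$ to its correct slot with the correct arguments; the one genuinely delicate point is that the variable $z_1$ introduced by the It\^o step joins the branch-to-initial-condition family $B$ precisely when $\beta_1=0$ and is otherwise absorbed into $D$. Lemma~\ref{lmm_cat} is exactly the device that reduces this to a finite, checkable correspondence rather than an open-ended one.
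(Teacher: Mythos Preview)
Your proposal is correct and follows essentially the same approach as the paper: induction on $n$, with the inductive step given by the It\^o product rule on $[0,t_n]$ (yielding the paper's decomposition $I_0+I_1$, equations \eqref{for_mmt1}--\eqref{def_I1}), followed by the inductive hypothesis and the combinatorial re-indexing via the bijection $\cM$ of Lemma~\ref{lmm_cat} (the paper spells out the term-by-term identities as \eqref{for_ita}--\eqref{for_ite}). Your explicit attention to the a priori moment bounds needed to kill the stochastic-integral terms is a point the paper leaves implicit.
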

\begin{proof}
We prove this proposition by induction in $n$. If $n=1$, it is clear that $n'=0$ and thus
\begin{align*}
\EE \big[Y^{t_1}_{t_1}(x_1)\big]=&\EE \big[X_{t_1}(x_1)\big]=\int_{\RR}dz p_{t_1}(x_1-z)X_0(z),
\end{align*}
coincides with \eqref{for_mmt}. Suppose that $n\geq 2$. Recall that $Y$ satisfies the mild formulation \eqref{def_yst}. Thus, one can deduce by It\^{o}'s formula that
\begin{align}\label{for_mmt1}
&\EE \Big[\prod_{i=1}^{n}Y^{t_i}_{t_n}(x_i)\Big]=I_0+I_1,
\end{align}
where
\begin{align}
I_0=&\prod_{i=1}^{n}\int_{\RR}dzp_{t_i}(x_i-z)X_0(z)=A_{n,n'}^{\alpha}(\bt_n,\bx_n)\Big|_{n'=0},
\end{align}
and
\begin{align}\label{def_I1}
I_1=&\sum_{1\leq k_1<k_2\leq n}\int_0^{t_n}ds\int_{\RR}dz p_{t_{k_1}-s}(x_{k_1}-z)p_{t_{k_2}-s}(x_{k_2}-z)\widehat{\sigma}(s,z)^2\\
&\times  \EE \bigg[Y_s^s(z)\prod_{\substack{1\leq i\leq n\\ i\notin \{k_1,k_2\}}}Y^{t_i}_{s}(x_i)\bigg].\nonumber
\end{align}
Applying the induction hypothesis, we can write the expectation in \eqref{def_I1} as follows,
\begin{align}\label{for_mnt3}
\EE \bigg[&Y_s^s(z)\prod_{\substack{1\leq i\leq n\\ i\notin \{k_1,k_2\}}}Y^{t_i}_{s}(x_i)\bigg]=\sum_{n'=1}^{n-1}\sum_{(\alpha',\beta',\tau')\in \cJ_{n-1,n'-1}}A_{n-1,n'-1}^{\alpha'}\big((\bt_n^{k_1,k_2},s),(\bx_n^{k_1,k_2},z)\big)\\
&\times \int_{\mathbb{T}_{n'-1}^{t_n}}d\bs_{n'-1}\int_{\RR^{n'-1}}d\bz_{n'-1}B_{n-1,n'-1}^{\beta'}(\bs_{n'-1},\bz_{n'-1})\nonumber\\
 &\qquad\times C_{n-1,n'-1}^{\alpha',\tau'}\big((\bt_n^{k_1,k_2},s),(\bx_n^{k_1,k_2},z),\bs_{n'-1},\bz_{n'-1}\big)D_{n-1,n'-1}^{\beta',\tau'}(\bs_{n'-1},\bz_{n'-1})\nonumber\\
&\qquad\times E_{n-1,n'-1}^{\alpha',\beta'}(\bs_{n'-1},\bz_{n'-1}),\nonumber
\end{align}
where $\bt_n^{k_1,k_2}=(t_1,\dots, t_{k_1-1},t_{k_1+1},\dots, t_{k_2-1},t_{k_2+1},\dots, t_n)\in [0,T]^{n-2}$ and $\bx_n^{k_1,k_2}$ is defined in the same way.

Let $\cM$ be the bijection defined as in Lemma \ref{lmm_cat}. Choose $(\alpha,\beta,\tau)\in \cJ_{n,n'}$ with $n'\geq 1$. Let $(k_1,k_2,\alpha',\beta',\tau')=\cM(\alpha,\beta,\tau)\in \cJ_{n,n'}'$. Then, due to Lemma \ref{lmm_cat}, there exist $1\leq j_1<j_2\leq |\alpha|$ such that $\iota_{\alpha}(j_1)=k_1$, $\iota_{\alpha}(j_2)=k_2$, with $\tau(j_1)=\tau(j_2)=1$. This also yields that $\alpha_{k_1}=\alpha_{k_2}=1$. Recall that $\alpha'$, $\beta'$ and $\tau'$ are defined as in \eqref{def_alph'}-\eqref{def_tau'2} respectively. As a result, we deduce that
\begin{align}\label{for_ita}
&A^{\alpha'}_{n-1,n'-1}\big((\bt_n^{k_1,k_2},s),(\bx_n^{k_1,k_2},z)\big)\\
=&\prod_{\substack{1\leq i\leq n\\i\notin \{k_1,k_2\}}}\Big(\int_{\RR}dzp_{t_i}(x_i-z)X_0(z)\Big)^{1-\alpha_i}\Big(\int_{\RR}dy p_{s}(z-y)X_0(y)\Big)^{1-\beta_1}\nonumber\\
=&A_{n,n'}^{\alpha}(\bt_n,\bx_n)\Big(\int_{\RR}dy p_{s}(z-y)X_0(y)\Big)^{1-\beta_1},\nonumber
\end{align}
\begin{align}\label{for_itb}
&B_{n,n'}^{\beta}\big((s,\bs_{n'-1}),(z,\bz_{n'-1})\big)\\
=&\Big(\int_{\RR}dy p_{s}(z-y)X_0(y)\Big)^{1-\beta_1}\prod_{i=2}^{n'}\Big(\int_{\RR}dy p_{s_{i-1}}(z_{i-1}-y)X_0(y)\Big)^{1-\beta_i}\nonumber\\
=&B_{n-1,n'-1}^{\beta'}(\bs_{n'-1},\bz_{n'-1})\Big(\int_{\RR}dy p_{s}(z-y)X_0(y)\Big)^{1-\beta_1},\nonumber
\end{align}
\begin{align}\label{for_itc}
&C_{n,n'}^{\alpha,\tau}\big(\bt_n,\bx_n,(s,\bs_{n'-1}),(z,\bz_{n'-1})\big)p(s-s_{\tau(|\alpha|+1)-1},z-z_{\tau(|\alpha|+1)-1})^{\beta_1}\\
=&p(s-s_{\tau'(|\alpha'|)},z-z_{\tau'(|\alpha'|)})^{\beta_1}\prod_{i=1}^{|\alpha'|-\beta_1}p(t_{\iota_{\alpha'}(i)}-s_{\tau'(i)}, x_{\iota_{\alpha'}(i)}-z_{\tau'(i)})\nonumber\\
&\times p(t_{k_1}-s,x_{k_1}-z)p(t_{k_2}-s,x_{k_2}-z)\nonumber\\
=&C_{n-1,n'-1}^{\alpha',\tau'}\big((\bt_n^{k_1,k_2},s),(\bx_n^{k_1,k_2},z),\bs_{n'-1},\bz_{n'-1}\big)\nonumber\\
&\times p(t_{k_1}-s,x_{k_1}-z)p(t_{k_2}-s,x_{k_2}-z),\nonumber
\end{align}
\begin{align}\label{for_itd}
&D_{n,n'}^{\beta,\tau}\big((s,\bs_{n'-1}),(z,\bz_{n'-1})\big)\\
=&p(s-s_{\tau(|\alpha|+1)-1},z-z_{\tau(|\alpha|+1)-1})^{\beta_1}\prod_{i=|\alpha'|+1}^{|\alpha'|+|\beta'|}p(s_{\iota_{\beta'}(i-|\alpha'|)}-s_{\tau'(i)}, z_{\iota_{\beta'}(i-|\alpha|)}-z_{\tau(i)})\nonumber\\
=&D_{n-1,n'-1}^{\beta',\tau'}(\bs_{n'-1},\bz_{n'-1})p(s-s_{\tau(|\alpha|+1)-1},z-z_{\tau(|\alpha|+1)-1})^{\beta_1},\nonumber
\end{align}
and
\begin{align}\label{for_ite}
E_{n,n'}^{\alpha,\beta}\big((s,\bs_{n'-1}),(z,\bz_{n'-1})\big)=\widehat{\sigma}(s,z)^2E_{n-1,n'-1}^{\alpha',\beta'}(\bs_{n'-1},\bz_{n'-1}).
\end{align}
Combining equations \eqref{def_I1}-\eqref{for_ite}, we get
\begin{align}\label{propmnt1}
I_1=&\sum_{n'=1}^{n-1}\sum_{(\alpha,\beta,\tau)\in \cJ_{n,n'}}A_{n,n'}^{\alpha}(\bt_n,\bx_n)\int_{\mathbb{T}_{n'}^{t_n}}d\bs_{n'}\int_{\RR^{n'}}d\bz_{n'}B_{n,n'}^{\beta}(\bs_{n'},\bz_{n'})\\
& \times C_{n,n'}^{\alpha,\tau}(\bt_n,\bx_n,\bs_{n'},\bz_{n'}) D_{n,n'}^{\beta,\tau}(\bs_{n'},\bz_{n'})E_{n,n'}^{\alpha,\beta}(\bs_{n'},\bz_{n'}).\nonumber
\end{align}
Therefore, formula \eqref{for_mmt} follows from \eqref{for_mmt1}-\eqref{def_I1} and \eqref{propmnt1} and Lemma \ref{lmm_cat}. The proof of this Proposition is complete.
\end{proof}
Having Proposition \ref{prop_mnt}, Theorem \ref{coro_mnt} follows immediately.
\begin{proof}[Proof of Theorem \ref{coro_mnt}]
Taking $(t_1,x_1)=\dots=(t_n,x_n)=(t,x)$ as in Proposition \ref{prop_mnt} and writing $A$ - $E$ explicitly using \eqref{def_A}-\eqref{def_E}, then we get equality \eqref{for_mmt0}. This completes the proof of Theorem \ref{coro_mnt}.
\end{proof}

\subsection{Some estimates}\label{ssec_mmtest}
In this subsection, we provide some estimates for expressions related to moments of $X_t(x)$. They will be used in the proof of the uniqueness of solutions to equation \eqref{sbm} under certain hypotheses (see Section \ref{sec_unq}).

\begin{lemma}\label{lmm_gau}
Suppose that $X_0\in \cM_F(\RR)$ satisfies Hypothesis \ref{hyp_x0}. Let $n\geq 2$ be a positive integer, and let $n'\in \{1,\dots, n-1\}$. Fix $(\alpha,\beta,\tau)\in \cJ_{n,n'}$. Let $B$ - $D$ be given as in \eqref{def_B}-\eqref{def_D}, $\bt_n\in \mathbb{T}^T_n$ and $\bx_n\in \RR$.  Then, for any $1\leq j\leq n'$, and $s\in (0,t_n)$,
\begin{align}\label{lmmgau1}
\int_{\mathbb{T}^{s}_{n'-j}}d\bs_{j+1:n'}\int_{\mathbb{T}^{s,t_n}_{j-1}}d\bs_{j-1}&\int_{\RR^{n'}}d\bz_{n'}B_{n,n'}^{\beta}(\bs_{n'}^j(s),\bz_{n'})C_{n,n'}^{\alpha,\tau}(\bt_n,\bx_n,\bs_{n'}^j(s),\bz_{n'})\\
&\times D_{n,n'}^{\beta,\tau}(\bs_{n'}^j(s),\bz_{n'})\leq \frac{c_1c_2^n(t_n-s)^{\frac{1}{2}j-1}s^{\frac{1}{2}(n'-j)}}{\Gamma(\frac{1}{2}j)\Gamma(\frac{1}{2}(n'-j)+1)},\nonumber
\end{align}
where $\bs_{j+1:n'}=(s_{j+1},\dots, s_{n'})$,  $\bs_{k'}^j(s)=(s_1,\dots, s_{j-1},s, s_{j+1},\dots, s_{k'})$, $\mathbb{T}^{s,t_n}_{j-1}=\{(s_1,\dots, s_{j-1}):s\leq s_{j-1}\leq \dots \leq s_1\leq t_n\}$, and $c_1,c_2>0$ depending on $\|X_0\|_{\infty}$.
\end{lemma}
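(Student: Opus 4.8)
The plan is to turn \eqref{lmmgau1} into a purely deterministic estimate by absorbing the initial data into constants, eliminating the spatial variables $\bz_{n'}$ through the heat semigroup, and then evaluating the remaining time integral over the two simplices by iterating a Beta integral. (The integrand of \eqref{lmmgau1} does not contain $E_{n,n'}^{\alpha,\beta}$, so no bound on $\sigma$ is needed here.) First, $\int_{\RR}p_{s_i}(z_i-z)X_0(z)\,dz\le\|X_0\|_\infty$, so each factor of $B_{n,n'}^{\beta}$ (those with $\beta_i=0$) is at most $\|X_0\|_\infty$; more useful is that integrating such a $B$-factor against one further heat kernel, via $\int_{\RR}p_a(u-z)p_b(z-v)\,dz=p_{a+b}(u-v)$, again gives a quantity $\le\|X_0\|_\infty$. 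The product $C_{n,n'}^{\alpha,\tau}D_{n,n'}^{\beta,\tau}$ consists of $2n'$ Gaussian kernels, and $(\alpha,\beta,\tau)\in\cJ_{n,n'}$ is precisely a binary branching tree on them: by property (i) of $\cK_{n,n'}^{\alpha,\beta}$ each $z_k$ is the common endpoint of exactly two kernels running toward the leaves (the points $x_{\iota_\alpha(\cdot)}$ at times $t_{\iota_\alpha(\cdot)}\ge t_n$, or toward later branch points), while property (ii), $\tau(i)>\iota_\beta(i-|\alpha|)$, makes every time increment strictly positive and places the smaller time at $z_{\tau(i)}$. I would integrate out the $z$'s in tree order (leaves toward the root $X_0$ --- the order in which Lemma \ref{lmm_cat} peels off $z_1$, the first coalescence, at the largest time $s_1$): at each branch point, bound all but one of the (at most three) kernels meeting there by $p_a(\cdot)\le(2\pi a)^{-1/2}$, and remove $z_k$ by $\int_{\RR}p_a(u-z)\,dz=1$, or, when $\beta_k=0$, against its $B$-factor to produce a factor $\le\|X_0\|_\infty$.

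Once all of $\bz_{n'}$ has been eliminated, what is left is a constant (at most $n'$ powers of $\|X_0\|_\infty$ and at most $2n'$ of $(2\pi)^{-1/2}$) times a product of factors $(t_{\iota_\alpha(\cdot)}-s_k)^{-1/2}$ and $(s_m-s_k)^{-1/2}$. Using $t_{\iota_\alpha(\cdot)}\ge t_n$ and monotonicity of $a\mapsto a^{-1/2}$, together with $(s_m-s_k)^{-1/2}\le(s_{k-1}-s_k)^{-1/2}$ for $m<k$, each such factor is dominated by the inverse square root of a \emph{consecutive} gap of the ordered list $t_n>s_1>\cdots>s_{j-1}>s=s_j>s_{j+1}>\cdots>s_{n'}>0$, and the tree structure charges each of the $j$ gaps between $t_n$ and $s$, and each of the $n'-j$ gaps between $s$ and $s_{n'}$, exactly once. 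It then remains to bound $\int_{\mathbb{T}^{s,t_n}_{j-1}}(t_n-s_1)^{-1/2}(s_1-s_2)^{-1/2}\cdots(s_{j-1}-s)^{-1/2}\,d\bs_{j-1}$ and $\int_{\mathbb{T}^{s}_{n'-j}}(s-s_{j+1})^{-1/2}\cdots(s_{n'-1}-s_{n'})^{-1/2}\,d\bs_{j+1:n'}$; by the identity $\int_0^t(t-r)^{a-1}r^{b-1}\,dr=\Gamma(a)\Gamma(b)\Gamma(a+b)^{-1}t^{a+b-1}$ applied iteratively (equivalently, the semigroup property of Riemann--Liouville kernels), the first equals $\Gamma(\tfrac12)^{j}\Gamma(\tfrac{j}{2})^{-1}(t_n-s)^{\frac{j}{2}-1}$ and the second equals $\Gamma(\tfrac12)^{n'-j}\Gamma(\tfrac{n'-j}{2}+1)^{-1}s^{\frac{n'-j}{2}}$, the last variable $s_{n'}$ integrating freely down to $0$, which produces the ``$+1$'' in the second $\Gamma$. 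Collecting the $\Gamma(\tfrac12)$'s and the other constants into $c_1c_2^{n}$ gives \eqref{lmmgau1}.

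The step needing the most care is the spatial bookkeeping: at each branch point one must choose which kernels to discard via $(2\pi a)^{-1/2}$ and which to keep for the convolution so that every $z_k$ is genuinely eliminated \emph{and} the surviving bounds match injectively onto the consecutive time gaps --- were some gap charged twice, the corresponding Beta integral would diverge. For a clean rigorous argument I would instead induct on $n'$ through the bijection $\cM$ of Lemma \ref{lmm_cat}: removing $z_1$ (together with, when $j\ge2$, the integration over $s_1\in(s,t_n)$) reduces the problem to the datum $(\alpha',\beta',\tau')\in\cJ_{n-1,n'-1}$ with parameters $(n-1,n'-1,j-1)$ on the window $[s,s_1]$, and the $s_1$-integration supplies exactly the next factor of the Beta recursion; the base cases $j=1$ (where $s_j=s_1=s$ is frozen) and $j=n'$ are elementary, and a single adjustment of $c_2$ (any $c_2\ge\|X_0\|_\infty/\sqrt2$ suffices, together with a factor $c_1$ for the base cases) closes the induction.
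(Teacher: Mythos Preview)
Your proposal is correct, and the rigorous route you settle on in the final paragraph --- induction on $n'$ via the bijection $\cM$ of Lemma \ref{lmm_cat}, peeling off $z_1$ and (for $j\ge2$) the $s_1$-integration to reduce to the $(n-1,n'-1,j-1)$ datum, with the $j=1$ case handled separately because $s_1=s$ is frozen --- is exactly the paper's proof. The paper's base case is $n=2$, and the inductive step uses the decompositions \eqref{for_itb}--\eqref{for_itd} together with the spatial bound $\int_{\RR}p_{t_{k_1}-s_1}(x_{k_1}-z_1)p_{t_{k_2}-s_1}(x_{k_2}-z_1)\,dz_1\le(4\pi)^{-1/2}(t_n-s_1)^{-1/2}$ to produce the factor $(t_n-s_1)^{-1/2}$ feeding the Beta recursion; the paper's constant requirement is $c_2\ge(4\pi)^{-1/2}\Gamma(\tfrac12)$, not $\|X_0\|_\infty/\sqrt2$, but this is cosmetic.

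Your preliminary ``direct'' approach --- eliminating all $z_k$ in tree order and matching the surviving $(2\pi a)^{-1/2}$ factors bijectively to consecutive gaps --- is a nice way to see \emph{why} the bound should hold, and the Beta-integral evaluation of the two simplex integrals is exactly what the induction is computing step by step. You are right that the delicate point is proving the injective matching onto consecutive gaps; this is essentially what the induction \emph{is}, since each application of $\cM$ peels off precisely one gap (either $(t_n-s_1)$ when $j>1$ or $(s-s_2)$ when $j=1$), so the two arguments are really the same computation organised differently.
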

\begin{proof}
Denote by $LHS$ the left hand side of \eqref{lmmgau1}. We prove this lemma by induction in $n$. First, we prove \eqref{lmmgau1} for  $n=2$. We can write $\cJ_{2,1}=\{(\alpha,\beta,\tau)\}$, where $\alpha=(1,1)$, $\beta=(0)$ and $\tau:\{1,2\}\to \{1\}$ is given by $\tau(1)=\tau(2)=1$. Thus, under Hypothesis \ref{hyp_x0}, 
\begin{align*}
LHS=&\int_{\RR}dz_1\Big(\int_{\RR}dzp_s(z_1-z)X_0(z)\Big) p_{t_1-s}(x_1-z_1)p_{t_2-s}(x_2-z_1)\\
\leq& (2\pi)^{-\frac{1}{2}}\|X_0\|_{\infty}(t_1+t_2-2s)^{-\frac{1}{2}}\leq (4\pi)^{-\frac{1}{2}}\|X_0\|_{\infty}(t_2-s)^{-\frac{1}{2}}.
\end{align*}
This proves inequality \eqref{lmmgau1} for $n=2$.

In the next step, we prove inequality \eqref{lmmgau1} for any  $n>2$. Choose $n'\in \{1,\dots,n-1\}$. Let $(\alpha,\beta,\tau)\in \cJ_{n,n'}$, and let $(k_1,k_2,\alpha',\beta',\tau')=\cM(\alpha,\beta,\tau)$ with $\cM$ defined as in Lemma \ref{lmm_cat}.

 Assume that $j> 1$. Then, due to \eqref{for_itb}-\eqref{for_itd}, we have
\begin{align}\label{ine_jg1}
LHS=&\int_{\mathbb{T}^{s}_{n'-j}}d\bs_{j+1:n'}\int_{\mathbb{T}^{s,t_n}_{j-1}}d\bs_{j-1}\int_{\RR^{n'}}d\bz_{n'}\Big(\int_{\RR}dy p_{s_1}(z_1-y)X_0(y)\Big)^{1-\beta_1}\\
&\times p(t_{k_1}-s_1,x_{k_1}-z_1)p(t_{k_2}-s_1,x_{k_2}-z_1)B_{n-1,n'-1}^{\beta'}(\bs_{2:n'-1}^j(s),\bz_{2:n'})\nonumber\\
&\times C_{n-1,n'-1}^{\alpha',\tau'}\big((\bt_n^{k_1,k_2},s_1),(\bx_n^{k_1,k_2},z_1),\bs_{2:n'}^j(s),\bz_{2:n'}) D_{n-1,n'-1}^{\beta',\tau'}(\bs_{2:n'}^j(s),\bz_{2:n'}),\nonumber
\end{align}
where $\bs_{2:n'}^j(s)=(s_2,\dots, s_{j-1},s,s_{j-2},\dots,s_{n'})$.
Notice that the induction hypothesis implies that
\begin{align*}
&\int_{\mathbb{T}^{s}_{n'-j}}d\bs_{j+1:n'}\int_{\mathbb{T}^{s,s_1}_{j-2}}d\bs_{2:j-1}\int_{\RR^{n'-1}}d\bz_{2:n'}B_{n-1,n'-1}^{\beta'}(\bs_{2:n'-1}^j(s),\bz_{2:n'})\nonumber\\
&\times C_{n-1,n'-1}^{\alpha',\tau'}\big((\bt_n^{k_1,k_2},s_1),(\bx_n^{k_1,k_2},z_1),\bs_{2:n'}^j(s),\bz_{2:n'})D_{n-1,n'-1}^{\beta',\tau'}(\bs_{2:n'}^j(s),\bz_{2:n'})\\
\leq& \frac{c_1c_2^{n-1}(s_1-s)^{\frac{1}{2}(j-1)-1}s^{\frac{1}{2}(n'-j)}}{\Gamma(\frac{1}{2}(j-1))\Gamma(\frac{1}{2}(n'-j)+1)}.
\end{align*}
Combining this fact with the boundedness of $X_0$, we obtain the next inequality immediately,
\begin{align*}
LHS\leq &\frac{c_1c_2^{n-1}s^{\frac{1}{2}(n'-j)}}{\Gamma(\frac{1}{2}(j-1))\Gamma(\frac{1}{2}(n'-j)+1)}\int_s^{t_n} ds_1 (s_1-s)^{\frac{1}{2}(j-1)-1}\\
&\times\int_{\RR}dz_1p(t_{k_1}-s_1,x_{k_1}-z_1)p(t_{k_2}-s_1,x_{k_2}-z_1)\\
\leq &\frac{(4\pi)^{-\frac{1}{2}}c_1c_2^{n-1}s^{\frac{1}{2}(n'-j)}}{\Gamma(\frac{1}{2}(j-1))\Gamma(\frac{1}{2}(n'-j)+1)}\int_s^{t_n} ds_1 (s_1-s)^{\frac{1}{2}(j-1)-1}(t_n-s_1)^{-\frac{1}{2}}\\
=&\frac{(4\pi)^{-\frac{1}{2}}\Gamma(\frac{1}{2})c_1c_2^{n-1}s^{\frac{1}{2}(n'-j)}(t_n-s)^{\frac{1}{2}j-1}}{\Gamma(\frac{1}{2}j)\Gamma(\frac{1}{2}(n'-j)+1)}\leq \frac{c_1c_2^{n}s^{\frac{1}{2}(n'-j)}(t_n-s)^{\frac{1}{2}j-1}}{\Gamma(\frac{1}{2}j)\Gamma(\frac{1}{2}(n'-j)+1)},
\end{align*}
provided that $c_2\geq (4\pi)^{-\frac{1}{2}}\Gamma(\frac{1}{2})$.

On the other hand, if $j=1$, we can write the following equation analogous to \eqref{ine_jg1},
\begin{align*}
LHS&=\int_{\mathbb{T}^{s}_{n'-j}}d\bs_{j+1:n'}\int_{\mathbb{T}^{s,t_n}_{j-1}}d\bs_{j-1}\int_{\RR^{n'}}d\bz_{n'}\Big(\int_{\RR}dy p_{s_1}(z_1-y)X_0(y)\Big)^{1-\beta_1}\nonumber\\
&\times p(t_{k_1}-s_1,x_{k_1}-z_1) p(t_{k_2}-s_1,x_{k_2}-z_1)B_{n-1,n'-1}^{\beta'}(\bs_{2:n'-1},\bz_{2:n'})\nonumber\\
&\times C_{n-1,n'-1}^{\alpha',\tau'}\big((\bt_n^{k_1,k_2},s_1),(\bx_n^{k_1,k_2},z_1),\bs_{2:n'},\bz_{2:n'}\big) D_{n-1,n'-1}^{\beta',\tau'}(\bs_{2:n'},\bz_{2:n'}).
\end{align*}
By using the induction hypothesis again, we deduce that
\begin{align*}
&\int_{\mathbb{T}^{s_2}_{n'-2}}d\bs_{3:n'}\int_{\RR^{n'-1}}d\bz_{2:n'}B_{n-1,n'-1}^{\beta'}\big((s_2,\bs_{3:n'-1}^j(s)),\bz_{2:n'})\nonumber\\
&\times C_{n-1,n'-1}^{\alpha',\tau'}\big((\bt_n^{k_1,k_2},s_1),(\bx_n^{k_1,k_2},z_1),(s_2,\bs_{3:n'}),\bz_{2:n'}\big)D_{n-1,n'-1}^{\beta',\tau'}(\bs_{2:n'}^j(s),\bz_{2:n'})\\
\leq& \frac{c_1c_2^{n-1}(s-s_2)^{-\frac{1}{2}}s_2^{\frac{1}{2}(n'-2)}}{\Gamma(\frac{1}{2})\Gamma(\frac{1}{2}n')}.
\end{align*}
As a consequence, we have
\begin{align*}
LHS\leq &\frac{(4\pi)^{-\frac{1}{2}}c_1c_2^{n-1}(t_n-s)^{-\frac{1}{2}}}{\Gamma(\frac{1}{2})\Gamma(\frac{1}{2}n')}\int_0^s ds_2 (s-s_2)^{-\frac{1}{2}}s_2^{\frac{1}{2}(n'-2)}\leq \frac{c_1c_2^{n}(t_n-s)^{-\frac{1}{2}}s^{\frac{1}{2}(n'-1)}}{\Gamma(\frac{1}{2})\Gamma(\frac{1}{2}(n'-1)+1)},
\end{align*}
if $c_2\geq (4\pi)^{-\frac{1}{2}}\Gamma(\frac{1}{2})$. This completes the proof of this lemma.
\end{proof}

\begin{remark}\label{rmk_gau}
From the proof of Lemma \ref{lmm_gau}, we see that the term $B$ only contributes in $c_1c_2^{n}$. It can be relaxed a little bit, namely,
\begin{align*}
B_{n,n'}^{\beta}(\bs_{n'},\bz_{n'})=\prod_{i=1}^{n'}\Big(\int_{\RR}dz p_{s_i}(z_i-z)f_i(z)\Big)^{1-\beta_i},
\end{align*}
with $\{f_i\}_{i\geq 1}$ being a sequence of nonnegative functions on $\RR$ such that  $\sup_{i\geq 1}\|f_i\|_{\infty}<\infty$.
Then, inequality \eqref{lmmgau1} still holds with constants depending on $\sup_{i\geq 1}\|f_i\|_{\infty}$ instead of $\|X_0\|_{\infty}$.
\end{remark}

As a consequence of Lemma \ref{lmm_gau}, we have the next proposition immediately.
\begin{proposition}\label{lmm_bunif}
Assume   that $X_0\in\cM_F(\RR)$ satisfies  Hypothesis \ref{hyp_x0} and let $X=\{X_t(x):(t,x)\in [0,T]\times \RR\}$ be a solution to equation \eqref{sbm}. Then, 
\begin{align}\label{ine_bunif}
\sup_{(t,x)\in [0,T]\times \RR}\EE [X_t(x)^n]\leq c_1c_2^n(n!)^{\frac{3}{2}},
\end{align}
with constants $c_1,c_2>0$ independent of $n$.
\end{proposition}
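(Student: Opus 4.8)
The plan is to feed the explicit moment formula of Theorem~\ref{coro_mnt} into the integral bound of Lemma~\ref{lmm_gau} and the cardinality count of Lemma~\ref{lmm_ncj}, and then to close with an elementary Stirling-type inequality that produces exactly the exponent $3/2$.

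First I would fix $(t,x)\in[0,T]\times\RR$ and start from \eqref{for_mmt0}. In the generic summand indexed by $(\alpha,\beta,\tau)\in\cJ_{n,n'}$, the prefactor $\prod_{i=1}^n\big(\int_\RR p_t(x-z)X_0(z)\,dz\big)^{1-\alpha_i}$ is at most $(\|X_0\|_\infty\vee1)^n$ since each heat-convolution is $\le\|X_0\|_\infty$; and inside the $\bs_{n'},\bz_{n'}$-integral the factor $\prod_{i=1}^{n'}\sigma(s_i,z_i,\PP_{X_{s_i}(z_i)})^2$ is bounded by $\|\sigma\|_\infty^{2n'}$ (the boundedness of $\sigma^2$ being part of each of Hypotheses~\ref{hyp_0}, \ref{hyp_fn} and \ref{hyp_ifn}). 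After pulling these out, what remains is $\int_{\mathbb{T}_{n'}^t}d\bs_{n'}\int_{\RR^{n'}}d\bz_{n'}\,B_{n,n'}^{\beta}C_{n,n'}^{\alpha,\tau}D_{n,n'}^{\beta,\tau}$ evaluated at coincident outer data $t_1=\cdots=t_n=t$, $x_1=\cdots=x_n=x$; here I would note that Lemma~\ref{lmm_gau} applies verbatim to this case, its proof using only $t_n\le T$ and the ordering of the $s_i$ below $t_n$, never the strict ordering of the outer times. Applying that lemma with $j=n'$ (so the $\bs_{j+1:n'}$-block is empty and the $s^{\frac12(n'-j)}$-factor is $1$) and then integrating the remaining free variable $s_{n'}=s$ over $(0,t)$ gives
\[
\int_{\mathbb{T}_{n'}^t}d\bs_{n'}\int_{\RR^{n'}}d\bz_{n'}\,B_{n,n'}^{\beta}C_{n,n'}^{\alpha,\tau}D_{n,n'}^{\beta,\tau}
\;\le\;\int_0^t\frac{c_1c_2^{n}(t-s)^{\frac{n'}{2}-1}}{\Gamma(\tfrac{n'}{2})}\,ds
\;=\;\frac{c_1c_2^{n}\,t^{n'/2}}{\Gamma(\tfrac{n'}{2}+1)}
\;\le\;\frac{c_1c_2^{n}\,T^{n'/2}}{\Gamma(\tfrac{n'}{2}+1)}.
\]
For $n'=0$ there is a single term, bounded by $(\|X_0\|_\infty\vee1)^n$, and for $n=1$ only that term occurs.

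Next I would invoke Lemma~\ref{lmm_ncj} and rewrite \eqref{for_cjit} as $|\cJ_{n,n'}|=2^{-n'}\binom{n}{n'}\binom{n-1}{n'}(n'!)^2\le 4^{n}2^{-n'}(n'!)^2$. Collecting the bounds above and absorbing $2^{-n'}$, $\|\sigma\|_\infty^{2n'}$, $T^{n'/2}$, $(\|X_0\|_\infty\vee1)^n$ and $4^n$ into updated constants $c_1,c_2$, the $n'$-th block of \eqref{for_mmt0} is at most $c_1c_2^{n}(n'!)^2/\Gamma(\tfrac{n'}{2}+1)$. The arithmetic heart is then the inequality
\[
\frac{(n'!)^2}{\Gamma(\tfrac{n'}{2}+1)}\;\le\;c_1c_2^{n'}(n'!)^{3/2},
\qquad\text{i.e.}\qquad
\sqrt{n'!}\;\le\;c_1c_2^{n'}\,\Gamma\!\big(\tfrac{n'}{2}+1\big),
\]
which follows from $\binom{2m}{m}\le4^m$: for $n'=2m$ this reads $(2m)!\le4^m(m!)^2$, i.e. $\sqrt{(2m)!}\le2^m m!=2^m\Gamma(\tfrac{n'}{2}+1)$, and for $n'=2m+1$ one has $\sqrt{(2m+1)!}=\sqrt{2m+1}\,\sqrt{(2m)!}\le 2^m\cdot 2^m m!\le 4^m\Gamma(m+\tfrac32)=4^m\Gamma(\tfrac{n'}{2}+1)$ for $m\ge1$, the finitely many small cases being absorbed into $c_1$. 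Hence each block is $\le c_1c_2^{n}(n'!)^{3/2}\le c_1c_2^{n}(n!)^{3/2}$, and since there are at most $n\le2^n$ blocks, summing over $n'=0,\dots,n-1$ and readjusting $c_2$ once more yields $\EE[X_t(x)^n]\le c_1c_2^{n}(n!)^{3/2}$. All constants depend only on $T$, $\|X_0\|_\infty$ and $\|\sigma\|_\infty$, so the bound is uniform in $(t,x)$, which is \eqref{ine_bunif}.

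The step I expect to carry the real content — as opposed to bookkeeping — is the exact matching of the exponent $3/2$: the factor $(n'!)^2$ generated by the combinatorics $|\cJ_{n,n'}|$ must be cancelled by the Gamma factor coming out of the time integrations in Lemma~\ref{lmm_gau}, and it is cancelled by precisely ``half a factorial'', which is what forces $3/2$ here (and, downstream, the threshold $\gamma>3/2$ in Hypothesis~\ref{hyp_ifn}). A secondary point I would be careful to justify is the legitimacy of using Lemma~\ref{lmm_gau} with coincident outer times $t_i=t$; apart from that, the proof is a direct assembly of Theorem~\ref{coro_mnt}, Lemma~\ref{lmm_ncj} and Lemma~\ref{lmm_gau}.
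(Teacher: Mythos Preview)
Your proposal is correct and follows essentially the same route as the paper: feed the moment formula of Theorem~\ref{coro_mnt} into Lemma~\ref{lmm_gau} and Lemma~\ref{lmm_ncj}, then close with a Stirling-type estimate. The paper applies Lemma~\ref{lmm_gau} with a full sum over $j=1,\dots,n'$ and then collapses it via the Beta integral, whereas you go straight to $j=n'$; your way is slightly cleaner and avoids a stray factor of $n'$, but the substance is identical, as is your flagged caveat about using Lemma~\ref{lmm_gau} at coincident outer times.
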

\begin{proof}
The case $n=1$ is trivial. Suppose that $n\geq 2$. Consider moment formula \eqref{for_mmt0}. If $n'=0$, then $\cJ_{n,0}=\{(\mathbf{0}_n,\partial,\partial)\}$. This implies that the corresponding summand is
\[
\Big(\int_{\RR}dzp_t(x-z)X_0(z)\Big)^n\leq \|X_0\|_{\infty}^n,
\]
under Hypothesis \ref{hyp_x0}. Additionally, combining this result with Lemmas \ref{lmm_ncj} and \ref{lmm_gau}, we can write
\begin{align*}
\EE [X_t(x)^n] \leq &\|X_0\|_{\infty}^n+ c_1c_2^n\sum_{n'=1}^{n-1}\sum_{j=1}^{n'}\bigg(\frac{n!(n-1)!}{(n-n')!(n-n'-1)!\Gamma(\frac{1}{2}j) \Gamma(\frac{1}{2}(n'-j)+1)}\\
&\hspace{35mm} \times \int_0^t(t-s)^{\frac{1}{2}j-1}s^{\frac{1}{2}(n'-j)}ds\bigg)\\
=&\|X_0\|_{\infty}^n+ c_1c_2^n\sum_{n'=1}^{n-1}\frac{n!(n-1)!}{(n-n')!(n-n'-1)! \Gamma(\frac{1}{2}n'+1)}t^{\frac{1}{2}n'}.
\end{align*}
By Stirling's formula (c.f. Jameson \cite[Theorem 1]{mg-15-jameson}), one can show that, for any $n\in\NN$ and $n'\in \{1,\dots, n-1\}$,
\[
\frac{n!(n-1)!}{(n-n')!(n-n'-1)! \Gamma(\frac{1}{2}n'+1)\frac{1}{2}n'}\leq c_1c_2^n\Gamma\Big(\frac{3}{2}n'+1\Big)\leq c_1c_2^n(n!)^{\frac{3}{2}},
\]
where constants $c_1$ and $c_2$ are independent of $n$ 
and may vary from line to line. Thus, inequality \eqref{ine_bunif} follows immediately.
\end{proof}

\section{Proof of the uniqueness}\label{sec_unq}
In this section, we prove the weak uniqueness for equation \eqref{sbm}, or equivalently for MP \eqref{mart1} and \eqref{mart2}, under certain conditions. In the classical theory of Markov processes, there are several approaches to this question. By the method of duality (c.f. \cite{wiley-86-ethier-kurtz,ap-98-mytnik}), one can obtain the well-posedness of the martingale problem by proving the uniqueness of its Laplace transformation (log-Laplace equation). Besides, the desired uniqueness result can be obtained by studying corresponding historical processes (c.f. \cite{ap-96-overbeck,mams-95-perkins}). In recent years, a new approach was introduced by Xiong \cite{ap-13-xiong} that connects the weak uniqueness for MP \eqref{mart1} and \eqref{mart2} to the strong uniqueness of solutions to a backward doubly SDE. This method was successfully employed for nonlinear Mckean-Vlasov MPs (c.f. \cite{arxiv-21-ji-xiong-yang,ijm-15-mytnik-xiong}).

In this paper, the classic duality, log-Laplace-equation method will be adapted
to prove weak uniqueness. 
However, the log-Laplace equation for \eqref{sbm} depends on $\sigma$ and thus on the distribution of solution(s) to equation \eqref{sbm}. 
Hence, it appears we are not able to show the uniqueness for the log-Laplace equation without knowing that for \eqref{sbm} itself. 
To address this issue, we introduce the following two alternative hypotheses. Under either hypothesis, we can show that as a function of $(t,x)$, $\widehat{\sigma}(t,x)=\sigma(t,x,\PP_{X_{t}(x)})$ is invariant for any solution $X$ to equation \eqref{sbm}. This implies the log-Laplace equation for any solution to \eqref{sbm} is unique. The well-posedness of \eqref{sbm} is thus straightforward.

\subsection{Proof of the uniqueness part of Theorem \ref{thm_exs} under Hypothesis \ref{hyp_fn}}\label{sec_unqfn}
In this subsection, we prove the weak uniqueness for equation \eqref{sbm} under Hypothesis \ref{hyp_fn}. Notice that under Hypothesis \ref{hyp_fn}, $\sigma$ depends only on $t,x$ and the moments of $X_t(x)$ up to order $N$. 
The weak uniqueness for the equation will reduce to the uniqueness for moments of solutions up to order $N$.

Let $X=\{X_t(x):(t,x)\in[0,T]\times \RR\}$ be a solution to \eqref{sbm}, and let $u:[0,T]\times \RR\to \RR^N$ be given by
\[
u_n(t,x)=\EE [X_t(x)^n],
\]
for all $n=1,\dots, N$ and $(t,x)\in [0,T]\times \RR$. Then, by Theorem \ref{coro_mnt}, $u$ is a solution to the following integral equation with initial condition $u(0,x)=(X_0(x),\dots, X_0(x)^N)$,
\begin{align}\label{equ_u}
u_n(t,&x)=\sum_{n'=0}^{n-1}\sum_{(\alpha,\beta,\tau)\in \cJ_{n,n'}}\prod_{i=1}^n\Big(\int_{\RR}dz p_{t}(x-z)X_0(z)\Big)^{1-\alpha_i}\\
&\times \int_{\mathbb{T}_{n'}}d\bs_{n'}\int_{\RR^{n'}}d\bz_{n'}\prod_{i=1}^{n'}\Big(\int_{\RR}dz p_{s_i}(z_i-z)X_0(z)\Big)^{1-\beta_i} \prod_{i=1}^{\alpha}p(t-s_{\tau(i)},x-z_{\tau(i)})\nonumber\\
&\times\prod_{i=|\alpha|+1}^{2n'}p(s_{\iota_{\beta}(i-|\alpha|)}-s_{\tau(i)},z_{\iota_{\beta}(i-|\alpha|)}-z_{\tau(i)})\prod_{i=1}^{n'}f(s_i,z_i,u(s_i,z_i)),\nonumber
\end{align} 
for all $n=1,\dots, N$. In the next proposition, we show the uniqueness of solutions to \eqref{equ_u}.
\begin{proposition}\label{prop_unq}
Suppose that $X_0\in\cM_F(\RR)$ satisfies Hypothesis \ref{hyp_x0}. Then, equation \eqref{equ_u} has a unique 
solution in $C_b([0,T]\times \RR;\RR_+^N)$.
\end{proposition}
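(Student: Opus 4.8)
The plan is to read \eqref{equ_u} as a fixed point equation $u=\Phi(u)$ on $C_b([0,T]\times\RR;\RR_+^N)$, where $\Phi$ sends $u$ to the right hand side of \eqref{equ_u}. \emph{Existence} of a solution is not the real issue here: by the already established existence part of Theorem \ref{thm_exs} there is a weak solution $X$ to \eqref{sbm} under Hypothesis \ref{hyp_fn}, and by Theorem \ref{coro_mnt} together with Proposition \ref{lmm_bunif} the vector $u(t,x)=\big(\EE[X_t(x)],\dots,\EE[X_t(x)^N]\big)$ lies in $C_b([0,T]\times\RR;\RR_+^N)$ and solves \eqref{equ_u}. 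That $\Phi$ maps $C_b([0,T]\times\RR;\RR_+^N)$ into itself is a consequence of the nonnegativity of the heat kernels, $X_0\geq 0$, the positivity and boundedness of $f$, and the kernel estimates of Section \ref{ssec_mmtest} (with continuity in $(t,x)$ following by dominated convergence). So the content of the proposition is the \emph{uniqueness} of the fixed point.

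To prove uniqueness I would take two solutions $u,v\in C_b([0,T]\times\RR;\RR_+^N)$ and set $g(t):=\max_{1\leq n\leq N}\sup_{x\in\RR}|u_n(t,x)-v_n(t,x)|$, which is finite and bounded on $[0,T]$. Subtracting the two copies of \eqref{equ_u}, the summand with $n'=0$ does not involve the solution and cancels, so only $n'\geq 1$ remains, and in each remaining summand the only solution dependent factor is $\prod_{i=1}^{n'}f(s_i,z_i,u(s_i,z_i))$. Using the telescoping identity for a difference of products together with the bound $\|f\|_\infty$ and the Lipschitz continuity of $f$ in its last $N$ arguments (constant $L$, coming from the bounded derivative assumption in Hypothesis \ref{hyp_fn} on the convex set $\RR_+^N$), the difference of these products is bounded by $\|f\|_\infty^{n'-1}L\sum_{i_0=1}^{n'}\|u(s_{i_0},z_{i_0})-v(s_{i_0},z_{i_0})\|_{\RR^N}$, hence by $\sqrt N\,\|f\|_\infty^{n'-1}L\sum_{i_0=1}^{n'}g(s_{i_0})$ since $g(s_{i_0})$ does not depend on $z_{i_0}$.

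For each fixed $i_0\in\{1,\dots,n'\}$ one freezes $s_{i_0}=s$ and applies Lemma \ref{lmm_gau} with $j=i_0$ to the residual kernel integral $\int B_{n,n'}^{\beta}C_{n,n'}^{\alpha,\tau}D_{n,n'}^{\beta,\tau}$ over all the remaining time and space variables, obtaining a bound of order $(t-s)^{\frac12 i_0-1}s^{\frac12(n'-i_0)}$ up to constants. Bounding $A_{n,n'}^{\alpha}\leq\|X_0\|_\infty^{\,n-|\alpha|}$ by Hypothesis \ref{hyp_x0}, counting the triples $(\alpha,\beta,\tau)$ by Lemma \ref{lmm_ncj}, and observing that $n,n',i_0\leq N$ so that all combinatorial and $n$-dependent constants are absorbed into a single constant $C_N$, we arrive at
\[
g(t)\leq C_N\int_0^t\big(1+(t-s)^{-1/2}\big)\,g(s)\,ds,\qquad t\in[0,T],
\]
the worst singularity $(t-s)^{-1/2}$ coming from $i_0=1$, while for $i_0\geq2$ the factor $(t-s)^{\frac12 i_0-1}$ is bounded and the factor $s^{\frac12(n'-i_0)}$ is always bounded on $[0,T]$. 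Since $g$ is bounded and $1+(t-s)^{-1/2}$ is integrable, the generalized (singular) Gronwall inequality --- equivalently, iterating the displayed inequality once so that the resulting kernel becomes bounded --- forces $g\equiv0$, hence $u=v$. One may instead run a Banach fixed point argument for $\Phi$ on $C_b([0,T_0]\times\RR;\RR_+^N)$, where the $(t-s)^{-1/2}$ singularity makes the relevant operator norm $o(1)$ as $T_0\downarrow0$, and then glue intervals up to $T$.

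The only genuinely delicate points are (i) feeding the telescoped difference into Lemma \ref{lmm_gau} in the correct variable slot --- this implicitly uses the structure of the index sets $\cJ_{n,n'}$ and of the bijection $\cM$ of Lemma \ref{lmm_cat}, which is precisely what decouples one time--space integration from the rest --- and (ii) handling the integrable but unbounded kernel $(t-s)^{-1/2}$ at the Gronwall step. Both are routine once Lemma \ref{lmm_gau} is in hand; I expect (ii), making the singular Gronwall step clean, to be the part most worth spelling out carefully.
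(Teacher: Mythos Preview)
Your proposal is correct and follows essentially the same route as the paper: both establish existence from the already-proven existence part of Theorem \ref{thm_exs} together with the moment bounds of Proposition \ref{lmm_bunif}, then for uniqueness subtract the two copies of \eqref{equ_u}, control the difference of the $f$-products (the paper uses the mean value theorem, you use telescoping --- equivalent here), invoke Lemma \ref{lmm_gau} slot by slot to reduce to a scalar inequality with kernel $(t-s)^{-1/2}$, and close with the generalized Gr\"{o}nwall inequality. The only cosmetic difference is that the paper sums $\sum_{n=1}^N\sup_x|u_n-v_n|$ whereas you take the max, and the paper notes $u_1\equiv v_1$ explicitly, neither of which affects the argument.
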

\begin{proof}
The existence in $C_b([0,T]\times \RR; \RR_+^N)$ follows from  Theorem \ref{thm_exs} and Proposition \ref{lmm_bunif}. It suffices to show the uniqueness. Let $v$ be another solution to \eqref{equ_u}. Then, by mean value theorem, for any $n'\in\NN$, and $(\bs_{n'},\bz_n')\in \mathbb{T}^t_{n'}\times \RR^{n'}$,
\begin{align*}
&\prod_{i=1}^{n'}f(s_i,z_i,u(s_i,z_i))-\prod_{i=1}^{n'}f(s_i,z_i,v(s_i,z_i))\\
=&\sum_{k=1}^N\sum_{j=1}^{n'}\prod_{\substack{1\leq i\leq n'\\i\neq j}}f(s_i,z_i,\xi_1^i,\dots,\xi_N^i)\frac{\partial}{\partial \xi_k}f(s_i,z_i,\xi_1^j,\dots, \xi_N^j) [u_k(s_j,z_j)-v_k(s_j,z_j)]\\
\leq& \|f\|_{1,\infty}^{n'}\sum_{k=1}^N\sum_{j=1}^{n'}|u_k(s_j,z_j)-v_k(s_j,z_j)|,
\end{align*}
where $\xi_k^i$ is between $ u_k(s_i,z_i)$ and $ v_k(s_i,z_i)$ for all $1\leq i\leq n'$ and $1\leq k\leq N$ and the last inequality is due to Hypothesis \ref{hyp_fn}. Combining this inequality with equation \eqref{equ_u}, we get
\begin{align*}
|u_n(t,x)&-v_n(t,x)|\leq \sum_{n'=0}^{n-1}\sum_{(\alpha,\beta,\tau)\in \cJ_{n,n'}}\sum_{k=1}^N\sum_{j=1}^{n'}\|f\|_{1,\infty}^{n'}\prod_{i=1}^n\Big(\int_{\RR}dz p_{t}(x-z)X_0(z)\Big)^{1-\alpha_i}\nonumber\\
&\times \int_{\mathbb{T}_{n'}}d\bs_{n'}\int_{\RR^{n'}}d\bz_{n'}\prod_{i=1}^{n'}\Big(\int_{\RR}dz p_{s_i}(z_i-z)X_0(z)\Big)^{1-\beta_i} \prod_{i=1}^{\alpha}p(t-s_{\tau(i)},x-z_{\tau(i)})\nonumber\\
&\times\prod_{i=|\alpha|+1}^{2n'}p(s_{\iota_{\beta}(i-|\alpha|)}-s_{\tau(i)},z_{\iota_{\beta}(i-|\alpha|)}-z_{\tau(i)})|u_k(s_j,z_j)-v_k(s_j,z_j)|.
\end{align*}
By Lemma \ref{lmm_gau}, we deduce that,
\begin{align}\label{propunq1}
\sup_{x\in \RR}|u_n(t,x)-v_n(t,x)|\leq &C \int_0^{t}ds\sum_{n'=1}^{n-1}\sum_{j=1}^{n'}\frac{(t-s)^{\frac{1}{2}j-1}s^{\frac{1}{2}(n'-j)}}{\Gamma(\frac{1}{2}j)\Gamma(\frac{1}{2}(n'-j)+1)}\\
&\times \sum_{k=1}^N\sup_{x\in \RR}|u_k(s,x)-v_k(s,x)|,\nonumber
\end{align}
for some constant $C>0$ depends on $N$ and $\|f\|_{1,\infty}$. On the other hand, it is clear that
\[
u_1(t,x)=v_1(t,x)=\int_{\RR}dz p_t(x-z)X_0(z),\ \forall (t,x)\in [0,T]\times \RR.
\]
Thus taking the summation among $n=1,\dots,N$ on both sides of \eqref{propunq1}, and noticing that 
\[
(t-s)^{\frac{1}{2}j-1}s^{\frac{1}{2}(n'-j)}\leq (T+1)^{\frac{1}{2}(N-1)}(t-s)^{-\frac{1}{2}}
\]
for all $1\leq j\leq n'$, $1\leq n'\leq n-1\leq N-1$ and $0\leq s\leq t\leq T$, and
\[
\sup_{1\leq j\leq n'\leq n-1\leq N}\Big[\Gamma\Big(\frac{1}{2}j\Big)\Gamma\Big(\frac{1}{2}(n'-j)+1\Big)\Big]^{-1}<\infty,
\]
we can write
\begin{align}\label{propunq2}
h(t):=\sum_{n=1}^N\sup_{x\in \RR}|u_n(t,x)-v_n(t,x)|\leq C_{T,N} \int_0^{t}ds(t-s)^{-\frac{1}{2}} h(s).
\end{align}
for some universal constant $C_{T,N}>0$. As a consequence of a generalized Gr\"{o}nwall inequality (c.f. Ye et al. \cite[Theorem 1]{jmaa-07-ye-gao-ding}), we get $h(t)=0$ for all $t\in [0,T]$. This proves the uniqueness of solutions to equation \eqref{equ_u}, and the proof of this proposition is complete.
\end{proof}

\begin{proof}[Proof of Theorem \ref{thm_exs}: uniqueness under Hypothesis \ref{hyp_fn}]
It suffices to show the weak uniqueness. 
Fix  $(t,x)\in[0,T]\times \RR$.  By Proposition \ref{prop_unq}, we know that       $(\EE [X_t(x)],\dots, \EE [X_t(x)^N]$ remains the same for   any solution $X$ to \eqref{sbm}. This allows us to define the following deterministic function $\widehat{\sigma}$ on $[0,T]\times \RR_+$ given by 
\[
\widehat{\sigma}(t,x)=\sqrt{f(\EE [X_t(x)],\dots,\EE [X_t(x)^N])}.
\]
Then, any weak solution $X$ to \eqref{sbm} is also a weak solution to the following SPDE  
\begin{align}\label{sbm3}
\partial_t X_t(x) =\frac{1}{2} \Delta X_t(x) + \widehat{\sigma}(t,x)\sqrt{X_t(x)} \dot{W}(t, x).
\end{align}
Following the standard arguments, one can show that the log-Laplace equation for \eqref{sbm3} is 
\begin{align}\label{equ_lle}
\begin{cases}
\frac{\partial }{\partial t}v_t(x)=\frac{1}{2}\Delta v_t(x)-\frac{1}{2}\widehat{\sigma}(t,x)^2v_t(x)^2,\\
v_0(x)=\phi(x),
\end{cases}
\end{align}
such that
\[
\EE \big[-\exp(\langle X_t,\phi\rangle)\big]=\exp(-\langle X_0,v_t\rangle),
\]
for any nonnegative function $\phi\in \cS(\RR)$. It is known that equation \eqref{equ_lle} has a unique solution (c.f. Dawson \cite[Sections 4.3 and 4.4]{lnm-93-dawson}, and also Engl\"{a}nder and Pinsky \cite{jde-03-englander-pinsky} for a detailed study about this type of equations). This yields that the probability law of $X$ as a measure-valued process is unique. We complete the proof of this theorem.
\end{proof}

\subsection{Proof of the uniqueness part of Theorem \ref{thm_exs} under Hypothesis \ref{hyp_ifn}}\label{ssec_ifn}
Following the idea   in Section \ref{sec_unqfn}, consider the following infinite  dimensional equation   for  $u=\{u_n(t,x):n\in\{1,2,\dots\},(t,x)\in [0,T]\times \RR\}$:
\begin{align}\label{equ_uifn}
u_n(t,x)=&\sum_{n'=1}^{n-1}\sum_{(\alpha,\beta,\tau)\in \cJ_{n,n'}}\prod_{k=1}^n\Big(\int_{\RR}dz p_{t}(x-z)X_0(z)\Big)^{1-\alpha_k}\nonumber\\
&\times \int_{\mathbb{T}_{n'}}d\bs_{n'}\int_{\RR^{n'}}d\bz_{n'}\prod_{i=1}^{n'}\Big(\int_{\RR}dz p_{s_i}(z_i-z)X_0(z)\Big)^{1-\beta_i} \prod_{i=1}^{|\alpha|}p(t-s_{\tau(i)},x-z_{\tau(i)})\nonumber\\
&\times\prod_{i=|\alpha|+1}^{2n'}p(s_{\iota_{\beta}(i-|\alpha|)}-s_{\tau(i)},z_{\iota_{\beta}(i-|\alpha|)}-z_{\tau(i)})\prod_{i=1}^{k'}f(s_i,z_i,u(s_i,z_i)).
\end{align} 
Let $\cH$ be the  Hilbert space of real sequence with inner product defined by  \eqref{def_ch}. As a consequence of Theorem \ref{coro_mnt} and Lemma \ref{lmm_bunif}, equation \eqref{equ_uifn} has a solution $u$ such that for every $(t,x)\in [0,T]\times \RR$, $u(t,x)$ belongs to the Hilbert space $\cH$. Therefore, 
assume Hypothesis \ref{hyp_ifn}, the weak uniqueness is a direct result of the following Proposition \ref{prop_unqif}.

\begin{proposition}\label{prop_unqif}
Suppose that $X_0\in\cM_F(\RR)$ satisfies Hypothesis \ref{hyp_x0}. Then, under Hypothesis \ref{hyp_ifn}, equation \eqref{equ_uifn} has a unique solution in $C_b([0,T]\times \RR;\cH_+)$.
\end{proposition}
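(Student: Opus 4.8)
The plan is to run, inside the Hilbert space $\cH=\cH_\gamma$, the argument in the proof of Proposition \ref{prop_unq}, with the weights $(n!)^{-\gamma}$ playing the role previously played by the truncation order $N$. Existence of a solution in $C_b([0,T]\times\RR;\cH_+)$ is immediate: by Theorem \ref{thm_exs}, Theorem \ref{coro_mnt} and Proposition \ref{lmm_bunif} the sequence $u_n(t,x)=\EE[X_t(x)^n]$ solves \eqref{equ_uifn}, and the bound $\EE[X_t(x)^n]\le c_1c_2^{\,n}(n!)^{3/2}$ together with $\gamma>\tfrac32$ shows $(u_n(t,x))_{n\ge1}\in\cH_+$ uniformly in $(t,x)$. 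The work is the uniqueness.

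So let $u,v\in C_b([0,T]\times\RR;\cH_+)$ both solve \eqref{equ_uifn}, and set $g_n(t)=\sup_{x\in\RR}|u_n(t,x)-v_n(t,x)|$ and $g(t)=\sup_{x\in\RR}\|u(t,x)-v(t,x)\|_{\cH}$, both finite since $u,v$ are $\cH$-bounded. First I would estimate the difference of the two products in \eqref{equ_uifn}: by the telescoping identity for a finite product, and using that under Hypothesis \ref{hyp_ifn} the function $f$ is bounded by some $M$ and is $L$-Lipschitz in its $\cH_+$-argument uniformly in $(s,z)$,
\[
\Big|\prod_{i=1}^{n'}f\big(s_i,z_i,u(s_i,z_i)\big)-\prod_{i=1}^{n'}f\big(s_i,z_i,v(s_i,z_i)\big)\Big|\le LM^{n'-1}\sum_{j=1}^{n'}\|u(s_j,z_j)-v(s_j,z_j)\|_{\cH}.
\]
Inserting this into \eqref{equ_uifn} (the $n'=0$ heat-flow term is common to $u$ and $v$ and cancels in $u_n-v_n$), bounding every $A$-factor by $\|X_0\|_\infty^{n}\vee 1$, and applying Lemma \ref{lmm_gau} — in the relaxed form of Remark \ref{rmk_gau}, valid since $X_0$ is bounded — to integrate out all of $\bs_{n'},\bz_{n'}$ except the $j$-th time variable $s_j=s$, I obtain for every $n\ge2$
\[
g_n(t)\le\int_0^tK_n(t,s)\,g(s)\,ds,\qquad K_n(t,s)=\sum_{n'=1}^{n-1}\sum_{j=1}^{n'}\frac{c_1c_2^{\,n}\,|\cJ_{n,n'}|\,LM^{n'-1}}{\Gamma(\tfrac j2)\Gamma(\tfrac{n'-j}{2}+1)}\,(t-s)^{\frac j2-1}s^{\frac{n'-j}{2}}.
\]

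Next I would square and use the weights, $g(t)^2\le\sum_{n\ge1}(n!)^{-2\gamma}g_n(t)^2$, and for each $n$ split by the Cauchy--Schwarz inequality against the measure $K_n(t,s)\,ds$, $g_n(t)^2\le\big(\int_0^tK_n(t,s)\,ds\big)\big(\int_0^tK_n(t,s)g(s)^2\,ds\big)$. The decisive estimate is that, on $[0,T]$, $K_n(t,s)\le C_1C_2^{\,n}(n!)^{3/2}(t-s)^{-1/2}$: this follows by carrying out the inner $j$-summation first, Beta-function bounds converting $\sum_{j}[\Gamma(\tfrac j2)\Gamma(\tfrac{n'-j}{2}+1)]^{-1}$ into a factor of order $\Gamma(\tfrac{n'}{2}+1)^{-1}$, and then invoking $|\cJ_{n,n'}|=\tfrac{n!(n-1)!}{2^{n'}(n-n')!(n-n'-1)!}$ (Lemma \ref{lmm_ncj}) and the Stirling estimate from the proof of Proposition \ref{lmm_bunif}, which bounds $\tfrac{n!(n-1)!}{(n-n')!(n-n'-1)!\Gamma(\tfrac{n'}{2}+1)}$ by $c_1c_2^{\,n}(n!)^{3/2}$ up to polynomial factors. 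Hence $\int_0^tK_n(t,s)\,ds\le 2\sqrt T\,C_1C_2^{\,n}(n!)^{3/2}$ as well, and Tonelli's theorem (all terms nonnegative) lets me interchange the series and the integral to conclude
\[
g(t)^2\le\bar c\int_0^t(t-s)^{-1/2}g(s)^2\,ds,\qquad\bar c=C\sum_{n\ge1}(\mathrm{const})^{n}(n!)^{3-2\gamma}<\infty,
\]
the series converging exactly because $\gamma>\tfrac32$. Since $g$ is bounded, the generalized Grönwall inequality of Ye et al.\ \cite{jmaa-07-ye-gao-ding} (exponent $\beta=\tfrac12$, zero forcing term) forces $g\equiv0$ on $[0,T]$, i.e.\ $u\equiv v$.

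The main obstacle is precisely this convergence bookkeeping in the infinite sum over $n$. The combinatorial weights $|\cJ_{n,n'}|$ are of size $(n!)^2$ when $n'$ is close to $n$, and shrink to $(n!)^{3/2}$ only after one extracts the factor $\Gamma(\tfrac{n'}{2}+1)^{-1}$ hidden in the $j$-summation of the reciprocal Beta-coefficients (equivalently, in the smoothing produced by the space--time convolution kernel); this $(n!)^{3/2}$ then enters squared through the Cauchy--Schwarz split, so that against the weight $(n!)^{-2\gamma}$ one is left with $(n!)^{3-2\gamma}$, summable precisely when $\gamma>\tfrac32$. If instead one bounds $[\Gamma(\tfrac j2)\Gamma(\tfrac{n'-j}{2}+1)]^{-1}$ crudely by a constant, the gain is lost and the same scheme would require $\gamma>2$; recovering the threshold $\gamma>\tfrac32$ is the point where care is needed.
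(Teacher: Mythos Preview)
Your proposal is correct and follows essentially the same route as the paper: bound the product difference by the Lipschitz constant of $f$, apply Lemma~\ref{lmm_gau} and Lemma~\ref{lmm_ncj}, square via Cauchy--Schwarz (the paper says ``Jensen's inequality,'' which amounts to the same bound here), and absorb the combinatorics into a summable series using the Stirling estimate from Proposition~\ref{lmm_bunif} before invoking the fractional Gr\"onwall inequality. Your explicit identification of the pointwise kernel bound $K_n(t,s)\le C_1C_2^{\,n}(n!)^{3/2}(t-s)^{-1/2}$, obtained by first summing over $j$ via $B(a,b)\ge C^{-(a+b)}$ to extract the factor $\Gamma(\tfrac{n'}{2}+1)^{-1}$, is exactly the mechanism behind the paper's terser display $\frac{c_1c_2^n}{\Gamma[(2\gamma-3)n]}$; both yield a tail of size $(n!)^{3-2\gamma}c^n$ up to Stirling-equivalent factors, summable precisely when $\gamma>\tfrac32$.
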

\begin{proof}
It suffices to show the uniqueness. Suppose $v$ is another solution to \eqref{equ_uifn}. By similar arguments to those  in Proposition \ref{prop_unq}, taking account of Lemmas \ref{lmm_ncj} and \ref{lmm_gau}, Stirling's formula and Jensen's inequality, we can write
\begin{align*}
\|u(t,x)-&v(t,x)\|_{\cH}^2=\sum_{n=1}^{\infty}\frac{1}{(n!)^{2\gamma}}|u_n(t,x)-v_n(t,x)|^2\\
\leq &\sum_{n=1}^{\infty}\frac{c_1c_2^n}{(n!)^{2\gamma}}\bigg[\sum_{n'=1}^{n-1}\sum_{(\alpha,\beta,\tau)\in \cJ_{n,n'}}\int_0^tds \frac{(t-s)^{\frac{1}{2}n'-1}}{\Gamma(\frac{1}{2}n'+1)}\sup_{y\in \RR}\|u(s,y)-v(s,y)\|_{\cH}\bigg]^2\\
\leq &\sum_{n=1}^{\infty}\frac{c_1c_2^n}{(n!)^{2\gamma}}\bigg[\sum_{n'=1}^{n-1}\frac{n!(n-1)!}{2^{n'}(n-n')!(n-n'-1)!}\int_0^t ds \sum_{j=1}^{n'}\frac{(t-s)^{\frac{1}{2}j-1}s^{\frac{1}{2}(n'-j)}}{\Gamma(\frac{1}{2}j)\Gamma(\frac{1}{2}(n'-j)+1)}\\
&\qquad\qquad\quad\times\sup_{y\in \RR}\|u(s,y)-v(s,y)\|_{\cH}\bigg]^2\\
\leq & \sum_{n=1}^{\infty}\frac{c_1c_2^n}{\Gamma[(2\gamma-3)n]}\int_0^tds (t-s)^{-\frac{1}{2}}\sup_{y\in \RR}\|u(s,y)-v(s,y)\|_{\cH}^2.
\end{align*}

Denote by $h(t)=\sup_{x\in \RR}\|u(t,x)-v(t,x)\|_{\cH}^2$. It is clear that the summation in $n$ in the above expression is finite. Thus,
\begin{align*}
h(t)\leq C(\gamma,T)\int_0^t ds (t-s)^{-\frac{1}{2}}h(s),
\end{align*}
with some constant $C(\gamma,T)$ depending on $\gamma$ and $T$. Then, a generalized 
Gr\"{o}nwall inequality implies that $h\equiv 0$ and thus $u\equiv v$. The proof of this proposition is complete.
\end{proof}

\subsection{Examples}
 In Sections \ref{sec_extc}, \ref{sec_unqfn} and \ref{ssec_ifn}, we proved the existence and uniqueness of solutions to the mean-field sBm \eqref{sbm} under certain hypotheses.  It is natural to ask for some real examples for the function $\sigma$, such that the hypotheses we proposed are satisfied. 
For simplicity, we assume $\sigma(t,x,\mu)=\sigma(\mu)$ is only a function of
the probability measure.
A  typical example is that
\begin{align}\label{def_sgmg}
\sigma(\mu)=\int_{\RR}g(x)\mu(dx)
\end{align}
for some function $g$. Thus we provide some examples for function $g$ in \eqref{def_sgmg}, such that Hypothesis \ref{hyp_fn} or \ref{hyp_ifn} is satisfied, which implies the weak existence and uniqueness of solutions to \eqref{sbm} via the approach in this paper.

\begin{example}\label{exp_1}
Let $g$ be a polynomial on $\RR_+$ given by
\[
g(x)=\sum_{k=0}^Na_kx^k
\]
for all $x\in \RR_+$ with some constants $a_0,\dots, a_N\in\RR_+$. Let $h:\RR\times \RR_+$ be a Lipschitz function that is uniformly bounded by two positive constants.  Define function $\sigma:\sP(\RR_+)\to \RR_+$ as follows,
\[
\sigma(\mu)^2=h\Big(\int_{\RR}g(x)\mu(dx)\Big)=h\Big(a_0+\sum_{k=1}^Na_k\EE [X_{\mu}^k]\Big),
\]
for all $\mu\in \sP(\RR_+)$, where $X_{\mu}\sim \mu$. Then, it is clear that Hypothesis \ref{hyp_fn} holds in this example.
\end{example}
\begin{example}\label{exp_2}
Let $g(x)=2+\cos (x^{\frac{1}{2}})$ for all $x\in \RR_+$ and let $\sigma(\mu)^2=\int_{\RR}g(x)\mu(dx)$ for all $\mu\in \sP(\RR_+)$. Then, we see that
\[
g(x)=3+\sum_{n=1}^{\infty}\frac{(-1)^{n}x^n}{(2n)!}
\]
and thus $\sigma:\sP(\RR_+)\to \RR_+$, given by
\[
\sigma(\mu)^2=\int_{\RR}g(x)\mu(dx)=3+\sum_{n=1}^{\infty}\frac{(-1)^{n}}{(2n)!}\EE [X_{\mu}^n]
\]
for all $\mu\in \sP(\RR_+)$ with $X_{\mu}\sim \mu$. Choose $\gamma\in (\frac{3}{2},2)$ (see \eqref{def_ch}). Then, by using Cauchy-Schwarz's inequality, we can show that Hypothesis \ref{hyp_ifn} holds,
\begin{align*}
|\sigma(\mu)^2-\sigma(\nu)^2|=&\Big|\sum_{n=1}^{\infty}\frac{(-1)^{n}}{(2n)!}\big(\EE [X_{\mu}^n] - \EE [X_{\nu}^n]\big)\Big| \\
\leq &\Big(\sum_{n=1}^{\infty}\frac{1}{[(2n)!]^{2-\gamma}}\Big)^{\frac{1}{2}}\Big(\frac{1}{[(2n)!]^{\gamma}}\big(\EE [X_{\mu}^n] - \EE [X_{\nu}^n]\big)^2\Big)^{\frac{1}{2}}\leq C_{\gamma}\|X_{\mu}^{\cH}-X_{\nu}^{\cH}\|_{\cH},
\end{align*}
for any $\mu,\nu\in \sP(\RR_+)$ such that $X_{\mu}^{\cH}=(\EE [X_{\mu}],\EE [X_{\mu}^2],\dots)\in \cH_+$ and $X_{\nu}^{\cH}\in \cH_+$.
\end{example}

\section{Regularity for moments of the solution}\label{sec_mmtreg}
Assume Hypothesis \ref{hyp_fn}. Suppose also that $\sigma(t,x,\mu)=\sigma(\mu)$ depends  only on $\mu$ for simplicity. 
Let $X$ be the solution to \eqref{sbm}. In this section, we will study the regularity for all the moments of $X_t(x)$.

Before the rigorous proof, let us take a look at the following example. Let $n=2$ as in Theorem \ref{coro_mnt}. Then, we can write
\begin{align*}
\EE [X_{t}(x)^2]=&\Big(\int_{\RR}dzp_{t}(x-z)X_0(z)\Big)^2\\
&+\int_0^{t}ds\int_{\RR^2}dydz p_{t-s}(x-z)^2p_s(z-y)\sigma(\PP_{X_{s}(z)})^2 X_0(y)X_0(z).
\end{align*}
Taking the derivative in $t$ on both sides, we will get $\delta_0^2$  (the square of the Dirac delta function)  as substituting $s=t$ of $p_{t-s}(x-z)^2$, which is difficult to handle. 
To avoid this singularity, we perform a change of variable $u=t-s$. 
Then, we need to compute the time derivative of $\sigma(\PP_{X_{t-u}}(z))^2$, which depends on all $\EE [X_{t-u}(z)^n]$, $n=1,\dots, N$. 
In order to write a convincing proof, we introduce the following Picard iteration for moments in Section \ref{ssec_pic}. 
The proof for our main moment regularity result, Theorem \ref{thm_rgl}, 
to follow in Section \ref{ssec_ram}, is based on this Picard iteration.

\subsection{Picard iteration for moments}\label{ssec_pic}
Recall that the diffusion coefficient in equation \eqref{sbm} involves a square root, that is not Lipschitz around $0$. 
Thus, it is difficult to find a sequence convergent to the solution to \eqref{sbm} using Picard iteration. 
Fortunately, the Picard iteration for the moments is convergent (see Proposition \ref{prop_pcm}), which is sufficient to study the regularity of the moments. 

Let $X^{(0)}=\{X_t^{(0)}(x): (t,x)\in [0,T]\times \RR\}$ be the unique (weak) solution to
\begin{align}\label{def_x0}
X_t^{(0)}(x)=\int_{\RR}dz p_t(x-z)X_0(z)+\int_0^t\int_{\RR}p_{t-s}(x-z)\sigma(\PP_{X_0(z)})\sqrt{X_s^{(0)}(z)}W(ds,dz),
\end{align}
and for all $k=1,2,\dots$, let $X^{(k)}=\{X_t^{(k)}(x): (t,x)\in [0,T]\times \RR\}$ be the unique solution to
\begin{align}\label{def_xk}
X_t^{(k)}(x)=\int_{\RR}dzp_t(x-z)X_0(z)+\int_0^t\int_{\RR}p_{t-s}(x-z)\sigma\big(\PP_{X_s^{(k-1)}(z)}\big)\sqrt{X_s^{(k)}(z)}W(ds,dz).
\end{align}
Denote by $u_n^{(k)}(t,x)=\EE [X^{(k)}_t(x)^n]$ for all $n=1,\dots, N$ and $k=0,1,2,\dots$. Then, we have the following results analogue to Theorem \ref{coro_mnt},
\begin{align}\label{def_um0}
u_n^{(0)}(t,x)=&\sum_{n'=0}^{n-1}\sum_{(\alpha,\beta,\tau)\in \cJ_{n,n'}}\Big(\int_{\RR}dz p_{t}(x-z)X_0(z)\Big)^{n-|\alpha|}\nonumber\\
&\times \int_{\mathbb{T}_{n'}^t}d\bs_{n'}\int_{\RR^{n'}}d\bz_{n'}\prod_{i=1}^{n'}\Big(\int_{\RR}dz p_{s_i}(z_i-z)X_0(z)\Big)^{1-\beta_i} \prod_{i=1}^{|\alpha|}p(t-s_{\tau(i)},x-z_{\tau(i)})\nonumber\\
&\times\prod_{i=|\alpha|+1}^{2n'}p(s_{\iota_{\beta}(i-|\alpha|)}-s_{\tau(i)},z_{\iota_{\beta}(i-|\alpha|)}-z_{\tau(i)})\prod_{i=1}^{n'}f(X_0(z_i),\dots, X_0(z_i)^N)
\end{align}
and
\begin{align}\label{def_umk}
u_n^{(k)}(t,&x)=\sum_{n'=0}^{n-1}\sum_{(\alpha,\beta,\tau)\in \cJ_{n,n'}}\Big(\int_{\RR}dz p_{t}(x-z)X_0(z)\Big)^{n-\alpha}\nonumber\\
&\times \int_{\mathbb{T}_{n'}^t}d\bs_{n'}\int_{\RR^{n'}}d\bz_{n'}\prod_{i=1}^{n'}\Big(\int_{\RR}dz p_{s_i}(z_i-z)X_0(z)\Big)^{1-\beta_i} \prod_{i=1}^{|\alpha|}p(t-s_{\tau(i)},x-z_{\tau(i)})\nonumber\\
&\times\prod_{i=|\alpha|+1}^{2n'}p(s_{\iota_{\beta}(i-|\alpha|)}-s_{\tau(i)},z_{\iota_{\beta}(i-|\alpha|)}-z_{\tau(i)})\prod_{i=1}^{n'}f\big(u_1^{(k-1)}(s_i,z_i),\dots, u_N^{(k-1)}(s_i,z_i)\big),
\end{align}
for all $n=1,\dots,N$ and $k=1,2,\dots$. We will show the convergence of $\{u^{(k)}\}_{k\geq 0}$ in the next lemma.
\begin{lemma}\label{lmm_cvuk}
Let $\{u^{(k)}\}_{k\geq 0}$ be given as in \eqref{def_um0} and \eqref{def_umk}. Then, it is a convergent sequence in $C_b([0,T]\times \RR;\RR_+^N)$ equipped with the supremum norm.
\end{lemma}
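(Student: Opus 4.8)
The plan is to prove that $\{u^{(k)}\}_{k\geq 0}$ is a Cauchy sequence in the Banach space $C_b([0,T]\times\RR;\RR_+^N)$, from which convergence follows. First I would establish a uniform-in-$k$ bound: using the explicit formula \eqref{def_umk} together with Lemma \ref{lmm_ncj}, Lemma \ref{lmm_gau} (or rather the estimates feeding into Proposition \ref{lmm_bunif}), and the boundedness of $f$ under Hypothesis \ref{hyp_fn}, one shows that $\sup_{k\geq 0}\sup_{n=1,\dots,N}\|u_n^{(k)}\|_{C_b([0,T]\times\RR)}<\infty$ by an easy induction on $k$; the argument is exactly that of Proposition \ref{lmm_bunif} since $f$ is bounded independently of its last $N$ arguments. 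This guarantees all the quantities below are finite and that we may freely use the Lipschitz-type bound on $f$ restricted to the (bounded) range of the $u^{(k)}$.

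Next I would set up the Cauchy estimate. Define $h_k(t)=\sum_{n=1}^N\sup_{x\in\RR}|u_n^{(k+1)}(t,x)-u_n^{(k)}(t,x)|$. Subtracting \eqref{def_umk} at levels $k+1$ and $k$, the leading ($n'=0$) terms cancel exactly (they do not involve $f$), and for each $n'\geq 1$ the difference of the products $\prod_{i=1}^{n'}f(u^{(k)}(s_i,z_i))-\prod_{i=1}^{n'}f(u^{(k-1)}(s_i,z_i))$ is handled by the telescoping/mean-value argument already used in the proof of Proposition \ref{prop_unq}: it is bounded by $\|f\|_{1,\infty}^{n'}\sum_{j=1}^{n'}\sum_{m=1}^N|u_m^{(k)}(s_j,z_j)-u_m^{(k-1)}(s_j,z_j)|$. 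Plugging this into \eqref{def_umk} and invoking Lemma \ref{lmm_gau} to integrate out the heat kernels and the $B$, $C$, $D$ factors, together with Lemma \ref{lmm_ncj} to count $|\cJ_{n,n'}|$ and the Stirling-type bounds from Proposition \ref{lmm_bunif}, yields exactly the same convolution inequality as \eqref{propunq2}, namely
\begin{align}\label{pf_cvuk}
h_k(t)\leq C_{T,N}\int_0^t (t-s)^{-\frac12}\,h_{k-1}(s)\,ds,
\end{align}
for all $t\in[0,T]$ and all $k\geq 1$, with $C_{T,N}$ independent of $k$.

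From \eqref{pf_cvuk} I would conclude by iterating the inequality. Since $h_0$ is bounded on $[0,T]$ by the uniform bound from the first step, repeated application of the fractional-integral operator $I\colon g\mapsto C_{T,N}\int_0^t(t-s)^{-1/2}g(s)\,ds$ gives $h_k(t)\leq (I^k h_0)(t)$, and a standard computation with the Beta function shows $I^k h_0(t)\leq \|h_0\|_\infty\, (C_{T,N}\Gamma(1/2))^k\, t^{k/2}/\Gamma(k/2+1)\to 0$ uniformly on $[0,T]$ as $k\to\infty$ (this is the same mechanism underlying the generalized Gr\"onwall inequality of Ye et al.\ cited after \eqref{propunq2}). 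Hence $\sum_k \sup_{t\in[0,T]}h_k(t)<\infty$, so $\{u^{(k)}\}$ is Cauchy in $C_b([0,T]\times\RR;\RR_+^N)$; completeness of this space gives a limit, and nonnegativity of the limit is preserved since each $u^{(k)}\in\RR_+^N$. The main obstacle, though it is mostly bookkeeping rather than a conceptual difficulty, is verifying carefully that the difference of the two multi-fold integrals in \eqref{def_umk} reduces to the clean convolution form \eqref{pf_cvuk}: one must check that the $n'=0$ terms genuinely cancel, that the mean-value expansion of the product of $f$'s interacts correctly with the $\mathbb{T}_{n'}^t$-integration, and that the bounds of Lemma \ref{lmm_gau} apply uniformly over $(\alpha,\beta,\tau)\in\cJ_{n,n'}$ and $1\le j\le n'$ with constants that, after summation over $n'$ and $n\le N$, remain finite — exactly the estimates assembled in the proof of Proposition \ref{prop_unq}.
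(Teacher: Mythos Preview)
Your proposal is correct and follows essentially the same approach as the paper: you derive the convolution inequality $h_k(t)\le C_{T,N}\int_0^t(t-s)^{-1/2}h_{k-1}(s)\,ds$ via the mean-value telescoping of Proposition~\ref{prop_unq} combined with Lemma~\ref{lmm_gau}, then iterate to obtain $h_k(t)\le C^k t^{k/2}/\Gamma(k/2+1)$ and conclude the Cauchy property from summability. The only minor differences are cosmetic --- the paper bounds $h^{(0)}$ directly rather than establishing the uniform-in-$k$ bound as a separate preliminary step (your extra step is harmless since $f$ is globally Lipschitz under Hypothesis~\ref{hyp_fn}), and it cites Hu et al.\ and the Mittag-Leffler bound where you invoke the Beta-function computation.
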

\begin{proof}
The proof of this lemma is similar to Proposition \ref{prop_unq}. In fact, by using Theorem \ref{coro_mnt} and Lemma \ref{lmm_gau}, and the fact that for all $k=0,1,\dots$, and $(t,x)\in [0,T]\times \RR$,
\[
u_1^{(k)}(t,x)=\int_{\RR}dzp_t(x-z)X_0(z),
\] 
we can deduce the next inequality analogously to \eqref{propunq2},
\begin{align*}
h^{(k)}(t):=&\sum_{n=1}^N\sup_{x\in \RR}|u_n^{(k+1)}(t,x)-u_n^{(k)}(t,x)|\\
\leq & C\int_0^{t}ds(t-s)^{-\frac{1}{2}} h^{(k-1)}(s).
\end{align*} 
By iteration, we have
\begin{align}\label{ine_dk}
h^{(k)}(t)\leq C\int_{\mathbb{T}_{k}^t}d\bs_k(t-s_1)^{-\frac{1}{2}}(s_1-s_2)^{-\frac{1}{2}}\cdots (s_{k-1}-s_{k})^{-\frac{1}{2}}h^{(0)}(s_{k}).
\end{align}
Similar argument as in Lemma \ref{lmm_bunif} implies that
\begin{align*}
\sup_{t\in [0,T]}h^{(0)}(t)=&\sup_{t\in [0,T]}\sum_{n=1}^N\sup_{x\in \RR}\big|\EE [X_t^{(1)}(x)^n]-\EE [X^{(0)}_t(x)^n]\big| 
\end{align*}
is finite.  
Now, it follows from Hu et al. \cite[Lemma 4.5]{ejp-15-hu-huang-nualart-tindel} that
\begin{align}\label{ine_pchk}
h^{(k)}(t)\leq C\int_{\mathbb{T}_{k}^t}d\bs_k(t-s_1)^{-\frac{1}{2}}(s_1-s_2)^{-\frac{1}{2}}\cdots (s_{k-1}-s_{k})^{-\frac{1}{2}}\leq \frac{C^{k}t^{\frac{1}{2}k}}{\Gamma(\frac{1}{2}k+1)}.
\end{align}
Finally, by the asymptotic bound of the Mittag-Leffler function (c.f. Kilbas et al. \cite[Formula (1.8.10)]{elsevier-06-kilbas-strivastava-trujillo}), there exist positive constants $c_1$ and $c_2$ such that
\[
\sum_{k=0}^{\infty}h^{(k)}(t)\leq c_1e^{c_2C_T^2},
\]
for all $t\in [0,T]$. As a consequence, $\{u^{(k)}\}_{k\geq 0}$ is a Cauchy sequence in $C_b([0,T]\times \RR;\RR^N)$ under the supremum norm. The proof of this lemma is complete.
\end{proof}
\begin{proposition}\label{prop_pcm}
Assume Hypothesis  \ref{hyp_fn} with $\sigma(t,x,\mu)=\sigma(\mu)$ depending only on $\mu$. Let $X$ be the solution to \eqref{sbm} with initial constitution $X_0\in \cM_F(\RR)$ satisfying Hypothesis \ref{hyp_x0}. Let $X^{(k)}$ be given by  \eqref{def_x0} and \eqref{def_xk}. Then, for any $n=1,2,\dots$,
\begin{align}\label{prop_pclm}
\lim_{k\to \infty} \EE\big[(X_t^{(k)}(x))^n\big]= \EE [X_t(x)^n].
\end{align}
uniformly in $(t,x)\in [0,T]\times \RR$.
\end{proposition}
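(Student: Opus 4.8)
The plan is to compare the moment functions $u_n^{(k)}(t,x)=\EE[(X_t^{(k)}(x))^n]$ of the Picard iterates \emph{directly} against the moment functions $u_n(t,x)=\EE[X_t(x)^n]$ of the true solution $X$, rather than against each other as in Lemma \ref{lmm_cvuk}. First I would note that $u^{(k)}=(u_1^{(k)},\dots,u_N^{(k)})$ solves the integral equation \eqref{def_umk}, while $u=(u_1,\dots,u_N)$ solves \eqref{equ_u} by Theorem \ref{coro_mnt}; the two equations have exactly the same kernel structure and differ only in the ``$E$''-factor, namely $\prod_{i=1}^{n'}f\big(u^{(k-1)}(s_i,z_i)\big)$ versus $\prod_{i=1}^{n'}f\big(u(s_i,z_i)\big)$. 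Since $f$ is bounded with bounded first derivatives (Hypothesis \ref{hyp_fn}), the telescoping/mean-value estimate used in the proof of Proposition \ref{prop_unq} applies verbatim and bounds the pointwise difference of these products by $\|f\|_{1,\infty}^{n'}\sum_{m=1}^N\sum_{j=1}^{n'}|u_m^{(k-1)}(s_j,z_j)-u_m(s_j,z_j)|$.

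Second, I would feed this into $\eqref{def_umk}-\eqref{equ_u}$ and invoke Lemma \ref{lmm_gau} (together with Lemma \ref{lmm_ncj} to count the triples in $\cJ_{n,n'}$), exactly as in the derivation of \eqref{propunq2}, to obtain with $g^{(k)}(t):=\sum_{n=1}^N\sup_{x\in\RR}|u_n^{(k)}(t,x)-u_n(t,x)|$, and using that $u_1^{(k)}(t,x)=u_1(t,x)=\int_{\RR}p_t(x-z)X_0(z)\,dz$ for every $k$, a recursive inequality
\[
g^{(k)}(t)\le C_{T,N}\int_0^t (t-s)^{-\frac12}\,g^{(k-1)}(s)\,ds .
\]
Iterating this $k$ times and bounding the resulting iterated singular integral as in \eqref{ine_pchk} gives $g^{(k)}(t)\le \big(C^{k}t^{k/2}/\Gamma(\tfrac{k}{2}+1)\big)\sup_{[0,T]}g^{(0)}$. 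Here $\sup_{[0,T]}g^{(0)}$ is finite because both $\sup_{x}\EE[(X_t^{(0)}(x))^n]$ and $\sup_{x}\EE[X_t(x)^n]$ are bounded on $[0,T]\times\RR$: the former by applying the moment-duality estimate behind Lemma \ref{lmm_ubn} and Proposition \ref{lmm_bunif} to equation \eqref{def_x0}, whose coefficient $\sigma(\PP_{X_0(\cdot)})$ is still positive and bounded, and the latter directly by Proposition \ref{lmm_bunif}. Summing the Mittag--Leffler-type bound over $k$, as at the end of the proof of Lemma \ref{lmm_cvuk}, shows $g^{(k)}(t)\to 0$ uniformly on $[0,T]$, which is precisely \eqref{prop_pclm} for $n=1,\dots,N$.

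Finally, for $n>N$ I would bootstrap: once $u_m^{(k-1)}\to u_m$ uniformly for $1\le m\le N$, continuity and boundedness of $f$ give $\prod_{i=1}^{n'}f\big(u^{(k-1)}(s_i,z_i)\big)\to\prod_{i=1}^{n'}f\big(u(s_i,z_i)\big)$ boundedly and pointwise, so dominated convergence in the representation \eqref{def_umk} for general $n$ — whose remaining factors are the integrable kernels controlled, uniformly in $k$, by Lemma \ref{lmm_gau} — lets me pass to the limit and identify $\lim_{k\to\infty}u_n^{(k)}(t,x)$ with the right-hand side of the moment formula \eqref{for_mmt0}, i.e. with $\EE[X_t(x)^n]$, again uniformly in $(t,x)$. (Equivalently, one may first use Lemma \ref{lmm_cvuk} to get a limit $u^{(\infty)}$, pass to the limit in \eqref{def_umk} to see that $u^{(\infty)}$ solves \eqref{equ_u}, and then invoke the uniqueness in Proposition \ref{prop_unq} to get $u^{(\infty)}=u$; the direct Gr\"onwall route above avoids even this.) I expect the only point requiring genuine care to be the verification that $\sup_{[0,T]}g^{(0)}<\infty$, that is, that the frozen-coefficient equation \eqref{def_x0} enjoys the same uniform moment bounds as \eqref{sbm}; everything else is a transcription of the Gr\"onwall/Mittag--Leffler scheme already used for Proposition \ref{prop_unq} and Lemma \ref{lmm_cvuk}.
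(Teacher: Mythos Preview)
Your proposal is correct. The primary route you take differs from the paper's, though you already name the paper's argument as your parenthetical alternative: the paper first invokes Lemma~\ref{lmm_cvuk} to obtain a limit $u^{(\infty)}$ of $u^{(k)}$ in $C_b([0,T]\times\RR;\RR_+^N)$, passes to the limit in \eqref{def_umk} to see that $u^{(\infty)}$ solves \eqref{equ_u}, and then appeals to the uniqueness of Proposition~\ref{prop_unq} to identify $u^{(\infty)}$ with the true moment vector $u$. Your direct Gr\"onwall comparison of $g^{(k)}=\sum_n\sup_x|u_n^{(k)}-u_n|$ bypasses both the Cauchy step and the uniqueness step, at the small cost of having to check $\sup_{[0,T]}g^{(0)}<\infty$ separately; the paper's route reuses Lemma~\ref{lmm_cvuk} and Proposition~\ref{prop_unq} verbatim and so is a few lines shorter. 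For $n>N$ the paper does not use dominated convergence but again the quantitative estimate via Lemma~\ref{lmm_gau}, obtaining
\[
\sup_{x\in\RR}\big|\EE\big[(X_t^{(k)}(x))^n\big]-\EE[X_t(x)^n]\big|\le C_{n,T}\int_0^t(t-s)^{-\frac12}\,ds\,\|u-u^{(k)}\|_\infty=2C_{n,T}t^{\frac12}\|u-u^{(k)}\|_\infty,
\]
which gives the uniform convergence directly; your dominated-convergence argument also works but is less explicit about uniformity in $(t,x)$.
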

\begin{proof}
Let $u^{(k)}$ be defined as in \eqref{def_um0} and \eqref{def_umk}
and $u$ be the limit of $u^{(k)}$ in $C_b([0,T]\times \RR;\RR_+^N)$ as $k\to \infty$. 
Then, by a common argument, we conclude that $u$ is the solution to \eqref{equ_u}. As a result, \eqref{prop_pclm} is true for $n\in \{1,\dots, N\}$. On the other hand, suppose $n>N$. Using moment formula \eqref{for_mmt0} and Lemma \ref{lmm_gau}, we deduce that
\begin{align*}
\sup_{x\in \RR}\big|\EE\big[(X_t^{(k)}(x))^n\big]-\EE [X_t(x)^n]\big|\leq &C_{n,T}\int_0^tds(t-s)^{-\frac{1}{2}}\|u-u^{(k)}\|_{\infty}\\
=&2C_{n,T}t^{\frac{1}{2}}\|u-u^{(k)}\|_{\infty},
\end{align*}
where $C_{n,T}$ is a constant depending on $n$ and $T$. This proves \eqref{prop_pclm} for all $n>N$. The proof of this proposition is complete.
\end{proof}

\subsection{Regularity analysis for moments}\label{ssec_ram}
In this section, we will prove that the solution $u$ to \eqref{equ_u} is differentiable in both time and spatial arguments with uniformly bounded derivatives.
\begin{lemma}\label{lmm_duk}
Let $X_0\in H_{2,2}(\RR)\cap C_b^2(\RR)$. Assume Hypothesis \ref{hyp_fn} and assume that $\sigma(t,x,\mu)=\sigma(\mu)$ depends only on $\mu$. For any $k=0,1,\dots$, let $u^{(k)}=\{u^{(k)}_n(t,x):n=1,\dots, N, (t,x)\in [0,T]\times \RR\}$ be defined iteratively as in \eqref{def_um0} and \eqref{def_umk}. Then, $u^{(k)}$ is differentiable in time for all $(t,x)\in (0,T]\times \RR$ and in space for all $(t,x)\in [0,T]\times \RR$.
\end{lemma}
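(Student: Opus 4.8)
The plan is to proceed by induction on $k$, showing at each stage that the recursive formula for $u^{(k)}$ produces a function that is $C^1$ in time on $(0,T]\times\RR$ and $C^1$ (indeed better) in space on $[0,T]\times\RR$, and that the relevant derivatives are uniformly bounded so that the $t=0$ boundary behaviour is controlled and the induction can be fed back in. The base case $k=0$: here $u^{(0)}_n$ is given by \eqref{def_um0}, in which the only ``variable coefficient'' inside the $\bz_{n'}$-integrals is the fixed, smooth function $z\mapsto f(X_0(z),\dots,X_0(z)^N)$, which under the hypothesis $X_0\in H_{2,2}(\RR)\cap C_b^2(\RR)$ and Hypothesis \ref{hyp_fn} (differentiability of $f$) is bounded with bounded first and second derivatives. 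So $u^{(0)}$ is an explicit finite sum of convolutions of heat kernels against $X_0$, $X_0^{1-\beta_i}$-type factors, and these smooth weights; differentiability in $x$ follows by moving $\nabla_x$ onto $X_0$ (integration by parts, exactly as in the estimate \eqref{sicr11}) so no kernel singularity is created, and differentiability in $t$ for $t>0$ follows from the smoothing of the heat semigroup together with the change of variables $u_i = t - s_i$ to remove the $p(t-s_{\tau(i)},\cdot)$ singularity at $s_i=t$, as flagged in the discussion preceding Section \ref{ssec_pic}.

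For the inductive step, I would assume $u^{(k-1)}\in C_b^1$ in the stated sense with uniformly bounded derivatives and look at \eqref{def_umk}. The structural point is that \eqref{def_umk} has exactly the same shape as \eqref{def_um0} except that the fixed weight $f(X_0(z_i),\dots)$ is replaced by $f(u_1^{(k-1)}(s_i,z_i),\dots,u_N^{(k-1)}(s_i,z_i))$, which now depends on $(s_i,z_i)$ through $u^{(k-1)}$. By the chain rule and Hypothesis \ref{hyp_fn} (bounded derivatives of $f$), this composite weight is differentiable in $s_i$ and $z_i$ with derivatives bounded by $\|f\|_{1,\infty}$ times the (uniformly bounded, by the inductive hypothesis) derivatives of $u^{(k-1)}$. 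Differentiation in $x$: as before, the $x$-dependence sits only in the factors $\big(\int p_t(x-z)X_0(z)dz\big)^{\dots}$ and in $\prod_{i=1}^{|\alpha|}p(t-s_{\tau(i)},x-z_{\tau(i)})$; the first is handled by integration by parts onto $X_0$ (using $X_0\in H_{2,2}$), the second by the usual heat-kernel gradient bound $\|\nabla p_u\|_{L^1_x}=Cu^{-1/2}$, and the resulting integrand is controlled by Lemma \ref{lmm_gau} with one extra $u^{-1/2}$, still integrable. Differentiation in $t$ on $(0,T]\times\RR$: split $\partial_t$ into the contribution hitting the prefactor $\big(\int p_t X_0\big)$ (smooth for $t>0$), the contribution hitting each $p(t-s_{\tau(i)},x-z_{\tau(i)})$, and — after the substitution $s_i \mapsto t - s_i$ on the outermost time variable — the contribution hitting the weight through its dependence on $t - s_i$; the key is that after this change of variables no factor of the form $p(t-s,\cdot)^2$ or $p(t-s,\cdot)$ with $s\uparrow t$ survives in a non-integrable way, and the $s$-integrals that remain are of the $\int_0^t (t-s)^{-1/2}(\cdots)ds$ type, finite by Lemma \ref{lmm_gau}. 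Each of these pieces can be differentiated under the integral sign, justified by dominated convergence using the uniform moment bounds (Proposition \ref{lmm_bunif}) and the inductive uniform bounds on $\nabla u^{(k-1)}$, $\partial_t u^{(k-1)}$.

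Finally I would record the uniform-in-$k$ bounds: repeating the estimates above but keeping track of constants, one gets $\sup_{t,x}\big(|\nabla_x u^{(k)}_n| + |\partial_t u^{(k)}_n|\big) \le C$ with $C$ independent of $k$, because the only $k$-dependence enters through $\|f\|_{1,\infty}$ and the previous-stage derivative bounds, and the Gr\"onwall-type iteration in $k$ (as in \eqref{ine_pchk}) closes. This makes the statement of Lemma \ref{lmm_duk} itself ($u^{(k)}$ differentiable for each fixed $k$) immediate, and simultaneously sets up the passage $k\to\infty$ that will be used in Theorem \ref{thm_rgl}. \textbf{The main obstacle} is the time differentiability near $t=0$: one has to be careful that after the change of variables $u_i = t - s_i$ the $t$-derivative falling on the innermost weight $f(u^{(k-1)}(t - u_i, z_i),\dots)$ does not interact badly with the $(t-s)^{-1/2}$-type singularities already present, i.e. that all the iterated time integrals remain of Beta-function type with exponents $>-1$; this is exactly where Lemma \ref{lmm_gau} (and its Gamma-function bookkeeping) is doing the real work, and where one must resist the temptation to differentiate a factor that is genuinely only H\"older-$\tfrac12$ in $t$.
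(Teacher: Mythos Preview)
Your overall architecture matches the paper's: induction on $k$, the change of variables $r_i=t-s_i$ to move the $t$-dependence off the factors $p(t-s_{\tau(i)},\cdot)$ and onto the weights, and the three-way split of $\partial_t I^{n,k,2}_{\alpha,\beta,\tau}$ into a boundary term at $r_{n'}=t$, a term where $\partial_t$ hits the $X_0$-convolutions, and a term where $\partial_t$ hits $f(u^{(k-1)}(t-r_i,\cdot))$. The spatial argument is also in line with what the paper indicates.

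The gap is in your quantitative inductive hypothesis. The assumption ``$u^{(k-1)}\in C_b^1$ with uniformly bounded derivatives'' is false for the time derivative already at $k=1$. In the $n'=1$ term of \eqref{def_um0}, after the change of variables the boundary contribution at $r_1=t$ is
\[
\int_{\RR}p_t(x-z_1)^2\,X_0(z_1)\,f\big(X_0(z_1),\dots\big)\,dz_1\ \sim\ (4\pi t)^{-1/2},
\]
so $\partial_t u^{(0)}_n(t,x)$ genuinely blows up like $t^{-1/2}$ as $t\downarrow 0$; your claimed bound $\sup_{t,x}|\partial_t u^{(k)}_n|\le C$ cannot hold, and the induction as you phrase it does not start. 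The paper repairs this by proving instead the graded estimate
\[
\sum_{n=1}^N\sup_{x\in\RR}\Big|\frac{\partial}{\partial t}u^{(k)}_n(t,x)\Big|\ \le\ \sum_{i=0}^{k}\frac{C_0^{i+1}}{\Gamma\big(\tfrac12(i+1)\big)}\,t^{\frac12(i-1)},
\]
which is exactly the shape that survives the recursion: feeding the inductive bound $s^{(i-1)/2}$ into the $J_3'$-type integral $\int_0^t(t-s)^{-1/2}(\cdot)\,ds$ produces $t^{i/2}$ with the correct Gamma factor, shifting $i\mapsto i+1$. This is the ``Gamma-function bookkeeping'' you correctly flag as the main obstacle, but it must \emph{replace} the uniform-bound hypothesis, not sit alongside it. For Lemma~\ref{lmm_duk} as stated (differentiability for each fixed $k$) this singular bound suffices; the uniformity in $k$ you are reaching for is the content of the \emph{next} lemma in the paper and needs a separate Cauchy-in-$k$ argument for $\partial_t u^{(k+1)}-\partial_t u^{(k)}$, not a direct bound on $\partial_t u^{(k)}$.
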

\begin{proof}
We only prove the differentiability in time by verifying the next inequality by induction. 
\begin{align}\label{lmm_duk1}
\sum_{n=1}^N\sup_{x\in \RR}\Big|\frac{\partial}{\partial t}u^{(k)}_n(t,x)\Big|\leq \sum_{i=0}^{k}\frac{C_0^{i+1}}{\Gamma(\frac{1}{2}(i+1))}t^{\frac{1}{2}(i-1)},
\end{align}
for some universal constant $C_0>0$ depends  on $N$, $\|X_0\|_{2,\infty}$, $\|f\|_{1,\infty}$ and $T$. The proof of spatial differentiability can be done in a similar way.

{\bf Step 1.} Assume $k=0$. Recalling moment formula \eqref{def_um0}, in order to estimate the derivative of $u_n^{(0)}$, it suffices to estimate that for every summand in \eqref{def_um0}. Choose $(\alpha,\beta,\tau)\in \cJ_{n,n'}$. Then, we have
\begin{align}\label{for_iabg}
I_{\alpha,\beta,\tau}^{n,0}:=&\prod_{i=1}^{n}\Big(\int_{\RR}dz p_{t}(x-z)X_0(z)\Big)^{1-\alpha_i} \int_{\mathbb{T}_{n'}}d\bs_{n'}\int_{\RR^{n'}}d\bz_{n'}\prod_{i=1}^{n'}\Big(\int_{\RR}dz p_{s_i}(z_i-z)X_0(z)\Big)^{1-\beta_i} \nonumber\\
&\times\prod_{i=1}^{|\alpha|}p(t-s_{\tau(i)},x-z_{\tau(i)})\prod_{i=|\alpha|+1}^{2n'}p(s_{\iota_{\beta}(i-|\alpha|)}-s_{\tau(i)},z_{\iota_{\beta}(i-|\alpha|)}-z_{\tau(i)})\nonumber\\
&\times \prod_{i=1}^{n'}f(X_0(z_i),\dots, X_0(z_i)^N)\nonumber\\
=&I_{\alpha,\beta,\tau}^{n,0,1}\times I_{\alpha,\beta,\tau}^{n,0,2},
\end{align}
where
\begin{align*}
I_{\alpha,\beta,\tau}^{n,0,1}=&\prod_{i=1}^{n}\Big(\int_{\RR}dz p_{t}(x-z)X_0(z)\Big)^{1-\alpha_i} 
\end{align*}
and performing a change of variables $r_i=t-s_i$ for all $i=1,\dots,n'$,
\begin{align*}
I_{\alpha,\beta,\tau}^{n,0,2}=& \int_{\widehat{\mathbb{T}}^{t}_{n'}}d\br_{n'}\int_{\RR^{n'}}d\bz_{n'}\prod_{i=1}^{n'}\Big(\int_{\RR}dz p_{t-r_i}(z_i-z)X_0(z)\Big)^{1-\beta_i}\prod_{i=1}^{|\alpha|}p(r_{\tau(i)},x-z_{\tau(i)}) \nonumber\\
&\times\prod_{i=|\alpha|+1}^{2n'}p(r_{\tau(i)}-r_{\iota_{\beta}(i-|\alpha|)},z_{\tau(i)}-z_{\iota_{\beta}(i-|\alpha|)})\prod_{i=1}^{n'}f(X_0(z_i),\dots, X_0(z_i)^N),
\end{align*}
with $\widehat{\mathbb{T}}^{t}_{n'}=\{\br_{n'}=(r_1,\dots, r_{n'}): 0<r_1<\dots <r_{n'}<t\}$. Firstly, it is clear that
\begin{align}\label{for_piabg1}
\Big|\frac{\partial}{\partial t}I_{\alpha,\beta,\tau}^{n,0,1}\Big|=&(n-|\alpha|)\Big(\int_{\RR}dz p_{t}(x-z)X_0(z)\Big)^{n-|\alpha|-1}\int_{\RR}dz p_{t}(x-z)\Delta X_0(z)\nonumber\\
\leq &(n-|\alpha|)\|X_0\|_{2,\infty}^{n-|\alpha|}.
\end{align}
Additionally, by Lemma \ref{lmm_gau}, we have
\begin{align}
\big|I_{\alpha,\beta,\tau}^{n,0,2}\big|\leq c_1c_2^n\sum_{j=1}^{n'}\int_0^tds\frac{(t-s)^{\frac{1}{2}j-1}s^{\frac{1}{2}(n'-j)}}{\Gamma(\frac{1}{2}j)\Gamma(\frac{1}{2}(n'-j)+1)}\leq \frac{c_1c_2^n t^{\frac{1}{2}n'}}{\Gamma(\frac{1}{2}n'+2)}\,.
\end{align}
In the next step, we need to compute $\frac{\partial}{\partial t}I^{n,0,2}_{\alpha,\beta,\tau}$. If $n'=0$, we have $I_{\alpha,\beta,\tau}^{n,0,2}=1$, and thus $\frac{\partial}{\partial t} I_{\alpha,\beta,\tau}^{n,0,2}=0$.
On the other hand, suppose that $n'\geq 1$. Then, we have
\begin{align}\label{for_piabg2}
\frac{\partial}{\partial t}I^{n,0,2}_{\alpha,\beta,\tau}=J_1+J_2,
\end{align}
where
\begin{align*}
J_1=&\int_{\widehat{\mathbb{T}}^{t}_{n'-1}}d\br_{n'-1}\int_{\RR^{n'}}d\bz_{n'}\prod_{i=1}^{n'}\Big(\int_{\RR}dz p_{t-r_i}(z_i-z)X_0(z)\Big)^{1-\beta_i}\prod_{i=1}^{|\alpha|}p(r_{\tau(i)},x-z_{\tau(i)}) \nonumber\\
&\times\prod_{i=|\alpha|+1}^{2n'}p(r_{\tau(i)}-r_{\iota_{\beta}(i-|\alpha|)},z_{\tau(i)}-z_{\iota_{\beta}(i-|\alpha|)})\prod_{i=1}^{n'}f(X_0(z_i),\dots, X_0(z_i)^N)\Big|_{r_{n'}=t},
\end{align*}
and
\begin{align*}
J_2=&\int_{\widehat{\mathbb{T}}^{t}_{n'}}d\br_{n'}\int_{\RR^{n'}}d\bz_{n'}\frac{\partial}{\partial t}\bigg[\prod_{i=1}^{n'}\Big(\int_{\RR}dz p_{t-r_i}(z_i-z)X_0(z)\Big)^{1-\beta_i}\bigg]\prod_{i=1}^{|\alpha|}p(r_{\tau(i)},x-z_{\tau(i)}) \nonumber\\
&\times\prod_{i=|\alpha|+1}^{2n'}p(r_{\tau(i)}-r_{\iota_{\beta}(i-|\alpha|)},z_{\tau(i)}-z_{\iota_{\beta}(i-|\alpha|)})\prod_{i=1}^{n'}f(X_0(z_i),\dots, X_0(z_i)^N).
\end{align*}
Using Lemma \ref{lmm_gau} and observing that $\|f\|_{\infty}<\infty$, we find
for $n'\leq n-1$ that
\begin{align}
|J_1|\leq \int_0^tds \frac{c_1c_2^n(t-s)^{\frac{1}{2}n'-1}}{\Gamma(\frac{1}{2}n')}\leq \frac{c_1c_2^n t^{\frac{1}{2}n'}}{\Gamma(\frac{1}{2}n'+1)}.
\end{align}
In the next step, we write the derivative explicitly as follows
\begin{align*}
J_2=&\int_{\widehat{\mathbb{T}}^{t}_{n'}}d\br_{n'}\int_{\RR^{n'}}d\bz_{n'}\bigg[\sum_{j=1}^{n'}\prod_{\substack{1\leq i\leq n'\\i\neq j}}\Big(\int_{\RR}dz p_{t-r_i}(z_i-z)X_0(z)\Big)^{1-\beta_i}\\
&\times \int_{\RR}dz p_{t-r_j}(z_j-z)\Delta X_0(z)\1_{\beta_j=0}\bigg]\prod_{i=1}^{|\alpha|}p(r_{\tau(i)},x-z_{\tau(i)}) \nonumber\\
&\times\prod_{i=|\alpha|+1}^{2n'}p(r_{\tau(i)}-r_{\iota_{\beta}(i-|\alpha|)},z_{\tau(i)}-z_{\iota_{\beta}(i-|\alpha|)})\prod_{i=1}^{n'}f(X_0(z_i),\dots, X_0(z_i)^N)\,.
\end{align*}
Then, it follows from Remark \ref{rmk_gau} that
\begin{align}\label{for_piabg22}
|J_2|\leq c_1c_2^n\int_0^tds\frac{(t-s)^{\frac{1}{2}n'-1}}{\Gamma(\frac{1}{2}n'+1)}\leq \frac{c_1c_2^n t^{\frac{1}{2}n'}}{\Gamma(\frac{1}{2}n'+2)}.
\end{align}
Combining inequalities \eqref{for_iabg}-\eqref{for_piabg22}, we get that
\begin{align*}
\Big|\frac{\partial}{\partial t}I_{\alpha,\beta,\tau}^{n,0}\Big|=\Big|I_{\alpha,\beta,\tau}^{n,0,1}\frac{\partial}{\partial t}I_{\alpha,\beta,\tau}^{n,0,2}+I_{\alpha,\beta,\tau}^{n,0,2}\frac{\partial}{\partial t}I_{\alpha,\beta,\tau}^{n,0,1}\Big|\leq \frac{c_1c_2t^{\frac{1}{2}n'-1}}{\Gamma(\frac{1}{2}n'+1)}\,. 
\end{align*}
Taking account of Lemma \ref{lmm_ncj}, it follows that
\begin{align*}
\Big|\frac{\partial}{\partial t}u^{(0)}_n(t,x)\Big|=\Big|\sum_{n'=0}^{n-1}\sum_{(\alpha,\beta,\tau)\in \cJ_{n,n'}}\frac{\partial}{\partial t}I_{\alpha,\beta,\tau}^{n,0}\Big|\leq C_{N,T}t^{-\frac{1}{2}}.
\end{align*}

{\bf Step 2.} Let $k\geq 1$. Then, we can write
\begin{align}\label{lmm_duk2}
\frac{\partial}{\partial t}u^{(k)}_n(t,x)=\sum_{n'=0}^{n-1}\sum_{(\alpha,\beta,\tau)\in \cJ_{n,n'}}\Big(I_{\alpha,\beta,\tau}^{n,k,1} \frac{\partial}{\partial t}I_{\alpha,\beta,\tau}^{n,k,2}+I_{\alpha,\beta,\tau}^{n,k,2}\frac{\partial}{\partial t} I_{\alpha,\beta,\tau}^{n,k,1}\Big)\,, 
\end{align}
where
\begin{align*}
I_{\alpha,\beta,\tau}^{n,k,1}=I_{\alpha,\beta,\tau}^{n,0,1}=&\prod_{i=1}^n\Big(\int_{\RR}dz p_{t}(x-z)X_0(z)\Big)^{1-\alpha_i}
\end{align*}
and 
\begin{align*}
I_{\alpha,\beta,\tau}^{n,k,2}= &\int_{\widehat{\mathbb{T}}_{n'}^t}d\br_{n'}\int_{\RR^{n'}}d\bz_{n'}\prod_{i=1}^{n'}\Big(\int_{\RR}dz p_{t-r_i}(z_i-z)X_0(z)\Big)^{1-\beta_i} \nonumber\\
&\times\prod_{i=1}^{|\alpha|}p(r_{\tau(i)},x-z_{\tau(i)})\prod_{i=|\alpha|+1}^{2n'}p(r_{r_{\tau(i)}-\iota_{\beta}(i-|\alpha|)}, z_{\tau(i)}-z_{\iota_{\beta}(i-|\alpha|)})\\
&\times \prod_{i=1}^{n'}f(u^{(k-1)}(t-r_i,z_i)).
\end{align*}
By \eqref{for_piabg1} and Lemma \ref{lmm_gau}, we get that,
\begin{align}\label{lmm_duk11}
\sum_{n'=0}^{n-1}\sum_{(\alpha,\beta,\tau)\in \cJ_{n,n'}}\frac{\partial}{\partial t} I_{\alpha,\beta,\tau}^{n,k,1}I_{\alpha,\beta,\tau}^{n,k,2}\leq C_{N,T}
\end{align}
for some constant $C_{N,T}$ independent of $k$. On the other hand, fixing $(\alpha,\beta,\tau)\in \cJ_{n,n'}$, we see that
\begin{align}\label{for_piabtk}
\frac{\partial}{\partial t}I_{\alpha,\beta,\tau}^{n,k,2}= & J_1'+J_2'+J_3'\,, 
\end{align}
where
\begin{align*}
J_1'=J_1'(k)=&\int_{\widehat{\mathbb{T}}_{n'-1}^t}d\br_{n'-1}\int_{\RR^{n'}}d\bz_{n'}\prod_{i=1}^{n'}\Big(\int_{\RR}dz p_{t-r_i}(z_i-z)X_0(z)\Big)^{1-\beta_i} \nonumber\\
&\times \prod_{i=1}^{|\alpha|}p(r_{\tau(i)}, x-z_{\tau(i)}) \prod_{i=|\alpha|+1}^{2n'}p(r_{\tau(i)}-r_{\iota_{\beta}(i-|\alpha|)}, z_{\tau(i)}-z_{\iota_{\beta}(i-|\alpha|)})\\
&\times \prod_{i=1}^{n'}f(u^{(k-1)}(t-r_i,z_i))\bigg|_{r_{n'}=t},
\end{align*}
\begin{align*}
J_2'=J_2'(k)=&\int_{\widehat{\mathbb{T}}_{n'}^t}d\br_{n'}\int_{\RR^{n'}}d\bz_{n'}\frac{\partial}{\partial t}\bigg(\prod_{i=1}^{n'}\Big(\int_{\RR}dz p_{t-r_i}(z_i-z)X_0(z)\Big)^{1-\beta_i}\bigg)  \nonumber\\
&\times \prod_{i=1}^{|\alpha|}p(r_{\tau(i)},x-z_{\tau(i)}) \prod_{i=|\alpha|+1}^{2n'}p(r_{\tau(i)}-r_{\iota_{\beta}(i-|\alpha|)}, z_{\tau(i)}-z_{\iota_{\beta}(i-|\alpha|)})\\
&\times  \prod_{i=1}^{n'}f(u^{(k-1)}(t-r_i,z_i)),
\end{align*}
and
\begin{align*}
J_3'=J_3'(k)=&\int_{\widehat{\mathbb{T}}_{n'}^t}d\br_{n'}\int_{\RR^{n'}}d\bz_{n'}\prod_{i=1}^{n'}\Big(\int_{\RR}dz p_{t-r_i}(z_i-z)X_0(z)\Big)^{1-\beta_i} \prod_{i=1}^{|\alpha|}p(r_{\tau(i)},x-z_{\tau(i)})\nonumber\\
&\times\prod_{i=|\alpha|+1}^{2n'}p(r_{\tau(i)}- r_{\iota_{\beta}(i-|\alpha|)}, z_{\tau(i)}-z_{\iota_{\beta}(i-|\alpha|)})\frac{\partial}{\partial t}\Big(\prod_{i=1}^{n'}f(u^{(k-1)}(t-r_i,z_i))\Big).
\end{align*}
Notice that $J_1'$ and $J_2'$ are almost the same as $J_1$ and $J_2$ in Step 1, while  different terms $f(u^{(k-1)}(t-r_i,z_i))$, $i=1,\dots, n'$, can be simply bounded by $\|f\|_{\infty}$. Thus, we can write
\begin{align}\label{inq_j12'}
|J_1'|\leq C_{N,T}t^{-\frac{1}{2}}\quad \mathrm{and}\quad |J_2'|\leq C_{N,T}.
\end{align}
Furthermore, using Lemma \ref{lmm_gau} again, we can deduce that 
\begin{align}\label{inq_j3'}
|J_3'|\leq C_{N,T}\int_0^tds(t-s)^{-\frac{1}{2}}\sum_{l=1}^N\sup_{x\in \RR}\Big|\frac{\partial}{\partial s}u^{(k-1)}_l(s,x)\Big|.
\end{align}
As a result, combining \eqref{lmm_duk2}-\eqref{inq_j3'}, we have
\begin{align*}
\Big|\frac{\partial}{\partial t}u^{(k)}_n(t,x)\Big|\leq C_{N,T}\bigg(t^{-\frac{1}{2}}+\int_0^tds(t-s)^{-\frac{1}{2}}\sum_{l=1}^N\sup_{x\in \RR}\Big|\frac{\partial}{\partial s}u^{(k-1)}_l(s,x)\Big|\bigg).
\end{align*}
Using the induction hypothesis, we have
\begin{align*}
\sum_{n=1}^N\sup_{x\in \RR}\Big|\frac{\partial}{\partial t}u^{(k)}_n(t,x)\Big|\leq &NC_{N,T}\bigg(t^{-\frac{1}{2}}+\sum_{i=0}^{k-1}\frac{C_0^{i+1}}{\Gamma(\frac{1}{2}(i+1))}\int_{0}^tds(t-s)^{-\frac{1}{2}}s^{\frac{1}{2}(i-1)}\Big)\\
=&NC_{N,T}t^{-\frac{1}{2}}+NC_{N,T}\sum_{i=0}^{k-1}\frac{C_0^{i+1}\Gamma(\frac{1}{2})}{\Gamma(\frac{1}{2}i+1)}t^{\frac{1}{2}i}\leq \sum_{i=0}^{k}\frac{C_0^{i+1}}{\Gamma(\frac{1}{2}(i+1))}t^{\frac{1}{2}(i-1)},
\end{align*}
provided that $C_0\geq \sqrt{\pi}NC_{N,T}$. This proves inequality \eqref{lmm_duk1}. The proof of this lemma is complete.
\end{proof}

\begin{lemma}\label{lmm_udu}
Assume conditions in Lemma \ref{lmm_duk}. Then, 
\begin{enumerate}[(i)]
\item For any $s>0$, $\frac{\partial}{\partial t}u^{(k)}(t,x)$ is convergent uniformly on $[s,T]\times \RR$ as $k\to \infty$. 

\item $\frac{\partial}{\partial x}u^{(k)}(t,x)$ is convergent uniformly on $[0,T]\times \RR$ as $k\to \infty$.
\end{enumerate} 
\end{lemma}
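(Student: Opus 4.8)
The plan is to upgrade the convergence $u^{(k)}\to u$ in $C_b([0,T]\times\RR;\RR_+^N)$ furnished by Lemma \ref{lmm_cvuk} and Proposition \ref{prop_pcm} to convergence of the derivatives, by running a fractional Gr\"onwall estimate on the \emph{difference} of the differentiated moment formulas \eqref{def_umk}. Two ingredients are recorded first. By Proposition \ref{lmm_bunif} (which applies verbatim to every iterate $u^{(k)}$), all vectors $u^{(k)}(t,x)$ lie in one fixed compact $K\subset\RR_+^N$; hence $f$ and its derivatives $\partial_{y_l}f$ are Lipschitz, resp.\ uniformly continuous, on $K$, so that $b_k:=\|u^{(k-1)}-u^{(k-2)}\|_\infty\to 0$ at the summable rate of \eqref{ine_pchk} and $a_k:=\sup_l\sup_{(t,x)}|(\partial_{y_l}f)(u^{(k-1)}(t,x))-(\partial_{y_l}f)(u^{(k-2)}(t,x))|\to 0$. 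Moreover, the proof of Lemma \ref{lmm_duk} supplies a $k$-independent majorant: $\sum_{n=1}^N\sup_x|\partial_t u^{(k)}_n(t,x)|\le\psi(t)$ with $\psi\in L^1([0,T])$ a Mittag--Leffler-type function carrying a $t^{-1/2}$ singularity at $0$ (see \eqref{lmm_duk1}), together with the analogous bound $\sum_{n=1}^N\sup_x|\partial_x u^{(k)}_n(t,x)|\le C$, which is uniform on all of $[0,T]\times\RR$.

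For (i), set $g^{(k)}(t)=\sum_{n=1}^N\sup_x|\partial_t u^{(k)}_n(t,x)-\partial_t u^{(k-1)}_n(t,x)|$. Differentiate \eqref{def_umk} at levels $k$ and $k-1$, subtract, and organize every summand via the decomposition \eqref{for_piabtk} into the analogues of $J_1',J_2',J_3'$. In $J_1'$ (the value at $r_{n'}=t$) and $J_2'$ (derivative on the heat-kernel/initial-data block) the level dependence enters only through factors $f(u^{(k-1)}(t-r_i,z_i))$, the boundary factor $f(u^{(k-1)}(0,\cdot))$ being level-independent, so by Lipschitz continuity of $f$ and Lemma \ref{lmm_gau} these contribute at most $Cb_k$. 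In $J_3'$ one expands $\partial_t\prod_i f(u^{(k-1)}(t-r_i,z_i))$ by the product and chain rules; the level-$k$ minus level-$(k-1)$ difference then splits into (a) a factor $\prod_{i\ne m}f(u^{(k-1)})-\prod_{i\ne m}f(u^{(k-2)})$, bounded via $b_k$; (b) a factor $(\partial_{y_l}f)(u^{(k-1)})-(\partial_{y_l}f)(u^{(k-2)})$ times $\partial_t u^{(k-2)}_l$, bounded via $a_k$ and the majorant $\psi$; and (c) the feedback term $(\partial_{y_l}f)(u^{(k-1)})\,[\partial_t u^{(k-1)}_l-\partial_t u^{(k-2)}_l]$. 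Applying Lemma \ref{lmm_gau} (and Remark \ref{rmk_gau} for the $\Delta X_0$ factors) to the space--time integrals, every kernel collapses to at worst $(t-s)^{-1/2}$, and after summing the combinatorial weights of Lemma \ref{lmm_ncj} one obtains
\begin{align*}
g^{(k)}(t)\le C\,(a_k+b_k)+C\int_0^t(t-s)^{-1/2}\,g^{(k-1)}(s)\,ds .
\end{align*}
Since $g^{(k)}\le 2\psi$ uniformly in $k$, iterating this inequality and using that the Volterra operator $h\mapsto C\int_0^{\cdot}(\cdot-s)^{-1/2}h(s)\,ds$ has iterates whose sup-norm on compact time intervals tends to $0$ (the Mittag--Leffler estimate already invoked in Lemmas \ref{lmm_cvuk} and \ref{lmm_duk}) yields $\sup_{t\in[s,T]}g^{(k)}(t)\to 0$ for every $s>0$. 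Hence $\partial_t u^{(k)}$ is uniformly Cauchy, so uniformly convergent, on $[s,T]\times\RR$, and comparison with $u^{(k)}\to u$ identifies the limit as $\partial_t u$.

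For (ii), set $r^{(k)}(t)=\sum_{n=1}^N\sup_x|\partial_x u^{(k)}_n(t,x)-\partial_x u^{(k-1)}_n(t,x)|$ and rerun the scheme, now differentiating \eqref{def_umk} in $x$. The derivative falls only on the kernels $p(t-s_{\tau(i)},x-z_{\tau(i)})$ (and, through an integration by parts, on the initial-data factor, producing $\nabla X_0$, bounded since $X_0\in C_b^2(\RR)$), never directly on the level-dependent factors $f(u^{(k-1)}(s_i,z_i))$; a single further integration by parts in the relevant $z$-variable tames the $\nabla p$, moving a derivative onto $\int_\RR p_{s_i}(z_i-\cdot)X_0$ (bounded) or onto $f(u^{(k-1)}(s_i,z_i))$ (giving $\partial_x u^{(k-1)}_l$). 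Lemma \ref{lmm_gau} then delivers
\begin{align*}
r^{(k)}(t)\le C\,b_k+C\int_0^t(t-s)^{-1/2}\,r^{(k-1)}(s)\,ds ,
\end{align*}
with no $J_1'$-type boundary term because $\partial_x$ does not touch the endpoint $r_{n'}=t$. Since here the majorant of $\sum_n\sup_x|\partial_x u^{(k)}_n|$ is bounded on all of $[0,T]$, the same iteration gives $\sup_{t\in[0,T]}r^{(k)}(t)\to 0$, i.e.\ uniform convergence of $\partial_x u^{(k)}$ on $[0,T]\times\RR$, with limit $\partial_x u$.

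The main obstacle is the feedback term (c) in (i): it forces the fractional Gr\"onwall loop, whose closure rests on the $k$-uniform, $t^{-1/2}$-singular majorant $\psi$ from Lemma \ref{lmm_duk} --- which is exactly why (i) is stated on $[s,T]$ rather than $[0,T]$ --- and, for the coefficient gap $a_k$, on the uniform continuity of $\nabla_y f$ on the compact moment range of Proposition \ref{lmm_bunif}; were $\nabla_y f$ only bounded, one would pass the $a_k$-gain through dominated convergence against $\psi$ instead. Everything else is a bookkeeping re-run of the estimates already carried out for Lemmas \ref{lmm_duk} and \ref{lmm_gau}.
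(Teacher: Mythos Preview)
Your overall strategy is the same as the paper's: both derive a fractional-Volterra recursion for the consecutive differences $g^{(k)}(t)=\sum_{n}\sup_x|\partial_t u^{(k)}_n-\partial_t u^{(k-1)}_n|$ by subtracting the differentiated moment formulas, splitting via the $J_1',J_2',J_3'$ decomposition of Lemma \ref{lmm_duk}, and feeding the result into a Mittag--Leffler iteration. Your separation of the $J_3'$-difference into pieces (a), (b), (c) is in fact more careful than the paper's ``by elementary calculus'' bound, which telescopes so as to absorb your (b)-piece into the term $\|f\|_{1,\infty}^{n'}\,|u^{(k)}-u^{(k-1)}|\cdot|\partial_t u^{(k)}|$.

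There is, however, a genuine gap at the end. From the recursion you conclude $\sup_{t\in[s,T]}g^{(k)}(t)\to 0$ and then write ``Hence $\partial_t u^{(k)}$ is uniformly Cauchy''. This implication is false in general: $g^{(k)}\to 0$ only says consecutive terms are close, which does not force Cauchy-ness (take $\partial_t u^{(k)}\equiv\log k$). What is needed, and what the paper actually proves, is the \emph{summability} $\sum_{k}\sup_{[s,T]}g^{(k)}<\infty$. The paper achieves this because its forcing term is $\epsilon_k\sim c^k/\Gamma(\tfrac{1}{2}k+1)$ (coming from \eqref{ine_pchk}), so that iterating the recursion yields $g^{(k)}\le c_1 c_2^k(1+t^{-1/2})/\Gamma(\tfrac{1}{2}k+1)$, which is summable by the Mittag--Leffler bound. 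For your version of the argument to close in the same way you need $a_k+b_k$ summable; $b_k$ already is, but mere uniform continuity of $\nabla_y f$ on the compact range $K$ gives only $a_k\to 0$, not a rate. You must either (i) upgrade to $\nabla_y f$ Lipschitz (or H\"older) on $K$, so that $a_k\lesssim b_k$ and summability follows, or (ii) telescope as the paper does so that the (b)-piece is controlled by $\|f\|_{1,\infty}\,b_k\cdot\psi(s)$ rather than by $a_k\cdot\psi(s)$, after which the iteration produces the same summable bound. The same remark applies verbatim to part (ii) with $r^{(k)}$ in place of $g^{(k)}$.
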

\begin{proof}
We only show property (i). Property (ii) can be proved in a similar way. Recalling formula \eqref{lmm_duk2}, and noticing that $I_{\alpha,\beta,\tau}^{n,k,1}$ is invariant in $k$, we can write
\begin{align}\label{lmm_udu1}
\Big|\frac{\partial}{\partial t}\big(u^{(k+1)}_n(t,x)-u^{(k)}_n(t,x)\big)\Big|=\bigg|\sum_{n'=0}^{n-1}\sum_{(\alpha,\beta,\tau)\in \cJ_{n,n'}}\big(G_{\alpha,\beta,\tau}^{n,1}(k)+G_{\alpha,\beta,\tau}^{n,2}(k)\big)\bigg|,
\end{align}
where
\[
G_{\alpha,\beta,\tau}^{n,1}(k)=I_{\alpha,\beta,\tau}^{n,k,1} \frac{\partial}{\partial t}\big(I_{\alpha,\beta,\tau}^{n,k+1,2}-I_{\alpha,\beta,\tau}^{n,k,2}\big)
\]
and
\[
G_{\alpha,\beta,\tau}^{n,2}(k)=\big(I_{\alpha,\beta,\tau}^{n,k+1,2}-I_{\alpha,\beta,\tau}^{n,k,2}\big)\frac{\partial}{\partial t} I_{\alpha,\beta,\tau}^{n,k,1}.
\]
Due to Lemmas \ref{lmm_gau} and \ref{lmm_cvuk}, we can show that
\begin{align}
\sum_{n'=0}^{n-1}\sum_{(\alpha,\beta,\tau)\in \cJ_{n,n'}}\sup_{t\in [0,T]}\sup_{x\in\RR}\big|G_{\alpha,\beta,\tau}^{n,2}(k)\big|<c_1\frac{c_2^k}{\Gamma(\frac{1}{2}k+1)}.
\end{align}
On the other hand, using inequalities \eqref{for_piabtk} and \eqref{inq_j12'}, we get
\begin{align}\label{inq_gabt}
G_{\alpha,\beta,\tau}^{n,2}(k)\leq &C_{N,T}\Big[t^{-\frac{1}{2}}\sup_{(t,x)\in [0,T]\times\RR}|u^{(k)}(t,x)-u^{(k-1)}(t,x)|+H(n,k)\Big],
\end{align}
where
\begin{align*}
H(n,k)=&\int_{\widehat{\mathbb{T}}_{n'}^t}d\br_{n'}\int_{\RR^{n'}}d\bz_{n'}\prod_{i=1}^{n'}\Big(\int_{\RR}dz p_{t-r_i}(z_i-z)X_0(z)\Big)^{1-\beta_i}\prod_{i=1}^{|\alpha|}p(r_{\tau(i)},x-z_{\tau(i)})\\
&\ \times\prod_{i=|\alpha|+1}^{2n'}p(r_{\tau(i)}- r_{\iota_{\beta}(i-|\alpha|)}, z_{\tau(i)}-z_{\iota_{\beta}(i-|\alpha|)})\\
&\times \bigg|\frac{\partial}{\partial t}\Big(\prod_{i=1}^{n'}f(u^{(k)}(t-r_i,z_i))-\prod_{i=1}^{n'}f(u^{(k-1)}(t-r_i,z_i))\Big)\bigg|.
\end{align*}
By elementary calculus, we can show that
\begin{align*}
&\bigg|\frac{\partial}{\partial t}\Big(\prod_{i=1}^{n'}f(u^{(k)}(t-r_i,z_i))-\prod_{i=1}^{n'}f(u^{(k-1)}(t-r_i,z_i))\Big)\bigg|\\
\leq &\|f\|_{1,\infty}^{n'}\bigg(\sum_{l=1}^N\sup_{(r,x)\in [0,T]\times \RR}\big|u_l^{(k)}(r,x)-u_l^{(k-1)}(r,x)\big|\sum_{i=1}^{n'}\sum_{l=1}^N\sup_{x\in \RR}\Big|\frac{\partial}{\partial r}u^{(k)}_l(r_i,x)\Big|\\
&\qquad+\sum_{i=1}^{n'}\sum_{l=1}^{N}\sup_{x\in \RR}\Big|\frac{\partial}{\partial t}u_l^{(k)}(r_i,x)-\frac{\partial}{\partial t}u_l^{(k-1)}(r_i,x)\Big|\bigg).
\end{align*}
By inequality \eqref{lmm_duk1} and the asymptotic bound of Mittag-Leffler function, we know that for all $k=0,1,\dots$,
\[
\sup_{k\geq 0}\sum_{l=1}^N\sup_{x\in \RR}\Big|\frac{\partial}{\partial t}u^{(k)}_l(t,x)\Big|\leq c_1t^{-\frac{1}{2}}e^{c_2t}\leq C_{N,T}t^{-\frac{1}{2}}.
\]
Moreover, it follows from inequality \eqref{ine_pchk} that 
\begin{align*}
\sum_{n=1}^N\sup_{r\in [0,T]}\sup_{x\in \RR}|u_n^{(k)}(r,x)-u_n^{(k-1)}(r,x)|<c_1\frac{c_2^k}{\Gamma(\frac{1}{2}k+1)}.
\end{align*}
Therefore, using Lemma \ref{lmm_gau}
\begin{align}\label{lmm_udu2}
\sum_{n=1}^NH(n,k)\leq &c_1\Big(\frac{c_2^k}{\Gamma(\frac{1}{2}k+1)}\int_0^tdr(t-r)^{-\frac{1}{2}}r^{-\frac{1}{2}}\nonumber\\
&+\int_0^tdr(t-r)^{-\frac{1}{2}}\sum_{l=1}^{N}\sup_{x\in \RR}\frac{\partial}{\partial r}\big|u_l^{(k)}(r,x)-u_l^{(k-1)}(r,x)\big|\Big)\nonumber\\
\leq &\frac{c_1c_2^k}{\Gamma(\frac{1}{2}k+1)}+c_1\int_0^tdr(t-r)^{-\frac{1}{2}}\sum_{l=1}^{N}\sup_{x\in \RR}\frac{\partial}{\partial r}\big|u_l^{(k)}(r,x)-u_l^{(k-1)}(r,x)\big|.
\end{align}
Combining inequalities \eqref{lmm_udu1}-\eqref{lmm_udu2}, we can write
\begin{align*}
&\sum_{n=1}^{N}\sup_{x\in \RR}\Big|\frac{\partial}{\partial t}\big(u^{(k+1)}_n(t,x)-u^{(k)}_n(t,x)\big)\Big|\\
\leq &\frac{c_1c_2^kt^{-\frac{1}{2}}}{\Gamma(\frac{1}{2}k+1)}+c_1\int_0^tdr(t-r)^{-\frac{1}{2}}\sum_{n=1}^{N}\sup_{x\in \RR}\Big|\frac{\partial}{\partial r}\big(u_n^{(k)}(r,x)-u_n^{(k-1)}(r,x)\big)\Big|.
\end{align*}
Since 
\[
\Big|\frac{\partial}{\partial t}\big(u^{(1)}_n(t,x)-u^{(0)}_n(t,x)\big)\Big|\leq \Big|\frac{\partial}{\partial t}u^{(1)}_n(t,x)\Big|+\Big|\frac{\partial}{\partial t}u^{(0)}_n(t,x)\Big|
\]
is bounded uniformly in $(t,x)\in [s,T]\times \RR$,  we can deduce by iteration that
\begin{align*}
&\sum_{n=1}^{N}\sup_{x\in \RR}\Big|\frac{\partial}{\partial t}\big(u^{(k+1)}_n(t,x)-u^{(k)}_n(t,x)\big)\Big|\\
\leq &\frac{c_1c_2^kt^{-\frac{1}{2}}}{\Gamma(\frac{1}{2}k+1)}+\sum_{i=1}^{k-1}\frac{c_1^{i+1}c_2^{k-i}}{\Gamma(\frac{1}{2}(k-i)+1)}\int_{\widehat{\mathbb{T}}_i^t}d\br_i (t-r_1)^{-\frac{1}{2}}(r_1-r_2)^{-\frac{1}{2}}\cdots r_i^{-\frac{1}{2}}\\
\leq &c_1\frac{c_2^k}{\Gamma(\frac{1}{2}k+1)}t^{-\frac{1}{2}}+\sum_{i=1}^{k-1}\frac{c_1^{i+1}c_2^{k-i}}{\Gamma(\frac{1}{2}(k-i)+1)\Gamma(\frac{1}{2}i+1)}.
\end{align*}
By Stirling's formula, one can show that for all  $i,k\in\NN$ such that $1\leq i\leq k$,
\begin{align*}
\frac{\Gamma(\frac{1}{2}k+1)}{\Gamma(\frac{1}{2}(k-i)+1)\Gamma(\frac{1}{2}i+1)}\leq C^k
\end{align*}
for some universal constant $C$. Therefore, we can write
\begin{align*}
\sum_{n=1}^{N}\sup_{x\in \RR}\Big|\frac{\partial}{\partial t}\big(u^{(k+1)}_n(t,x)-u^{(k)}_n(t,x)\big)\Big|\leq c_1\frac{c_2^k}{\Gamma(\frac{1}{2}k+1)}\big(1+t^{-\frac{1}{2}}\big).
\end{align*}
Therefore, it follows from the asymptotic bound of Mittag-Leffler function that
\[
\sum_{k=1}^{\infty}\sum_{n=1}^{N}\sup_{x\in \RR}\Big|\frac{\partial}{\partial t}\big(u^{(k+1)}_n(t,x)-u^{(k)}_n(t,x)\big)\Big|\leq c_1e^{c_2} \big(1+t^{-\frac{1}{2}}\big).
\]
This proves that $\frac{\partial}{\partial t}u^{(k)}(t,x)$ is convergent uniformly on $[s,T]\times \RR$ for every $s\in (0,T]$. The proof of this lemma is complete.
\end{proof}
Combining Lemmas \ref{lmm_cvuk} and \ref{lmm_udu}, we get immediately the following theorem.
\begin{theorem}\label{thm_rgl}
Let $X_0\in H_{2,2}(\RR)\cap C_b^2(\RR)$. Assume Hypothesis \ref{hyp_fn} and assume that $\sigma(t,x,\mu)=\sigma(\mu)$ depends only on $\mu$. Let $X$ be a solution to \eqref{sbm} with initial condition $X_0$. Then, $\EE [X_t(x)^n]$ is differentiable at every $(t,x)\in (0,T]\times \RR$ for all $n\in \NN$.
\end{theorem}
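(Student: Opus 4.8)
The plan is to read the theorem off the Picard approximation of Section~\ref{ssec_pic}. By Lemma~\ref{lmm_cvuk} the iterates $u^{(k)}$ converge uniformly on $[0,T]\times\RR$ to a limit $u$, and by Proposition~\ref{prop_pcm} (specialized to $n\le N$, where $u_n^{(k)}(t,x)=\EE[(X_t^{(k)}(x))^n]$) we have $u_n(t,x)=\EE[X_t(x)^n]$ for $n=1,\dots,N$. Each $u^{(k)}$ is differentiable in $t$ on $(0,T]\times\RR$ and in $x$ on $[0,T]\times\RR$ by Lemma~\ref{lmm_duk}, and by Lemma~\ref{lmm_udu} the sequences $\partial_t u^{(k)}$ and $\partial_x u^{(k)}$ converge \emph{uniformly} --- on $[s,T]\times\RR$ for each $s>0$, respectively on $[0,T]\times\RR$. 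First I would invoke the classical theorem on term-by-term differentiation (a pointwise-convergent sequence whose derivatives converge uniformly has a differentiable limit, with the limit of the derivatives as its derivative) to conclude that $u_n=\EE[X_t(x)^n]$ is differentiable on $(0,T]\times\RR$ for $n=1,\dots,N$, with $\partial_t u_n=\lim_k\partial_t u_n^{(k)}$ and $\partial_x u_n=\lim_k\partial_x u_n^{(k)}$.

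For $n>N$ I would argue directly from the moment formula \eqref{for_mmt0}. Once $u_1,\dots,u_N\in C^1((0,T]\times\RR)$ and $f$ is $C^1$ with bounded derivatives (Hypothesis~\ref{hyp_fn}), the coefficient $\widehat\sigma(t,x)^2=f(u_1(t,x),\dots,u_N(t,x))$ is $C^1$ on $(0,T]\times\RR$. The obstruction to differentiating \eqref{for_mmt0} in $t$ is that the kernels $p(t-s_{\tau(i)},x-z_{\tau(i)})$ blow up as $s_{\tau(i)}\uparrow t$; to sidestep it I would perform the substitution $r_i=t-s_i$ exactly as in the proof of Lemma~\ref{lmm_duk}, after which these singular kernels carry no $t$-dependence and the $t$-dependence sits only in the benign factors $\int_{\RR}p_{t-r_i}(z_i-z)X_0(z)\,dz$ (differentiable since $X_0\in C_b^2(\RR)$, yielding a $\frac12\Delta X_0$) and $f(u(t-r_i,z_i))$ (differentiable since $u$ is $C^1$). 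Differentiating generates a boundary term at $r_{n'}=t$ and two interior terms; each is integrable uniformly in $(t,x)$ by the Gaussian bound of Lemma~\ref{lmm_gau} together with the estimate $\sup_k\sum_{l=1}^N\sup_x|\partial_t u_l^{(k)}|\le C_{N,T}\,t^{-1/2}$ already recorded, so that differentiation under the integral is legitimate and $\partial_t\EE[X_t(x)^n]$ is finite at every $(t,x)\in(0,T]\times\RR$. The spatial derivative requires no change of variables, since $p(t-s_{\tau(i)},\cdot)$ is smooth in $x$, and is estimated the same way.

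The main obstacle is this last step: justifying the interchange of differentiation and integration in \eqref{for_mmt0} after the substitution $r_i=t-s_i$, i.e.\ showing the interior term produced by letting $\partial_t$ fall on $f(u(t-r_i,z_i))$ is integrable near both $r=0$ and $r=t$, uniformly in $(t,x)$ and with constants that do not grow so fast in $n$ as to overwhelm the sum over $(\alpha,\beta,\tau)\in\cJ_{n,n'}$ controlled by Lemmas~\ref{lmm_ncj} and~\ref{lmm_gau} and Stirling's formula. This is exactly the bookkeeping carried out for the terms $J_1',J_2',J_3'$ in the proof of Lemma~\ref{lmm_duk}, and for $n>N$ it amounts to repeating that computation with the fixed coefficient $\widehat\sigma$ in place of the iterate-dependent $f(u^{(k-1)})$; the rest is the routine combination of uniform convergence and term-by-term differentiation from the first paragraph.
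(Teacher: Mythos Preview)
Your proposal is correct and follows essentially the same approach as the paper: for $n\le N$ you combine Lemmas~\ref{lmm_cvuk} and~\ref{lmm_udu} via the standard term-by-term differentiation theorem, and for $n>N$ you invoke the moment formula \eqref{for_mmt0} with the substitution $r_i=t-s_i$ to reduce the time differentiability to that of $u_1,\dots,u_N$. The paper's own proof is considerably terser---it simply cites the two lemmas and sketches the reduction for $n>N$ in one line---while you spell out the $J_1',J_2',J_3'$ bookkeeping and the integrability checks more explicitly, but there is no genuine difference in strategy.
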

\begin{proof}
Due to Lemmas \ref{lmm_cvuk} and \ref{lmm_udu}, we know that $\EE [X_t(x)^n]$ is differentiable for all $(t,x)\in (0,T]\times \RR$ and $n=1,\dots, N$. If $n> N$, then we apply Theorem \ref{coro_mnt}, and perform a changing 
of variable $u=t-s$. Then, the time differentiability of $\EE (X_t(x)^n)$ reduces to that of $\EE[(X_{t-u}(x))^n]$, $n=1,\dots, N$. This is already known. Thus we complete the proof of time regularity. The spatial regularity can be treated in a similar way. The proof of this theorem is complete.
\end{proof}


\begin{thebibliography}{4}
%%
\bibitem{amo-11-andersson-djehiche}
{\sc Andersson, D., and Djehiche, B.}
 A maximum principle for {SDEs} of mean-field type.
 {\em Appl. Math. Optim. 63}, \textbf{3} (2011), 341--356.


 \bibitem{springer-15-anderson-kurtz}
{\sc Anderson, D. F., and Kurtz, T. G.}
 {\em Stochastic analysis of biochemical systems}. Springer, Berlin, 2015.

\bibitem{ejp-10-barton-etheridge-veber}
{\sc Barton, N, Etheridge, A, and V\'{e}ber, A.}
A new model for evolution in a spatial continuum.
 {\em Electron. J. Probab. 15}, (2010), 162--216.


\bibitem{ap-94-blount}
{\sc Blount, D.}
Density-dependent limits for a nonlinear reaction-diffusion model.
 {\em Ann. Probab. 22}, \textbf{4} (1994), 2040--2070.



\bibitem{ap-09-buckdahn-djehiche-li-peng}
{\sc Buckdahn, R., Djehiche, B., Li, J., and Peng, S.}
 Mean-field backward stochastic differential equations: a limit
  approach.
 {\em Ann. Probab. 37}, \textbf{4} (2009), 1524--1565.

\bibitem{ap-17-buckdahn-li-peng-rainer}
{\sc Buckdahn, R., Li, J., Peng, S., and Rainer, C.}
 Mean-field stochastic differential equations and associated {PDEs}.
  {\em Ann. Probab. 45}, \textbf{2} (2017), 824--878.

\bibitem{lnm-93-dawson}
{\sc Dawson, D.~A.}
  Measure-valued {Markov} processes.
  In {\em {\'E}cole d'Et{\'e} de Probabilit{\'e}s de Saint-Flour
  XXI-1991}, vol.~1541 of {\em Lecture notes in Mathematics}. Springer-Verlag,
  Berlin, Heidelberg, 1993, pp.~1--260.

\bibitem{ap-dawson-hochberg}
  {\sc Dawson, D. A., and Hochberg, K. J.} The carrying dimension of a stochastic measure diffusion. {\em Ann. Probab.}, {\bf 7}, no. 4, (1979), 693--703

\bibitem{springer-82-dawson-kurtz}
{\sc Dawson, D.~A., and Kurtz, T.~G.}
  Applications of duality to measure-valued diffusion processes.
  In {\em Advances in filtering and optimal stochastic control}.
  Springer-Verlag, Berlin, Heidelberg, 1982, pp.~91--105.

\bibitem{nodea-95-dawson-vaillancourt}
{\sc Dawson, D.~A., and Vaillancourt, J.}
  Stochastic {McKean-Vlasov} equations.
  {\em NoDEA: Nonlinear Differ. Equ. and Appl. 2}, \textbf{2} (1995), 199--229.

\bibitem{aihpps-00-dawson-vaillancourt-wang}
{\sc Dawson, D.~A., Vaillancourt, J., and Wang, H.}
  Stochastic partial differential equations for a class of interacting
  measure-valued diffusions.
  {\em Ann. Inst. Henri. Poincar\'{e} Probab. Stat. 36}, \textbf{2} (2000),
  167--180.

\bibitem{jota-18-dumitrescu-oksendal-sulem}
{\sc Dumitrescu, R., {\O}ksendal, B., and Sulem, A.}
  Stochastic control of general mean-field {SPDEs} with jumps.
  {\em J. Optim. Theory Appl. 176}, \textbf{3} (2018), 559--584.

\bibitem{bath-14-eckhoff}
{\sc Eckhoff, M.}
   Superprocesses and large-scale networks.
  Ph.D. thesis, University of Bath, Bath,  2014.


\bibitem{jde-03-englander-pinsky}
{\sc Engl{\"a}nder, J., and Pinsky, R.~G.}
  Uniqueness/nonuniqueness for nonnegative solutions of second-order
  parabolic equations of the form {$u_t= L u+ V u- \gamma u^p$} in
  $\mathbb{R}^n$.
  {\em J. Differential Equations 192}, \textbf{2} (2003), 396--428.




\bibitem{ejp-20-etheridge-veber-yu}
{\sc Etheridge, A. M., V\'{e}ber, A., and Yu, F.}
  Rescaling limits of the spatial Lambda-Fleming-Viot process with selection.
  {\em Electron. J. Probab. 25}, (2020), 1--89.




\bibitem{wiley-86-ethier-kurtz}
{\sc Ethier, S.~N., and Kurtz, T.~G.}
  {\em Markov processes: characterization and convergence}.
  John Wiley \& Sons, New York, 1986.






  \bibitem{springer-19-feinberg}
{\sc Feinberg, M.} {\em Foundations of chemical reaction network theory}, 1~ed.,
  vol.~202 of {\em Applied Mathematical Sciences}. Springer, Cham, 2019.

\bibitem{ws-16-hu}
{\sc Hu, Y.}
  {\em Analysis on {Gaussian} spaces}.
  World Scientific, Singapore, 2016.

\bibitem{ap-17-hu-huang-le-nualart-tindel}
{\sc Hu, Y., Huang, J., L{\^e}, K., Nualart, D., and Tindel, S.}
  Stochastic heat equation with rough dependence in space.
  {\em Ann. Probab. 45}, \textbf{6B} (2017), 4561--4616.


\bibitem{ejp-15-hu-huang-nualart-tindel}
{\sc Hu, Y., Huang, J., Nualart, D., and Tindel, S.}
  Stochastic heat equations with general multiplicative {Gaussian}
  noises: {H{\"o}lder} continuity and intermittency.
  {\em Electron. J. Probab.} \textbf{20} (2015).

\bibitem{spa-13-hu-le}
{\sc Hu, Y., and Le, K.}
  A multiparameter {Garsia--Rodemich--Rumsey} inequality and some
  applications.
  {\em Stochastic Process. Appl. 123}, \textbf{9} (2013), 3359--3377.


\bibitem{mg-15-jameson} 
{\sc Jameson, G.~J.}
  A simple proof of {Stirling's} formula for the gamma function.
  {\em Math. Gaz. 99}, \textbf{544}  (2015),  68--74.

\bibitem{arxiv-21-ji-xiong-yang}
{\sc Ji, L., Xiong, J., and Yang, X.}
  Well-posedness of martingale problem for {SBM} with interacting
  branching.
  {\em arXiv preprint arXiv:2104.02295\/} (2021).

\bibitem{elsevier-06-kilbas-strivastava-trujillo}
{\sc Kilbas, A.~A., Srivastava, H.~M., and Trujillo, J.~J.}
  {\em Theory and applications of fractional differential equations},
  vol.~204.
  Elsevier, Amsterdam, 2006.

\bibitem{ptrf-88-konno-shiga}
{\sc Konno, N., and Shiga, T.}
  Stochastic partial differential equations for some measure-valued
  diffusions.
  {\em Probab. Theory Related Fields 79}, \textbf{2} (1988), 201--225.

\bibitem{sa-11-kurtz}
{\sc Kurtz, T.~G.}
  Equivalence of stochastic equations and martingale problems.
  In {\em Stochastic analysis 2010}. Springer-Verlag, Berlin,
  Heidelber, 2011, pp.~113--130.

\bibitem{jjm-07-lasry-lions}
{\sc Lasry, J.-M., and Lions, P.-L.}
  Mean field games.
  {\em Japan J. Math. 2}, \textbf{1} (2007), 229--260.



\bibitem{ap-98-mytnik}
{\sc Mytnik, L.}
  Weak uniqueness for the heat equation with noise.
  {\em Ann. Probab. 26}, \textbf{3} (1998), 968--984.

\bibitem{ijm-15-mytnik-xiong}
{\sc Mytnik, L., and Xiong, J.}
  Well-posedness of the martingale problem for superprocess with interaction.
  {\em Illinois J. Math. 59}, \textbf{2} (2015), 485--497.



\bibitem{pre-94-overbeck}
 {\sc Overbeck, L.} Large deviations from the McKean-Vlasov limit for super-Brownian motion with mean-field immigration. {\rm Preprint},
 \url{https://digitalassets.lib.berkeley.edu/sdtr/ucb/text/430.pdf}

\bibitem{ap-96-overbeck}
{\sc Overbeck, L.}
  Nonlinear superprocesses.
  {\em Ann. Probab. 24}, \textbf{2} (1996), 743--760.


\bibitem{mams-95-perkins}
{\sc Perkins, E.}
  On the martingale problem for interactive measure-valued branching
  diffusions.
  {\em Mem. Amer. Math. Soc. 115}, \textbf{549} (1995), 1--89.

\bibitem{springer-02-perkins}
{\sc Perkins, E.}
  {Dawson-Watanabe} superprocesses and measure-valued diffusions.
  In {\em Ecole d'Et\'{e} de Probabilit\'{e}s de Saint-Flour XXIX -
  1999}, vol.~1781 of {\em Lecture notes in Mathematics}. Springer, Berlin,
  Heidelberg, 2002, pp.~125--329.



\bibitem{aap-15-pfaffelhuber-popovic}
{\sc Pfaffelhuber, P. and Popovic, L}
  Scaling limits of spatial compartment models for chemical reaction networks.
  {\em Ann. Appl. Probab. 25}, \textbf{6} (2015), 3162--3208.


\bibitem{arxiv-20-popovic-veber}
{\sc Popovic, L. and V\'{e}ber, A.} A spatial measure-valued model for chemical reaction networks in heterogeneous systems. To appear in {\em Ann. Appl. Probab.}, (2022+).


\bibitem{nature-01-strogatz}
{\sc Strogatz, S. H.}  Exploring complex networks. {\em Nature}, {\bf 410} no. 6825 (2001), 268--276.



\bibitem{psbs-72-stroock-varadhan}
{\sc Stroock, D.~W., and Varadhan, S.~R.}
  On the support of diffusion processes with applications to the strong
  maximum principle.
  In {\em Proceedings of the Sixth Berkeley Symposium on Mathematical
  Statistics and Probability}, vol.~3. University of California Press,
  Berkeley, 1972, pp.~333--359.

\bibitem{springer-06-stroock-varadhan}
{\sc Stroock, D.~W., and Varadhan, S.~S.}
  {\em Multidimensional diffusion processes}.
  Classics in Mathematics. Springer-Verlag, Berlin Heidelberg, 2006.



\bibitem{cam-19-tang-meng-wang}
{\sc Tang, M., Meng, Q., and Wang, M.}
  Forward and backward mean-field stochastic partial differential
  equation and optimal control.
  {\em Chin. Ann. Math., Ser. B 40}, \textbf{4} (2019), 515--540.



\bibitem{cam-16-hofstad}
{\sc Van Der Hofstad, R.}
  {\em Random graphs and complex networks}, vol. 43,
  Cambridge university press, Cambridge, 2016.


\bibitem{ptrf-97-wang}
{\sc Wang, H.}
 State classification for a class of measure-valued branching
diffusions in a {Brownian} medium.
{\em Probab. Theory Related Fields 109}, \textbf{1} (1997), 39--55.

\bibitem{saa-98-wang}
{\sc Wang, H.}
  A class of measure-valued branching diffusions in a random medium.
  {\em Stochastic Anal. Appl. 16}, \textbf{4} (1998), 753--786.

\bibitem{ap-13-xiong}
{\sc Xiong, J.}
  Super-{Brownian} motion as the unique strong solution to an {SPDE}.
  {\em Ann. Probab. 41}, \textbf{2} (2013), 1030--1054.

\bibitem{ws-13-xiong}
{\sc Xiong, J.}
  {\em Three classes of nonlinear stochastic partial differential
  equations}.
  World Scientific, Singapore, 2013.

\bibitem{jmaa-07-ye-gao-ding}
{\sc Ye, H,  Gao, J., and Ding, Y.}
  A generalized Gronwall inequality and its application to a fractional differential equation.
  {\em J. Math. Anal. Appl. 328}, \textbf{2} (2007), 1075--1081.


\end{thebibliography}
\end{document}